\newtheorem{theorem}{Theorem}[section]
\newtheorem{assumption}[theorem]{Assumption}
\newtheorem{corollary}[theorem]{Corollary}
\newtheorem{definition}[theorem]{Definition}
\newtheorem{lemma}[theorem]{Lemma}
\newtheorem{notation}[theorem]{Notation}
\newtheorem{proposition}[theorem]{Proposition}
\newtheorem{remark}[theorem]{Remark}
\newenvironment{proof}[1][Proof]{\textbf{#1.} }{\ \rule{0.5em}{0.5em}}
\numberwithin{equation}{section}
\begin{document}

\title{\textbf{Variation of constants formula and exponential dichotomy for non
autonomous non densely defined Cauchy problems}}
\author{ \textsc{Pierre Magal$^{a}$ and Ousmane Seydi$^{b}$} \\
$^{a}${\small Univ. Bordeaux, IMB, UMR 5251, F-33076 Bordeaux, France}\\
{\small CNRS, IMB, UMR 5251, F-33400 Talence, France}\\
$^{b}${\small D\'epartement Tronc Commun, \'{E}cole Polytechnique de Thi\`{e}s, S\'en\'egal}}
\maketitle

\textbf{Abstract:} {\small In this paper we prove a variation of constants formula for a non autonomous and non homogeneous Cauchy problems whenever the linear part is not densely defined and is not a Hille-Yosida operator. By using this variation of constants formula we derive a necessary and sufficient conditions for the existence of exponential dichotomy for the evolution family generated by the associated non autonomous homogeneous problem. We also prove a persistence result of the exponential dichotomy for small perturbations. Finally we illustrate our result by consider a parabolic equation with non local and non autonomous boundary conditions. } \vspace{0.3cm} 

\noindent \textbf{Keywords} : Non autonomous Cauchy problem, non densely
defined Cauchy problem, exponential dichotomy.\vspace{0.3cm}

\noindent \textbf{AMS Subject Classication} : 37B55, 47D62, 34D09

\tableofcontents

\section{Introduction}

In this article we consider the following non homogeneous non autonomous
problem 
\begin{equation}
\dfrac{du(t)}{dt}=(A+B(t))u(t)+f(t),\text{ for }t\geq t_{0},\text{ and }%
u(t_{0})=x\in \overline{D(A)},  \label{1.1}
\end{equation}%
where $t_{0}\in \mathbb{R}$, $A:D(A)\subset X\rightarrow X$ is a linear
operator (possibly with non dense domain i.e $\overline{D(A)}\varsubsetneq X$%
) on a Banach space $(X,\Vert \cdot \Vert )$, $\{B(t)\}_{t\in \mathbb{R}%
}\subset \mathcal{L}(\overline{D(A)},X)$ is a locally bounded and strongly
continuous family of bounded linear operators and $f\in L_{loc}^{1}(\mathbb{R%
},X)$.

Throughout this article, we will make the following assumption on the linear
operator $(A,D(A))$.

\begin{assumption}
\label{ASS1.1} We assume that

\begin{itemize}
\item[i)] There exist two constants $\omega\in \mathbb{R}$ and $M\geq 1$,
such that $(\omega,+\infty)\subset \rho (A)$ and 
\begin{equation*}
\left\Vert (\lambda I-A)^{-k}\right\Vert _{\mathcal{L}(\overline{D(A)})}\leq
M \left( \lambda -\omega \right) ^{-k},\;\forall \lambda >\omega,\;k\geq 1.
\end{equation*}

\item[ii)] $\lim_{\lambda \rightarrow +\infty }(\lambda I-A)^{-1}x=0,\
\forall x\in X$.
\end{itemize}
\end{assumption}

Recall that $A$ is a Hille-Yosida operator if there exist two constants $%
\omega \in \mathbb{R}$ and $M\geq 1$, such that $(\omega ,+\infty )\subset
\rho (A)$ and 
\begin{equation*}
\left\Vert (\lambda I-A)^{-k}\right\Vert _{\mathcal{L}(X)}\leq M\left(
\lambda -\omega \right) ^{-k},\forall \lambda >\omega ,\forall k\geq 1.
\end{equation*}%
In this article, we will not assume that $A$ is a Hille-Yosida operator since in Assumption \ref{ASS1.1}-\textit{i)} the operator norm is taken into $\overline{D(A)}$ instead of $X$. Set 
\begin{equation*}
X_{0}:=\overline{D(A)}
\end{equation*}%
and denote by $A_{0}$ the part of $A$ on $X_0$ that is 
\begin{equation*}
A_{0}x=Ax,\forall x\in D(A_{0})
\end{equation*}%
and 
\begin{equation*}
D(A_{0}):=\{x\in D(A):Ax\in X_{0}\}.
\end{equation*}%
Then it has been proved (see \cite[Lemma 2.1 and Lemma 2.2]{Magal-Ruan2009b}%
) Assumption \ref{ASS1.1} is equivalent to $\rho (A)\neq \varnothing $ and $%
A_{0}$ is a Hille-Yosida linear operator on $X_{0}$. Therefore $A_{0}$
generates a strongly continuous semigroup $\left\{ T_{A_{0}}(t)\right\}
_{t\geq 0}\subset \mathcal{L}(X_{0})$. An important and useful approach to
investigate such a non-densely defined Cauchy problems is to use the
integrated semigroup theory, which was first introduced by Arendt \cite%
{Arendt87-a, Arendt87-b}.\ The integrated semigroup generated by $A$, namely 
$\left\{ S_{A}(t)\right\} _{t\geq 0}$ is a strongly continuous familly of
bounded linear operator on $X,$ which commute with the resolvent of $A,$ and
such that for each $x\in X$ the map $t\rightarrow S_{A}(t)x$ is an
integrated solution of the Cauchy problem 
\begin{equation}
\frac{du}{dt}=Au(t)+x,\text{ for }t\geq 0\text{ and }u(0)=0.  \label{1.2}
\end{equation}%
Considering the Cauchy problem 
\begin{equation}
\frac{du}{dt}=Au(t)+f(t),\text{ for }t\geq 0\text{ and }u(0)=0,  \label{1.3}
\end{equation}%
with $f\in L^{1}\left( \left( 0,\tau \right) ,X\right) .\ $When $A$ is a
Hille-Yosida operator, it we was proved by Kellermann and Hieber \cite%
{Kellermann} that 
$$
t\rightarrow \left( S_{A}\ast f\right)
(t):=\int_{0}^{t}S_{A}(t-s)f(s)ds
$$ is continuously differential and the
derivative 
$$
u(t):=\frac{d}{dt}\left( S_{A}\ast f\right) (t)
$$ 
is an integrated (or mild) solution of (\ref{1.3}). That is to say 
\begin{equation*}
\int_0^tu(s)ds \in D(A), \forall t \geq 0,
\end{equation*}
and 
\begin{equation*}
u(t)=A\int_0^t u(s)ds+\int_0^t f(s)ds, \forall t \geq 0.
\end{equation*}
The uniqueness of mild solution has been proved by Thieme \cite{Thieme90-a}.

The next assumption needed to use perturbations $f(t)$ that are continuous
in time is the following.

\begin{assumption}
\label{ASS1.2} For each $\tau >0$ and each $f\in C\left( \left[ 0,\tau %
\right] ,X\right) $ we assume that there exists $u_{f}\in C\left( \left[
0,\tau \right] ,X_{0}\right) $ an integrated (or mild) solution of 
\begin{equation*}
\frac{du_{f}}{dt}=Au_{f}(t)+f(t),\text{ for }t\geq 0\text{ and }u_{f}(0)=0.
\end{equation*}
Moreover we assume that there exists a non decreasing map $\delta
:[0,+\infty) \rightarrow [0,+\infty)$ such that 
\begin{equation}  \label{1.4}
\Vert u_f(t) \Vert \leq \delta(t) \underset{s\in [0,t]}{\sup} \Vert
f(s)\Vert , \ \forall t\geq 0,
\end{equation}
with
\begin{equation*}
\delta(t)\rightarrow 0 \text{ as } t\rightarrow 0^+.
\end{equation*} 
\end{assumption}

Let $f \in C([0,+\infty),X)$ be fixed. The existence of mild solution in
Assumption \ref{ASS1.2} is equivalent to the continuous time
differentiability of the map $t\rightarrow (S_A \ast f)(t)$ from $%
[0,+\infty) $ into $X$. Moreover 
\begin{equation*}
u_f(t)=\frac{d}{dt}(S_A*f)(t), \forall t \geq 0,
\end{equation*}
whenever the mild solution exists. Define 
\begin{equation*}
(S_A \diamond f)(t) :=\frac{d}{dt}(S_A*f)(t), \forall t \geq 0.
\end{equation*}
The foregoing Assumption \ref{ASS1.2} needs justification. In fact if $A$ is
Hille-Yosida operator, then Assumption \ref{ASS1.2} holds true as long as $%
t\rightarrow f(t)$ is continuous (see Kellermann and Hieber \cite{Kellermann}%
) and we have the following estimate 
\begin{equation*}
\Vert (S_A \diamond f)(t) \Vert \leq M \displaystyle \int_0^t e^{\omega
(t-s)}\Vert f(s)\Vert ds
\end{equation*}
therefore 
\begin{equation}  \label{1.5}
\Vert (S_A \diamond f)(t) \Vert \leq \left(M \displaystyle \int_0^t
e^{\omega s} ds \right) \underset{s\in [0,t]}{\sup} \Vert f(s)\Vert , \
\forall t\geq 0,
\end{equation}
which clearly show that (\ref{1.4}) is a generalization of (\ref{1.5}).
Hence since we do not assume that $A$ is Hille-Yosida, Assumption \ref%
{ASS1.2} becomes an issue in order to obtain the existence of integrated
solutions for the non-homogeneous equation (\ref{1.1}). This question has
been studied by Magal and Ruan \cite{Magal-Ruan2007} and by Thieme \cite%
{Thieme08} in the general case, and by Ducrot, Magal and Prevost \cite{DMP}
in the almost sectorial case. Assumptions \ref{ASS1.1} and \ref{ASS1.2} are
justified by the fact that in several situations the linear operator $%
(A,D(A))$ is not Hille-Yosida while the differentiability of $t\rightarrow
(S_A\ast f)(t)$ and the estimate (\ref{1.4}) can be obtained if $%
(A_0,D(A_0)) $ is a Hille-Yosida operator.

The following assumption will be required in order to deal with the
existence of integrated solutions for the non homogeneous equation (\ref{1.1}%
).

\begin{assumption}
\label{ASS1.3} Let $\{B(t)\}_{t \in \mathbb{R}} \subset \mathcal{L}(X_0,X)$
be a family of bounded linear operators. We assume that $t \rightarrow B(t)$
is strongly continuous from $\mathbb{R}$ into $\mathcal{L}(X_0,X)$, that is
to say that for each $x \in X_0$ the map $t\rightarrow B(t)x$ is continuous
from $\mathbb{R}$ into $X$. We assume that there exists $b\in C(\mathbb{R},%
\mathbb{R}_+)$ such that 
\begin{equation*}
\Vert B(t)\Vert_{\mathcal{L}(X_0,X)} \leq b(t), \ \forall t\in \mathbb{R}.
\end{equation*}
\end{assumption}

The foregoing assumptions will allows us to obtain the existence of an \textit{%
evolution family} (see Definition \ref{DEF1.4} below) for the homogeneous
Cauchy problem (\ref{1.7}). Before proceeding let us introduce the notation 
\begin{equation*}
\Delta :=\left\{ (t,s)\in \mathbb{R}^{2}:t\geq s\right\} ,
\end{equation*}%
and recall the notion of evolution family.

\begin{definition}
\label{DEF1.4} Let $(Z,\Vert \cdot \Vert)$ be a Banach space. A two
parameters family of bounded linear operators on $Z$, $\{ U(t,s)\}_{(t,s)\in
\Delta}$ is an \textit{evolution family} if

\begin{itemize}
\item[i)] For each $t,r,s\in \mathbb{R}$ with $t\geq r\geq s$ 
\begin{equation*}
U(t,t)=I_{\mathcal{L}(Z)} \quad \text{and} \quad U(t,r)U(r,s)=U(t,s).
\end{equation*}

\item[ii)] For each $x\in Z$, the map $(t,s)\rightarrow U(t,s)x$ is
continuous from $\Delta$ into $Z$.
\end{itemize}

If in addition there exist two constants $\widehat{M}\geq 1$ and $\widehat{%
\omega}\in \mathbb{R}$ such that 
\begin{equation*}
\Vert U(t,s) \Vert_{\mathcal{L}(Z)} \leq \widehat{M} e^{\widehat{\omega}(t-s)}, \ \forall
(t,s)\in \Delta,
\end{equation*}
we say that $\{ U(t,s)\}_{(t,s)\in \Delta}$ is an exponentially bounded
evolution family.
\end{definition}

Consider the following homogeneous equation for each $t_0 \in \mathbb{R}$ 
\begin{equation}  \label{1.7}
\dfrac{du(t)}{dt}=(A+B(t))u(t), \text{ for } t\geq t_0 \text{ and } u(t_0)=x
\in X_0.
\end{equation}
By using \cite[Theorem 5.2]{Magal-Ruan2007} and \cite[Proposition 4.1]%
{Magal-Ruan2009a} we obtain the following Proposition.

\begin{proposition}
\label{PROP1.5} Let Assumptions \ref{ASS1.1}, \ref{ASS1.2} and \ref{ASS1.3}
be satisfied. Then the homogeneous Cauchy problem (\ref{1.7}) generates a
unique evolution family 
$\{U_B(t,s) \}_{(t,s)\in \Delta}\subset \mathcal{L}(X_0)$. Moreover 
$U_B(\cdot,t_0)x_0\in C([t_0,+\infty),X_0)$ is the unique solution of the fixed point problem 
\begin{equation}  \label{1.8}
U_B(t,t_0)x_0=T_{A_0}(t-t_0)x_0+\dfrac{d}{dt}\int_{t_0}^t
S_{A}(t-s)B(s)U_B(s,t_0)x_0ds, \ \forall t\geq t_0.
\end{equation}
If we assume in addition that 
\begin{equation*}
\sup_{t\in \mathbb{R}}  b(t)<+\infty
\end{equation*}
then the evolution family $\{U_B(t,s) \}_{(t,s)\in \Delta}$ is exponentially
bounded.
\end{proposition}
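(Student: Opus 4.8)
The plan is to reduce the homogeneous problem (\ref{1.7}) to the affine fixed point equation (\ref{1.8}), to solve the latter by a contraction argument on short time intervals and extend globally, and finally to transfer the semigroup-type identities of $T_{A_{0}}$ and $S_{A}\diamond$ to the solution map so as to obtain the evolution family structure. This is the route followed in \cite[Theorem 5.2]{Magal-Ruan2007} and \cite[Proposition 4.1]{Magal-Ruan2009a}, whose results we invoke; below I indicate how the pieces fit together.

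First I would fix $t_{0}\in\mathbb{R}$ and $x_{0}\in X_{0}$ and, for $\tau>t_{0}$, introduce on the Banach space $C([t_{0},\tau],X_{0})$ the map
\[
(\mathcal{F}u)(t):=T_{A_{0}}(t-t_{0})x_{0}+\frac{d}{dt}\int_{t_{0}}^{t}S_{A}(t-s)B(s)u(s)\,ds .
\]
By Assumption \ref{ASS1.3} the map $s\mapsto B(s)u(s)$ is continuous from $[t_{0},\tau]$ into $X$ whenever $u\in C([t_{0},\tau],X_{0})$ (write $B(s)u(s)-B(s')u(s')=B(s)(u(s)-u(s'))+(B(s)-B(s'))u(s')$ and use $\|B(s)\|\le b(s)$ together with strong continuity), so by Assumption \ref{ASS1.2} the convolution term is well defined, continuous in $t$ with values in $X_{0}$, and after the substitution $\sigma=s-t_{0}$ equals $(S_{A}\diamond\widetilde h)(t-t_{0})$ with $\widetilde h(\sigma)=B(\sigma+t_{0})u(\sigma+t_{0})$; hence $\mathcal{F}$ maps $C([t_{0},\tau],X_{0})$ into itself. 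For $u_{1},u_{2}$ the difference $(\mathcal{F}u_{1}-\mathcal{F}u_{2})(t)$ is again of the form $(S_{A}\diamond\cdot)(t-t_{0})$, so the estimate (\ref{1.4}) gives
\[
\|(\mathcal{F}u_{1}-\mathcal{F}u_{2})(t)\|\le \delta(t-t_{0})\,\Big(\sup_{s\in[t_{0},\tau]}b(s)\Big)\,\|u_{1}-u_{2}\|_{\infty},\qquad t\in[t_{0},\tau].
\]
Since $\delta(h)\to 0$ as $h\to 0^{+}$ and $b$ is bounded on compact intervals, one may choose a step $\eta_{0}>0$, depending only on a local bound for $b$ near $t_{0}$, so that $\mathcal{F}$ is a $\tfrac{1}{2}$-contraction on $C([t_{0},t_{0}+\eta_{0}],X_{0})$, whence a unique fixed point $u$ there; a standard estimate with the contraction property also yields $\|u\|_{\infty}\le 2Me^{|\omega|\eta_{0}}\|x_{0}\|$. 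Because $\eta_{0}$ does not depend on the size of the datum, I would restart from $t_{0}+\eta_{0}$ with datum $u(t_{0}+\eta_{0})$ and iterate; the steps cannot shrink to zero on a compact interval, so this patches to a solution $u\in C([t_{0},+\infty),X_{0})$ of (\ref{1.8}), unique by the same local uniqueness (and, at the level of mild solutions, by \cite{Thieme90-a}). Set $U_{B}(t,t_{0})x_{0}:=u(t)$; since (\ref{1.8}) is affine in $x_{0}$ and linear in $u$, uniqueness forces $x_{0}\mapsto U_{B}(\cdot,t_{0})x_{0}$ to be linear, and the bound above shows $U_{B}(t,t_{0})\in\mathcal{L}(X_{0})$ with locally bounded norm.

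Next I would verify the evolution family axioms. Evaluating (\ref{1.8}) at $t=t_{0}$ gives $U_{B}(t_{0},t_{0})x_{0}=T_{A_{0}}(0)x_{0}=x_{0}$, i.e. $U_{B}(t,t)=I$. For the cocycle identity, fix $s\le r\le t$ and $x\in X_{0}$, put $v(\ell):=U_{B}(\ell,s)x$, and use the translation identity for the integrated semigroup (underlying \cite[Theorem 5.2]{Magal-Ruan2007})
\[
\frac{d}{dt}\int_{s}^{t}S_{A}(t-\sigma)g(\sigma)\,d\sigma=T_{A_{0}}(t-r)\,\Big[\frac{d}{d\ell}\int_{s}^{\ell}S_{A}(\ell-\sigma)g(\sigma)\,d\sigma\Big]_{\ell=r}+\frac{d}{dt}\int_{r}^{t}S_{A}(t-\sigma)g(\sigma)\,d\sigma,\qquad t\ge r\ge s,
\]
applied to $g(\sigma)=B(\sigma)v(\sigma)$, together with $T_{A_{0}}(t-s)=T_{A_{0}}(t-r)T_{A_{0}}(r-s)$: this shows that $v|_{[r,+\infty)}$ solves the fixed point problem (\ref{1.8}) with initial time $r$ and datum $v(r)=U_{B}(r,s)x$, so by uniqueness $v(t)=U_{B}(t,r)U_{B}(r,s)x$, which is the desired identity. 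Joint continuity of $(t,s)\mapsto U_{B}(t,s)x$ on $\Delta$ then follows from continuity in $t$ for fixed $s$ (the right-hand side of (\ref{1.8}) is continuous in $t$), the local uniform bounds on $\|U_{B}(t,s)\|$, and the cocycle identity, which lets one control $U_{B}(t,s)x-U_{B}(t',s')x$ by a telescoping argument. This gives the evolution family $\{U_{B}(t,s)\}_{(t,s)\in\Delta}\subset\mathcal{L}(X_{0})$; any other evolution family whose orbits solve (\ref{1.8}) coincides with it by uniqueness of the fixed point.

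Finally, assume $b_{\infty}:=\sup_{t\in\mathbb{R}}b(t)<+\infty$. Then the contraction step $\eta_{0}$ can be fixed once and for all (depending only on $b_{\infty}$, $M$, $\omega$ and $\delta$), uniformly in the starting time; the contraction estimate on an interval of length $\eta_{0}$, together with $\|T_{A_{0}}(h)\|\le Me^{|\omega|\eta_{0}}$ on $[0,\eta_{0}]$ (Assumption \ref{ASS1.1}), (\ref{1.4}) and $b_{\infty}$, yields a constant $C_{0}\ge 1$ with $\|U_{B}(t,s)\|_{\mathcal{L}(X_{0})}\le C_{0}$ whenever $0\le t-s\le\eta_{0}$. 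For general $(t,s)\in\Delta$, splitting $[s,t]$ into $n=\lceil(t-s)/\eta_{0}\rceil$ subintervals of length $\le\eta_{0}$ and using the cocycle identity gives $\|U_{B}(t,s)\|\le C_{0}^{\,n}\le C_{0}\,C_{0}^{(t-s)/\eta_{0}}=\widehat M\,e^{\widehat\omega(t-s)}$ with $\widehat M:=C_{0}$ and $\widehat\omega:=\eta_{0}^{-1}\ln C_{0}\ge 0$, which is the asserted exponential bound. The main obstacle is the cocycle property: the contraction-plus-continuation scheme is routine, but the step that makes the restriction of a solution of (\ref{1.8}) again a solution of a time-shifted copy of (\ref{1.8}) relies on the translation identity for the nonstandard convolution $S_{A}\diamond$ attached to an operator $A$ that is neither densely defined nor Hille--Yosida — precisely the content furnished by \cite[Theorem 5.2]{Magal-Ruan2007} and \cite[Proposition 4.1]{Magal-Ruan2009a}.
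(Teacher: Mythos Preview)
Your proposal is correct and follows exactly the route the paper indicates: the paper does not give an independent proof but simply cites \cite[Theorem 5.2]{Magal-Ruan2007} and \cite[Proposition 4.1]{Magal-Ruan2009a}, and what you have written is precisely the contraction-plus-continuation argument, the translation identity (which is equation (\ref{2.2}) in the paper) to get the cocycle property, and the uniform-step exponential bound that underlie those references. There is no gap and no alternative approach to compare.
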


The following theorem provides an approximation formula of the solutions of equation (\ref{1.1}). This is the first main result. 

\begin{theorem}[Variation of constants formula]
\label{TH1.6} Let Assumptions \ref{ASS1.1}, \ref{ASS1.2} and \ref{ASS1.3} be
satisfied. Then for each $t_0\in \mathbb{R}$, each $x_0\in X_0$ and each $%
f\in C([t_0,+\infty],X)$ the unique integrated solution $u_f\in
C([t_0,+\infty],X_0)$ of (\ref{1.1}) is given by 
\begin{equation} \label{1.9}
u_f(t)=U_{B}(t,t_0)x_0+\underset{\lambda \rightarrow +\infty}{\lim}
\int_{t_0}^t U_B(t,s)\lambda R_\lambda(A)f(s)ds, \ \forall t\geq t_0,
\end{equation}
where the limit exists in $X_0$. Moreover the convergence in (\ref{1.9}) is
uniform with respect to $t,t_0 \in I$ for each compact interval $I \subset 
\mathbb{R}$.
\end{theorem}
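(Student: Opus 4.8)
The plan is to establish (\ref{1.9}) by a regularization argument: approximate the problem (\ref{1.1}) by a sequence of problems whose inhomogeneous term is mollified through the resolvent, solve those exactly using the evolution family, and then pass to the limit. First I would fix $t_0$, $x_0$, and $f\in C([t_0,+\infty),X)$, and for $\lambda>\omega$ set $f_\lambda(s):=\lambda R_\lambda(A)f(s)$, where $R_\lambda(A)=(\lambda I-A)^{-1}$. By Assumption \ref{ASS1.1}, $f_\lambda(s)\in D(A_0)\subset X_0$ and $f_\lambda\in C([t_0,+\infty),X_0)$, with $\|f_\lambda(s)\|\le M\lambda(\lambda-\omega)^{-1}\|f(s)\|$ bounded uniformly on compacts, and by Assumption \ref{ASS1.1}-\textit{ii)} together with a standard Hille–Yosida argument on $X_0$ one has $f_\lambda(s)\to f(s)$ in $X$ for every $s$, hence (by dominated convergence / uniform continuity of $f$ on compacts) uniformly on compact intervals.

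Next I would record that, because $f_\lambda$ takes values in $X_0$ and is continuous, the Cauchy problem with inhomogeneity $f_\lambda$ has a classical-type solution given by the ordinary variation of constants formula for the evolution family, namely
\begin{equation*}
u_{f_\lambda}(t)=U_B(t,t_0)x_0+\int_{t_0}^t U_B(t,s)f_\lambda(s)\,ds,\qquad t\ge t_0.
\end{equation*}
This identity should follow from Proposition \ref{PROP1.5} and the construction of $U_B$ via (\ref{1.8}): one checks that the right-hand side is an integrated solution of $u'=(A+B(t))u+f_\lambda$ with $u(t_0)=x_0$, using that for $X_0$-valued continuous data the convolution term $\tfrac{d}{dt}\int_{t_0}^t S_A(t-s)(\cdot)\,ds$ reduces to the strongly continuous Riemann integral $\int_{t_0}^t T_{A_0}(t-s)(\cdot)\,ds$ composed appropriately, and then invoking uniqueness of integrated solutions (Thieme \cite{Thieme90-a}, and \cite[Theorem 5.2]{Magal-Ruan2007}). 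In parallel, I would show the original $u_f$ — the integrated solution of (\ref{1.1}) — satisfies the analogous characterization, so that $u_f - u_{f_\lambda}$ is the integrated solution of the same homogeneous-type equation with inhomogeneity $f-f_\lambda$ and zero initial value.

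Then the convergence is obtained from the a priori estimate built into Assumptions \ref{ASS1.2}–\ref{ASS1.3}. Writing $v_\lambda:=u_f-u_{f_\lambda}$, which solves $v_\lambda'=(A+B(t))v_\lambda+(f-f_\lambda)$ with $v_\lambda(t_0)=0$, one uses the fixed-point/integral representation together with (\ref{1.4}) and $\|B(t)\|\le b(t)$ to get a Gronwall-type bound of the form
\begin{equation*}
\sup_{t\in[t_0,T]}\|v_\lambda(t)\|\le C(T,t_0)\sup_{s\in[t_0,T]}\|f(s)-f_\lambda(s)\|,
\end{equation*}
with $C(T,t_0)$ depending only on $\delta(\cdot)$, $b(\cdot)$ and $T-t_0$ (this is where one iterates the local estimate $\delta(t)\to 0$ over subintervals, exactly as in the proof of Proposition \ref{PROP1.5}). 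Since the right-hand side tends to $0$ uniformly for $t_0,t$ in any fixed compact $I$, the limit in (\ref{1.9}) exists in $X_0$, equals $u_f(t)$, and the convergence is uniform on $I$. The main obstacle I anticipate is the second step: carefully justifying that for $X_0$-valued continuous inhomogeneity the integrated solution is genuinely given by the naive variation of constants formula with $U_B$ — i.e. interchanging $\tfrac{d}{dt}$ with the integral and identifying $\tfrac{d}{dt}\int_{t_0}^t S_A(t-s)f_\lambda(s)\,ds$ with $\int_{t_0}^t T_{A_0}(t-s)f_\lambda(s)\,ds$ — and then propagating this through the perturbation series defining $U_B$; the convergence step itself is then routine Gronwall bookkeeping.
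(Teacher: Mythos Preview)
Your approach has a genuine gap at the convergence step. You claim that $f_\lambda(s)=\lambda R_\lambda(A)f(s)\to f(s)$ in $X$, but this is false in general: the convergence $\lambda R_\lambda(A)x\to x$ holds only for $x\in X_0=\overline{D(A)}$ (Lemma \ref{LE2.1}-iii)), while for $x\in X\setminus X_0$ the limit cannot be $x$ since $\lambda R_\lambda(A)x\in X_0$ and $X_0$ is closed. Assumption \ref{ASS1.1}-ii) gives only $R_\lambda(A)x\to 0$, not $\lambda R_\lambda(A)x\to x$. Hence your Gronwall bound $\sup\|v_\lambda\|\le C\sup\|f-f_\lambda\|$, although correct as an inequality, has a right-hand side that does \emph{not} tend to zero, and the argument stalls precisely in the non-densely-defined case the theorem is meant to cover.

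The paper circumvents this by never comparing $f_\lambda$ with $f$ directly. Instead it shows (Lemma \ref{LE3.3}, via the Fubini-type Lemma \ref{LE3.2}) that $v_\lambda(t,t_0):=\int_{t_0}^t U_B(t,s)f_\lambda(s)\,ds$ satisfies the same fixed-point equation as the true solution $v$, except that the forcing term $w(t,t_0):=(S_A\diamond f(t_0+\cdot))(t-t_0)$ is replaced by $\lambda R_\lambda(A)w(t,t_0)$. The key point is that $w(t,t_0)\in X_0$ always, so $[\lambda R_\lambda(A)-I]w(t,t_0)\to 0$ uniformly on compacts; Proposition \ref{PROP2.16} then closes the estimate. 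In effect, the integrated semigroup $S_A\diamond$ first maps $f$ into $X_0$, and only \emph{then} is the resolvent approximation invoked. Your step 3 (the $X_0$-valued variation of constants) is fine and is essentially Corollary \ref{COR3.1}; it is the passage to the limit that needs this extra idea.
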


Our second main result deal with a necessary and sufficient condition for
the evolution family (generated by the homogeneous problem associated to
system (\ref{1.1})) to have an exponential dichotomy. To be more precise let
us first recall some definitions and state our result.

\begin{definition}
\label{DEF1.7} Let $(Z,\Vert \cdot \Vert_Z)$ be a Banach space. We say that $%
\{\Pi(t)\}_{t\in \mathbb{R}} \subset \mathcal{L}(Z)$ is strongly continuous
family of projectors on $Z$ if 
\begin{equation*}
\Pi(t)\Pi(t)=\Pi(t), \ \forall t\in \mathbb{R},
\end{equation*}
and for each $x\in Z$, $t\rightarrow \Pi(t)x$ is continuous from $\mathbb{R}$
into $Z$.
\end{definition}

The following notion of exponential dichotomy will be used in this paper. We
refer for instance to \cite{ DMS2015a, DMS2015b, Rabiger, Hale-Lin, Henry,
Lian} and the references therein.

\begin{definition}
\label{DEF1.8} Let $(Z,\Vert \cdot \Vert_Z)$ be a Banach space. We say that
an evolution family $\lbrace U(t,s)\rbrace_{(t,s)\in \Delta}\subset \mathcal{%
L}(Z)$ has an exponential dichotomy with constant $\kappa\geq 1$ and
exponent $\beta>0$ if and only if the following properties are satisfied

\begin{itemize}
\item[i)] There exist two strongly continuous family of projectors $\lbrace
\Pi^+(t) \rbrace_{t\in \mathbb{R}}$ and $\lbrace \Pi^-(t)\rbrace_{t\in 
\mathbb{R}}$ on $Z$ such that 
\begin{equation*}
\Pi^+(t)+\Pi^-(t)=I_{\mathcal{L}(Z)}, \quad \forall t\in \mathbb{R}.
\end{equation*}
Then we define for all $t\geq s$ 
\begin{equation*}
U^{+}(t,s):=U(t,s)\Pi^{+}(s) \quad \text{and} \quad
U^{-}(t,s):=U(t,s)\Pi^{-}(s).
\end{equation*}

\item[ii)] For all $(t,s)\in \Delta$ we have $\Pi^{+}(t)U(t,s)=U(t,s)%
\Pi^{+}(s)$ and then $\Pi^{-}(t)U(t,s)=U(t,s)\Pi^{-}(s)$.

\item[iii)] For all $(t,s)\in \Delta$ the restricted linear operator $%
U(t,s)\Pi^{-}(s)$ is invertible from $\Pi^-(s)(Z)$ into $\Pi^-(t)(Z)$ with
inverse denoted by $\bar{U}^{-}(s,t)$ and we set 
\begin{equation*}
U^{-}(s,t):=\bar{U}^{-}(s,t)\Pi^{-}(t).
\end{equation*}

\item[iv)] For all $(t,s)\in \Delta$ 
\begin{equation*}
\Vert U^{+}(t,s) \Vert_{\mathcal{L}(Z)} \leq \kappa e^{-\beta (t-s)} \quad 
\text{ and } \quad \Vert U^{-}(s,t) \Vert_{\mathcal{L}(Z)} \leq \kappa e^{-\beta
(t-s)}.
\end{equation*}
\end{itemize}
\end{definition}

In the foregoing Definition \ref{DEF1.8} the notation $+$ and $-$ are used
to refer respectively the \textit{forward time} and the \textit{backward time%
}.

\begin{definition}
\label{DEF1.9} Let $f\in L^{1}_{loc}(\mathbb{R},X)$ be fixed. A function $u
\in C(\mathbb{R},X_0)$ is an integrated (or mild) solution of (\ref{1.1}) if
and only if for each $t \geq t_0$ 
\begin{equation*}
\int_{t_0}^{t} u(r) dr \in D(A)
\end{equation*}
and 
\begin{equation*}
u(t)=x+A\int_{t_0}^{t} u(r) dr +\int_{t_0}^{t}[B(r)u(r)+f(r)]dr.
\end{equation*}
\end{definition}

Then our second main result split into the following two theorems.

\begin{theorem}
\label{THEO1.10} Let Assumptions \ref{ASS1.1}, \ref{ASS1.2} and \ref{ASS1.3}
be satisfied. Assume in addition that 
\begin{equation*}
\sup_{t\in \mathbb{R}} b(t) <+\infty.
\end{equation*}
Then the following assertions are equivalent

\begin{itemize}
\item[i)] The evolution family $\{ U_B(t,s)\}_{(t,s)\in \Delta}$ has an
exponential dichotomy.

\item[ii)] For each $f\in BC(\mathbb{R},X)$, there exists a unique
integrated solution $u\in BC(\mathbb{R},X_0)$ of (\ref{1.1}).
\end{itemize}
\end{theorem}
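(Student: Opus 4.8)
The plan is to funnel everything through Theorem~\ref{TH1.6} so that the non-densely defined Cauchy problem (\ref{1.1}) becomes a statement purely about the evolution family $\{U_B(t,s)\}$ and the regularized forcing operator. Writing
\begin{equation*}
\mathcal{L}(t,s)f:=\underset{\lambda\to+\infty}{\lim}\int_s^t U_B(t,r)\lambda R_\lambda(A)f(r)\,dr,\qquad (t,s)\in\Delta,
\end{equation*}
Theorem~\ref{TH1.6} together with uniqueness of integrated solutions says that $u\in BC(\mathbb{R},X_0)$ is an integrated solution of (\ref{1.1}) if and only if $u(t)=U_B(t,s)u(s)+\mathcal{L}(t,s)f$ for all $(t,s)\in\Delta$. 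I would record two facts used throughout: the finite-interval bound $\|\mathcal{L}(t,s)f\|\le c(t-s)\,\|f\|_\infty$ with $c$ nondecreasing, coming from Assumption~\ref{ASS1.2} and the construction in Proposition~\ref{PROP1.5}; and the uniform convergence of the defining limit on compact time intervals from Theorem~\ref{TH1.6}, which legitimizes all the interchanges of limits, sums and integrals below. Note also the cocycle-type identity $\mathcal{L}(t,s)f=U_B(t,r)\mathcal{L}(r,s)f+\mathcal{L}(t,r)f$ for $t\ge r\ge s$.

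For (i)$\Rightarrow$(ii) I would exhibit the solution through the Green's operator. With $\Pi^\pm,\kappa,\beta$ the dichotomy data and $U^-(t,r)=\bar U^-(t,r)\Pi^-(r)$ for $r\ge t$ as in Definition~\ref{DEF1.8}, set
\begin{equation*}
u(t):=\underset{\lambda\to+\infty}{\lim}\left(\int_{-\infty}^{t}U_B(t,r)\Pi^+(r)\lambda R_\lambda(A)f(r)\,dr-\int_{t}^{+\infty}U^-(t,r)\lambda R_\lambda(A)f(r)\,dr\right).
\end{equation*}
Convergence in $X_0$, uniformly in $t$, and the bound $\|u\|_\infty\le C\|f\|_\infty$ follow by cutting each half-line into intervals of unit length, using $U_B(t,r)\Pi^+(r)=U^+(t,t-1)U_B(t-1,r)\Pi^+(r)$ and its backward analogue to extract the factors $\kappa e^{-\beta}$ of Definition~\ref{DEF1.8}(iv), bounding each unit block by $c(1)\|f\|_\infty$, and summing the geometric series. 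One then verifies the mild identity of the first paragraph for this $u$, with the invariance $\Pi^\pm(t)U_B(t,s)=U_B(t,s)\Pi^\pm(s)$ and the group property of $\bar U^-$ on the unstable fibres doing the bookkeeping, so that $u$ is an integrated solution of (\ref{1.1}). Uniqueness: if $w$ is the difference of two bounded integrated solutions then $w(t)=U_B(t,s)w(s)$ for $t\ge s$; applying $\Pi^+(t)$ and letting $s\to-\infty$ gives $\Pi^+(t)w(t)=0$, applying $\Pi^-(s)$ then $\bar U^-(t,s)$ and letting $s\to+\infty$ gives $\Pi^-(t)w(t)=0$, whence $w\equiv0$.

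For (ii)$\Rightarrow$(i), first note the solution map $\mathcal{G}\colon f\mapsto u_f$ is linear and closed (using that $A$ is closed and $B(\cdot)$ is strongly continuous and bounded), hence bounded from $BC(\mathbb{R},X)$ to $BC(\mathbb{R},X_0)$ by the closed graph theorem; taking $f\equiv0$ shows the homogeneous equation has no nontrivial bounded integrated solution on $\mathbb{R}$. From here I would run the classical Perron reconstruction (or, once the problem is reduced to $U_B$ on $X_0$, quote a known dichotomy criterion for evolution families, e.g.\ \cite{DMS2015a,DMS2015b}): for each $t$ one defines the stable fibre $X^+(t):=\{x\in X_0:\sup_{s\ge t}\|U_B(s,t)x\|<\infty\}$ and the unstable fibre $X^-(t)$ as the set of values $v(t)$ of bounded homogeneous integrated solutions $v$ on $(-\infty,t]$, shows these are closed and complementary in $X_0$ — complementarity, the invariance $\Pi^\pm(t)U_B(t,s)=U_B(t,s)\Pi^\pm(s)$, and the invertibility of $U_B(t,s)$ on the unstable fibres all issuing from the unique solvability in (ii) and the Green's operator construction — and then proves the exponential estimates of Definition~\ref{DEF1.8}(iv). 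That last point is obtained in the usual two stages: a discrete summability estimate for $\|U_B(s,t)x\|$ on $X^+(t)$ and for $\|\bar U^-(t,s)x\|$ on $X^-(s)$, shown by assembling, via $\mathcal{G}$ and a partition of unity in time, a bounded forcing that would otherwise force an unbounded or non-unique solution; followed by a Datko-type lemma upgrading summability to a uniform rate $\kappa e^{-\beta(t-s)}$.

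The step I expect to be the main obstacle is this last stage of (ii)$\Rightarrow$(i): turning mere $L^\infty$-admissibility plus uniqueness into quantitative exponential decay while every ``variation of constants'' manipulation is available only in the regularized limiting form (\ref{1.9}), not as a genuine bounded convolution on $X$. Concretely, the forcings one wants to feed into $\mathcal{G}$ must be built inside $X$ and transported through $\lambda R_\lambda(A)$, which need not be uniformly bounded on $X$; keeping all the cut-and-sum estimates and limit interchanges legitimate, via the uniform convergence in Theorem~\ref{TH1.6} and the bound (\ref{1.4}), is the delicate bookkeeping, while the rest is otherwise the classical Perron/Datko scheme.
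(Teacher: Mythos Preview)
Your (i)$\Rightarrow$(ii) argument is in the same spirit as the paper's: build the candidate via the Green operator, control each half-line integral by a unit-interval decomposition together with the dichotomy estimates, and pass to the limit in $\lambda$. The paper packages exactly this into Lemmas~\ref{LE5.10}--\ref{LE5.12} and then takes $\lambda\to+\infty$ in (\ref{5.25}); the content is the same.

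For (ii)$\Rightarrow$(i) you are working much harder than necessary, and the ``main obstacle'' you flag --- that the variation of constants is only available in the regularized limiting form (\ref{1.9}), with $\lambda R_\lambda(A)$ not uniformly bounded on $X$ --- simply does not arise. The trick the paper uses is to \emph{restrict the class of forcings}: since $BC(\mathbb{R},X_0)\subset BC(\mathbb{R},X)$, hypothesis (ii) in particular says that for every $f\in BC(\mathbb{R},X_0)$ there is a unique bounded integrated solution. But for $X_0$-valued $f$, Corollary~\ref{COR3.1} gives the \emph{genuine} (unregularized) variation of constants formula
\[
u(t)=U_B(t,s)u(s)+\int_s^t U_B(t,r)f(r)\,dr,\qquad (t,s)\in\Delta,
\]
with no limit in $\lambda$ at all. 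This is precisely the hypothesis of the classical admissibility criterion for evolution families on $X_0$ (Theorem~\ref{TH5.4}, quoting \cite{Baskakov,Latushkin,Levitan}), so (i) follows immediately. There is no need for the closed graph theorem, no Perron reconstruction to redo by hand, and none of the delicate limit bookkeeping you anticipate: all of that is absorbed into the black-box Theorem~\ref{TH5.4} once you observe that testing against $BC(\mathbb{R},X_0)$ already suffices for dichotomy.
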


\begin{theorem}
\label{THEO1.11} Let Assumptions \ref{ASS1.1}, \ref{ASS1.2} and \ref{ASS1.3}
be satisfied. Assume in addition that 
\begin{equation*}
\sup_{t\in \mathbb{R}} b(t) <+\infty.
\end{equation*}
If $U_B$ has an exponential dichotomy with exponent $\beta>0$, then for each 
$\eta \in [0,\beta)$ and each $f\in BC^\eta(\mathbb{R},X)$ with 
\begin{equation*}
BC^\eta(\mathbb{R},X):=\left \lbrace f \in C(\mathbb{R},Z): \Vert f
\Vert_{\eta}:= \underset{t\in \mathbb{R}}{\sup }e^{-\eta \vert t \vert}\Vert
f(t) \Vert_Z<+\infty \right \rbrace
\end{equation*}
there exists a unique integrated solution $u\in BC^\eta(\mathbb{R},X_0)$ of (%
\ref{1.1}) which is given by 
\begin{equation}  \label{1.10}
u_f(t)=\underset{\lambda \rightarrow +\infty}{\lim} \int_{-\infty}^{t}
U^+_B(t,s)\lambda R_\lambda(A) f(s)ds-\int_{t}^{+\infty} U^-_B(t,s)\lambda
R_\lambda(A) f(s)ds, \ \forall t\in \mathbb{R}.
\end{equation}
Moreover the following properties hold true 

\begin{itemize}
\item[i)] The limit (\ref{1.10}) exists uniformly on compact subset of $%
\mathbb{R}$.

\item[ii)] If $f$ is bounded and uniformly continuous with relatively
compact range then the limit (\ref{1.10}) is uniform on $\mathbb{R}$.

\item[iii)] For each $\nu\in (-\beta,0)$ there exists $C(\nu,\kappa,\beta)>0$
such that 
\begin{equation*}
\Vert u_f \Vert_\eta \leq C(\nu,\kappa,\beta) \Vert f \Vert_\eta, \ \forall
\eta \in [0, -\nu].
\end{equation*}
\end{itemize}
\end{theorem}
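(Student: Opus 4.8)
\textbf{Proof plan for Theorem \ref{THEO1.11}.}

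The plan is to derive the formula \eqref{1.10} as the natural candidate by splitting the bounded (two-sided) solution from Theorem \ref{THEO1.10} into its stable and unstable parts, and then to verify directly that the right-hand side of \eqref{1.10} defines an element of $BC^\eta(\mathbb{R},X_0)$ which is an integrated solution, with uniqueness coming from the exponential dichotomy. First I would fix $\eta\in[0,\beta)$ and, reasoning as in the proof of Theorem \ref{THEO1.10} but on weighted spaces, observe that the dichotomy projectors $\Pi^\pm$ and the estimates in Definition \ref{DEF1.8}(iv) give $\|U_B^+(t,s)\|\le\kappa e^{-\beta(t-s)}$ for $t\ge s$ and $\|U_B^-(t,s)\|\le\kappa e^{-\beta(s-t)}$ for $t\le s$. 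Since the exponential decay rate $\beta$ strictly exceeds the exponential growth $e^{\eta|t|}$ allowed in $BC^\eta$, the two convolution-type integrals in \eqref{1.10} converge absolutely once $\lambda R_\lambda(A)f(s)$ is replaced by a bounded approximant; the point is to control the $\lambda\to+\infty$ limit.

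The key technical step is to handle the operator $\lambda R_\lambda(A)$ inside the integrals. By Assumption \ref{ASS1.1} and \cite[Lemma 2.1 and Lemma 2.2]{Magal-Ruan2009b}, $\lambda R_\lambda(A)x\to x$ for $x\in X_0$ but only $\|\lambda R_\lambda(A)\|_{\mathcal L(X_0)}$ (not on all of $X$) is uniformly bounded, so $\lambda R_\lambda(A)f(s)$ need not converge in $X$; nevertheless $U_B(t,s)\lambda R_\lambda(A)$ is a bounded operator on $X_0$ and, by the variation of constants formula (Theorem \ref{TH1.6}) applied on finite intervals, $\int_\sigma^t U_B(t,s)\lambda R_\lambda(A)f(s)ds$ converges in $X_0$ uniformly for $t,\sigma$ in compact sets. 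I would combine this with the uniform exponential bounds to pass the limit through the improper integrals: write $\int_{-\infty}^t = \lim_{\sigma\to-\infty}\int_\sigma^t$ and $\int_t^{+\infty}=\lim_{\sigma\to+\infty}\int_t^\sigma$, use a Cauchy criterion in $X_0$ with the geometric tail bounds $\int e^{-(\beta-\eta)|s|}ds$ to get uniformity in $\lambda$, and then interchange the two limits. This simultaneously yields assertion (i) (uniform convergence on compacts) and, by Gronwall-type tail estimates, assertion (iii), where the constant $C(\nu,\kappa,\beta)$ comes from $\kappa\int_{\mathbb R}e^{-\beta|r|}e^{\nu|r|}dr$ type integrals with $\nu\in(-\beta,0)$ and $\eta\le-\nu$.

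For assertion (ii), I would use that when $f$ is bounded and uniformly continuous with relatively compact range, the set $\{\lambda R_\lambda(A)f(s):s\in\mathbb R,\lambda\text{ large}\}$ is relatively compact in $X_0$ (being the image of a compact set under a uniformly bounded family of operators converging strongly to the identity, hence converging uniformly on compacts), which upgrades the pointwise-in-$s$ convergence $\lambda R_\lambda(A)f(s)\to f(s)$ to convergence uniform in $s$; feeding this into the tail estimates removes the dependence of the rate on $t$. Once the formula is established, that $u_f$ given by \eqref{1.10} is an integrated solution of \eqref{1.1} follows by the same computation as in Proposition \ref{PROP1.5} / Theorem \ref{TH1.6}: apply $A\int_{t_0}^{t}\cdot\,dr$ and use that $\Pi^\pm(t)$ commute with $U_B$ (Definition \ref{DEF1.8}(ii)) together with the fixed-point characterization \eqref{1.8}; membership in $BC^\eta(\mathbb R,X_0)$ is exactly the content of (iii) with $\nu=-\eta$. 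Uniqueness: if $u_1,u_2$ are two $BC^\eta$ integrated solutions then $w=u_1-u_2\in BC^\eta(\mathbb R,X_0)$ solves the homogeneous problem, so $w(t)=U_B(t,s)w(s)$; decomposing $w=\Pi^+w+\Pi^-w$ and letting $s\to-\infty$ in the stable part and $s\to+\infty$ in the unstable part, the dichotomy estimates in Definition \ref{DEF1.8}(iv) force $\Pi^+(t)w(t)=0$ and $\Pi^-(t)w(t)=0$, hence $w\equiv0$. The main obstacle is the first part of the second paragraph: rigorously commuting the $\lambda\to+\infty$ limit with the improper integrals over $\mathbb R$ despite $\lambda R_\lambda(A)$ being uniformly bounded only on $X_0$ and not on $X$, which is precisely why the statement is phrased with the limit outside the integral rather than with $f(s)$ itself.
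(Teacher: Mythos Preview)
Your overall architecture is right and matches the paper: define $\mathcal J^\pm_\lambda(f)(t)$ as the improper integrals in \eqref{1.10}, show they exist and are bounded in $BC^\eta$ uniformly in $\lambda$, then pass $\lambda\to+\infty$ by cutting off a finite window (where Theorem~\ref{TH1.6} applies) and controlling the tails. You have also correctly isolated the obstacle. The problem is that your proposed mechanism for the $\lambda$-uniform tail bounds does not actually work, and the paper uses a different device that you are missing.

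Concretely: you write that the tail control and the constant in (iii) come from ``$\kappa\int_{\mathbb R}e^{-\beta|r|}e^{\nu|r|}\,dr$ type integrals'', i.e.\ from estimating $\|U_B^+(t,s)\lambda R_\lambda(A)f(s)\|$ pointwise in $s$ and integrating. But this requires a bound on $\|\lambda R_\lambda(A)\|_{\mathcal L(X,X_0)}$ that is uniform in $\lambda$, and that is exactly what fails when $A$ is not Hille--Yosida. The paper never estimates the integrand pointwise in $s$. Instead it proves (Lemma~\ref{LE3.3} and Proposition~\ref{PROP5.4}) that the \emph{whole block}
\[
v_\lambda(t,t_0)=\int_{t_0}^t U_B(t,s)\lambda R_\lambda(A)f(s)\,ds
\]
satisfies $\|v_\lambda(t,t_0)\|\le \delta^*(t-t_0)\sup_{[t_0,t]}\|f\|$ with $\delta^*$ independent of $\lambda$. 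The trick is the fixed-point identity $v_\lambda(t,t_0)=(S_A\diamond Bv_\lambda(\cdot,t_0))(t-t_0)+\lambda R_\lambda(A)w(t,t_0)$ from Lemma~\ref{LE3.3}(i): here $w(t,t_0)=(S_A\diamond f)(t-t_0)\in X_0$, so the resolvent acts on $X_0$ where it \emph{is} uniformly bounded. One then chops $(-\infty,t]$ into blocks of length $\tau_\varepsilon$, applies this short-time estimate on each block, and propagates with the dichotomy bound on $U_B^+$ to get a geometric series (Propositions~\ref{PROP5.7}--\ref{PROP5.9}). That is where the $\lambda$-uniform constant $\widehat C(1,\nu)$ in Lemmas~\ref{LE5.10}--\ref{LE5.12} comes from, and hence (iii). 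Without this block argument you cannot justify either the Cauchy criterion for the improper integral uniformly in $\lambda$, or the interchange of limits, or the bound (iii).

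Your argument for (ii) also has a gap: you claim $\{\lambda R_\lambda(A)f(s):s\in\mathbb R,\ \lambda\text{ large}\}$ is relatively compact in $X_0$, but $f$ takes values in $X$, not $X_0$, so $\lambda R_\lambda(A)f(s)$ need not converge at all as $\lambda\to\infty$ and this set is not relatively compact. The paper (Proposition~\ref{PROP4.1}) instead shows, via a step-function approximation of $f$ and explicit integrated-semigroup computations (Lemma~\ref{LE3.4}), that the map $s\mapsto w(s,s-t_0)=(S_A\diamond f(s-t_0+\cdot))(t_0)\in X_0$ has relatively compact range; it is on this $X_0$-valued object that $\lambda R_\lambda(A)\to I$ uniformly, and the same fixed-point identity then transfers this to uniform-in-$t$ convergence of $v_\lambda(t,t-t_0)$.
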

Let us mention that similar results have been investigated in \cite{Rabiger} where $(A,D(A))$ is a Hille-Yosida linear operator in the context of extrapolated semigroups. However the fact that $(A,D(A))$ is not Hille-Yosida induces several difficulties that need new technical  arguments in order to prove our more general results that  by using integrated semigroups theory.\\

The paper is organized has follow. In section \ref{Sect-Preliminaries} we
recall some results concerning integrated semigroups and define the notion
of integrated solution for system (\ref{1.1}). Section \ref{Sect-Variation
of constants} is devoted to the the proof of Theorem \ref{TH1.6} concerning
the variation of constants formula. In Section \ref{Sect-Uniform convergence}
we prove some uniform convergence results. Finally Theorems \ref{THEO1.10}
and \ref{THEO1.11} are proved in Section \ref{Sect-Exponential dichotomy}.

\section{Preliminaries}

\label{Sect-Preliminaries} In the subsequent lemma we summarize some results
proved in Magal and Ruan \cite[Lemma 2.1 and Lemma 2.2]{Magal-Ruan2009b}.
For more simplicity in the notations we define 
\begin{equation*}
R_\lambda(A):=(\lambda I-A)^{-1}, \ \forall \lambda>\omega.
\end{equation*}

\begin{lemma}
\label{LE2.1} Let Assumption \ref{ASS1.1} be satisfied. Then we have 
\begin{equation*}
\rho (A)=\rho (A_{0}).
\end{equation*}%
Moreover, we have the following properties

\begin{itemize}
\item[i)] For each $\lambda >\omega $ 
\begin{equation*}
D(A_{0})=R_{\lambda }(A)(X_{0})\ \text{ and }\ R_{\lambda }(A)_{\mid
X_{0}}=R_{\lambda }(A_{0}).
\end{equation*}

\item[ii)] $\overline{D(A_{0})}=X_{0}$.

\item[iii)] $\underset{\lambda \rightarrow +\infty }{\lim }R_{\lambda
}(A)x=x,\ \forall x\in X_{0}$.
\end{itemize}
\end{lemma}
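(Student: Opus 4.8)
The plan is to recall the construction of $A_0$ and exploit the resolvent identity, relying on the already-cited equivalence (from \cite{Magal-Ruan2009b}) between Assumption \ref{ASS1.1} and the statement that $A_0$ is a Hille-Yosida operator on $X_0$. First I would prove $\rho(A)\subset\rho(A_0)$ together with part i). Fix $\lambda>\omega$. Since $A_0$ is the part of $A$ in $X_0$, one checks directly that $\lambda I-A_0$ is injective on $D(A_0)$: if $x\in D(A_0)$ and $(\lambda I-A)x=0$ then $x=R_\lambda(A)\,0=0$. For surjectivity onto $X_0$, given $y\in X_0$ set $x:=R_\lambda(A)y\in D(A)$; then $Ax=\lambda x-y\in X_0$ because both $x\in X_0$ (by Assumption \ref{ASS1.1}, as $R_\lambda(A)\in\mathcal L(\overline{D(A)})=\mathcal L(X_0)$ maps $X_0$ into $X_0$) and $y\in X_0$, hence $x\in D(A_0)$ and $(\lambda I-A_0)x=y$. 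This simultaneously shows $R_\lambda(A)(X_0)\subseteq D(A_0)$ and, combined with injectivity, that $R_\lambda(A)(X_0)=D(A_0)$ and $R_\lambda(A)_{\mid X_0}=R_\lambda(A_0)$, so $(\omega,+\infty)\subset\rho(A_0)$ and i) holds for all $\lambda>\omega$. To extend i) to a general $\lambda\in\rho(A)$ and to obtain the reverse inclusion $\rho(A_0)\subset\rho(A)$, I would use the resolvent identity to propagate the identity $R_\lambda(A)_{\mid X_0}=R_\lambda(A_0)$ from $(\omega,+\infty)$ to all of $\rho(A)\cap\rho(A_0)$, and a standard connectedness/Neumann-series argument (both resolvent sets are open, and on the overlap the resolvents agree) to conclude $\rho(A)=\rho(A_0)$.

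For ii), $\overline{D(A_0)}=X_0$: by i), $D(A_0)=R_\lambda(A)(X_0)$, and by Assumption \ref{ASS1.1}-ii) together with the Hille-Yosida bound on $A_0$, the rescaled resolvents $\lambda R_\lambda(A_0)=\lambda R_\lambda(A)_{\mid X_0}$ are uniformly bounded in $\mathcal L(X_0)$ for large $\lambda$ and converge strongly; I would first establish iii) and then iv) follows since each $\lambda R_\lambda(A)x\in D(A_0)$ converges to $x$ for every $x\in X_0$, giving density.

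For iii), $\lim_{\lambda\to+\infty}R_\lambda(A)x=x$ for $x\in X_0$: the family $\{\lambda R_\lambda(A)\}_{\lambda>\omega}$ is bounded in $\mathcal L(X_0)$ by Assumption \ref{ASS1.1}-i) (with $k=1$, giving $\|\lambda R_\lambda(A)\|_{\mathcal L(X_0)}\le M\lambda/(\lambda-\omega)$, bounded as $\lambda\to+\infty$). On the dense subset $D(A_0)$ the convergence is elementary: for $x\in D(A_0)$, $\lambda R_\lambda(A)x-x=R_\lambda(A)A_0x$, whose norm is at most $\frac{M}{\lambda-\omega}\|A_0x\|\to0$. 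A standard $3\varepsilon$ density argument — using the uniform bound to control the error off the dense set — then yields convergence for all $x\in X_0$. Logically I would present iii) before ii), since ii) uses iii); the write-up can reorder accordingly or simply note the dependence.

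The main obstacle is not any single computation but the bookkeeping needed to pass from the range $(\omega,+\infty)$, where everything is handed to us by Assumption \ref{ASS1.1}, to the full resolvent set in order to get the clean equality $\rho(A)=\rho(A_0)$; the resolvent identity plus the openness of resolvent sets handles it, but one must be careful that $A_0$ is genuinely the part of $A$ and not merely a restriction, so that no spurious spectrum is introduced. Everything else is routine once i) is in place. Since the statement is explicitly quoted from \cite[Lemma 2.1 and Lemma 2.2]{Magal-Ruan2009b}, a fully detailed proof could even be omitted, but the sketch above is the natural self-contained argument.
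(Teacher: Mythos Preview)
The paper does not actually prove Lemma~\ref{LE2.1}; it merely records the statement and cites \cite[Lemmas 2.1 and 2.2]{Magal-Ruan2009b}. So there is no in-paper argument to compare against, and your decision to supply a self-contained sketch is reasonable. The outline for i) and the inclusion $\rho(A)\subset\rho(A_0)$ is fine (and in fact your argument for $\lambda>\omega$ works verbatim for any $\lambda\in\rho(A)$, since $R_\lambda(A)$ always lands in $D(A)\subset X_0$).

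There are, however, two genuine soft spots. First, your proofs of ii) and iii) are circular as written: you prove iii) by a $3\varepsilon$ argument on ``the dense subset $D(A_0)$'', but density of $D(A_0)$ in $X_0$ is precisely ii), which you then deduce from iii). The fix is to run the approximation on $D(A)$ instead, which is dense in $X_0=\overline{D(A)}$ by definition. For $x\in D(A)$ one has $\lambda R_\lambda(A)x-x=R_\lambda(A)Ax$, and this tends to $0$ by Assumption~\ref{ASS1.1}-ii) (no need for the $\mathcal L(X_0)$-bound here, since $Ax$ need not lie in $X_0$). The uniform boundedness of $\lambda R_\lambda(A)$ on $X_0$ then upgrades this to all of $X_0$, and ii) follows.

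Second, your route to $\rho(A_0)\subset\rho(A)$ via ``connectedness/Neumann series'' is not an argument: agreement of the two resolvents on the overlap does not by itself force the resolvent sets to coincide. A clean direct proof is available. Fix $\mu>\omega$, so $\mu\in\rho(A)\cap\rho(A_0)$, and observe that $\lambda\in\rho(A)$ iff $I+(\lambda-\mu)R_\mu(A)$ is bijective on $X$, and similarly with $A_0$ on $X_0$. If $I+(\lambda-\mu)R_\mu(A_0)$ is bijective on $X_0$, then for $y\in X$ solve $x_0+(\lambda-\mu)R_\mu(A_0)x_0=-(\lambda-\mu)R_\mu(A)y\in X_0$ and set $x:=y+x_0$; one checks $(I+(\lambda-\mu)R_\mu(A))x=y$, giving surjectivity, while injectivity is immediate since any kernel element lies in $X_0$. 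This yields $\rho(A_0)\subset\rho(A)$ without any topological detour.
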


\begin{remark}
It can be easily proved that $\underset{\lambda \rightarrow +\infty }{\lim }%
R_{\lambda }(A)x=x$ uniformly for $x$ in a relatively compact subset of $%
X_{0}$. This property we will be often used in this paper.
\end{remark}

Note that if $(A,D(A))$ satisfies Assumption \ref{ASS1.1} then by Lemma \ref%
{LE2.1} we have 
\begin{equation*}
\left\Vert R_{\lambda }(A_0)^{k}\right\Vert _{\mathcal{L}(X_0)}\leq M \left(
\lambda -\omega \right) ^{-k},\;\forall \lambda >\omega,\;k\geq 1 \ \text{%
and } \overline{D(A_0)}=X_0.
\end{equation*}
Therefore $(A_0,D(A_0))$ generates a strongly continuous semigroup $\lbrace
T_{A_0}(t) \rbrace_{t\geq 0}\subset \mathcal{L}(X_0)$ with 
\begin{equation*}
\Vert T_{A_0}(t)\Vert_{\mathcal{L}(X_0)}\leq M e^{\omega t}, \quad \forall
t\geq 0.
\end{equation*}
The characterization of an integrated semigroup is summarized in the
definition below.

\begin{definition}
Let $(X,\Vert \cdot \Vert)$ be a Banach space. A family of bounded linear
operators $\{S(t)\}_{t\geq 0}$ on $X$ is called an integrated semigroup if

\begin{itemize}
\item[i)] $S(0)x=0, \forall x\in X$.

\item[ii)] $t\rightarrow S(t)x$ is continuous on $[0,+\infty)$ for each $%
x\in X$.

\item[iii)] For each $t\geq 0$, $S(t)$ satisfies 
\begin{equation*}
S(s)S(t)=\int_0^s [S(r+t)-S(r)]dr, \ \forall s\geq 0.
\end{equation*}
\end{itemize}

The integrated semigroup $\{S(t)\}_{t\geq 0}$ is said non-degenerate if 
\begin{equation*}
S(t)x=0, \ \forall t\geq 0 \Rightarrow x=0.
\end{equation*}
Moreover we will say that $(A,D(A))$ generates an integrated semigroup $%
\lbrace S_A(t)\rbrace_{t\geq 0} \subset \mathcal{L}(X,X_0)$ that is 
\begin{equation*}
x\in D(A) \ \text{and}\ y=Ax \Leftrightarrow S_A(t)x=tx+\int_0^t S(s)yds, \
\forall t\geq 0.
\end{equation*}
\end{definition}

The following result is well known in the context of integrated semigroups.

\begin{proposition}
Let Assumption \ref{ASS1.1} be satisfied. Then $(A,D(A))$ generates a
uniquely determined non-degenerate exponentially bounded integrated
semigroup with 
\begin{equation*}
\Vert S_A(t) \Vert_{\mathcal{L}(X)} \leq \hat{M}e^{\hat{\omega}t},
\end{equation*}
where $\hat{M}> 0$, $\hat{\omega}>0$ and $(\hat{\omega},+\infty)\in \rho(A)$.%
\newline
Moreover the following properties hold

\begin{itemize}
\item[i)] For each $x\in X$, each $t\geq 0$, each $\mu >\omega$, $S_A(t)x$
is given by 
\begin{equation*}
S_A(t)x=(\lambda I- A)\int_0^t T_{A_0}(s)ds (\lambda I- A)^{-1}, \ \forall
\lambda>\omega,
\end{equation*}
or equivalently 
\begin{equation*}
S_A(t)x=\mu \int_0^t T_{A_0}(s) R_\mu(A)xds+[I-T_{A_0}(t)]R_\mu(A)x.
\end{equation*}

\item[ii)] The map $t\rightarrow S_A(t)x$ is continuously differentiable if
and only if $x\in X_0$ and 
\begin{equation*}
\dfrac{dS_A(t)x}{dt}=T_{A_0}(t)x, \ \forall t\geq 0, \ \forall x\in X_0.
\end{equation*}
\end{itemize}
\end{proposition}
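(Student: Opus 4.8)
The plan is to construct $\{S_A(t)\}_{t\ge0}$ explicitly from the $C_0$-semigroup $\{T_{A_0}(t)\}_{t\ge0}$ and then verify each assertion, using throughout the equivalence recalled above (from \cite{Magal-Ruan2009b}) between Assumption~\ref{ASS1.1} and the pair of facts ``$\rho(A)=\rho(A_0)\neq\varnothing$ and $A_0$ is Hille--Yosida on $X_0$'', together with the identities of Lemma~\ref{LE2.1}. First I would \emph{take as definition}
$$
S_A(t)x:=\mu\int_0^t T_{A_0}(s)R_\mu(A)x\,ds+\bigl[I-T_{A_0}(t)\bigr]R_\mu(A)x,\qquad t\ge0,\ x\in X,
$$
for one fixed $\mu>\omega$ (note $R_\mu(A)x\in D(A)\subset X_0$, so the integrand makes sense), and check that the right-hand side does not depend on $\mu>\omega$. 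For this I would use the computation $\mu R_\mu(A)x-\nu R_\nu(A)x=A_0\bigl(R_\mu(A)x-R_\nu(A)x\bigr)$ with $R_\mu(A)x-R_\nu(A)x\in D(A_0)$ (a consequence of the resolvent identity and of $R_\lambda(A)_{\mid X_0}=R_\lambda(A_0)$), combined with $\int_0^t T_{A_0}(s)A_0w\,ds=T_{A_0}(t)w-w$ for $w\in D(A_0)$; the $\mu$- and $\nu$-expressions then cancel exactly.

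Exponential boundedness is then immediate: from $\|T_{A_0}(t)\|_{\mathcal L(X_0)}\le Me^{\omega t}$ and the boundedness of $R_\mu(A)$ one gets $\|S_A(t)\|_{\mathcal L(X)}\le\hat Me^{\hat\omega t}$ for any $\hat\omega>\max(\omega,0)$ and a suitable $\hat M>0$, and $(\hat\omega,+\infty)\subset(\omega,+\infty)\subset\rho(A)$. The equivalence of the two displayed expressions in (i) is the identity $(\lambda I-A)\int_0^t T_{A_0}(s)\,ds\,R_\lambda(A)x=\lambda\int_0^t T_{A_0}(s)R_\lambda(A)x\,ds-A_0\int_0^t T_{A_0}(s)R_\lambda(A)x\,ds$, which reduces to the second formula upon using once more $A_0\int_0^t T_{A_0}(s)R_\lambda(A)x\,ds=T_{A_0}(t)R_\lambda(A)x-R_\lambda(A)x$.

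Next I would verify that $\{S_A(t)\}_{t\ge0}$ really is the integrated semigroup generated by $A$. That $S_A(0)=0$ and $t\mapsto S_A(t)x$ is continuous are read off the formula; the functional equation $S_A(s)S_A(t)=\int_0^s[S_A(r+t)-S_A(r)]\,dr$ follows from the semigroup law of $T_{A_0}$ by a direct (if slightly tedious) bookkeeping computation; non-degeneracy and the generator identity $x\in D(A),\,y=Ax\Leftrightarrow S_A(t)x=tx+\int_0^tS_A(s)y\,ds$ are obtained by identifying the Laplace transform $\lambda\int_0^{+\infty}e^{-\lambda t}S_A(t)x\,dt=R_\lambda(A)x$ for $\lambda>\hat\omega$ and invoking uniqueness from the Hille--Yosida/integrated-semigroup theory of Arendt, Thieme and Kellermann--Hieber (\cite{Arendt87-a,Thieme90-a,Kellermann}); alternatively these facts can simply be cited, since $A_0$ being Hille--Yosida places us in the classical framework.

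The one point requiring genuine care is (ii). For $x\in X_0$, Lemma~\ref{LE2.1}(i) gives $R_\mu(A)x=R_\mu(A_0)x\in D(A_0)$, so $t\mapsto T_{A_0}(t)R_\mu(A_0)x$ is $C^1$ with derivative $T_{A_0}(t)A_0R_\mu(A_0)x=T_{A_0}(t)\bigl(\mu R_\mu(A_0)-I\bigr)x$; differentiating the explicit formula then yields
$$
\frac{d}{dt}S_A(t)x=\mu T_{A_0}(t)R_\mu(A_0)x-T_{A_0}(t)\bigl(\mu R_\mu(A_0)-I\bigr)x=T_{A_0}(t)x,
$$
which is continuous, so $S_A(\cdot)x\in C^1([0,+\infty),X)$. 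Conversely, suppose $t\mapsto S_A(t)x$ is differentiable at $0$. Since each $S_A(h)x$ lies in the \emph{closed} subspace $X_0$ and the term $\mu\int_0^h T_{A_0}(s)R_\mu(A)x\,ds$ is already differentiable at $0$ (with derivative $\mu R_\mu(A)x$), the difference quotient forces $\lim_{h\to0^+}h^{-1}\bigl(I-T_{A_0}(h)\bigr)R_\mu(A)x$ to exist; by the generation theorem for $\{T_{A_0}(t)\}$ this is equivalent to $R_\mu(A)x\in D(A_0)=R_\mu(A)(X_0)$, and injectivity of $R_\mu(A)$ gives $x\in X_0$. The main obstacle is thus not a deep one: it is keeping the $X$-valued and $X_0$-valued objects carefully apart and extracting the ``differentiability at $0$'' criterion — everything else is either a resolvent manipulation or a citation to the classical integrated-semigroup theory available once $A_0$ is known to be Hille--Yosida.
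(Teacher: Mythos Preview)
Your proof sketch is correct, but note that the paper itself does \emph{not} prove this proposition: it is introduced with the phrase ``The following result is well known in the context of integrated semigroups'' and stated without proof, as background material drawn from the literature (Arendt, Thieme, Kellermann--Hieber, and especially Magal--Ruan \cite{Magal-Ruan2007,Magal-Ruan2009b}). So there is no paper proof to compare against; you have supplied one where the authors simply cite.

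That said, your argument is sound. The construction of $S_A(t)$ via the explicit formula, the $\mu$-independence check using the resolvent identity and $\int_0^t T_{A_0}(s)A_0w\,ds=T_{A_0}(t)w-w$, and the exponential bound are all routine and correct. Your treatment of (ii) is the right one: the forward direction is a clean cancellation, and for the converse your observation that differentiability at $0$ forces $\lim_{h\to0^+}h^{-1}(T_{A_0}(h)-I)R_\mu(A)x$ to exist, hence $R_\mu(A)x\in D(A_0)=R_\mu(A)(X_0)$, hence $x\in X_0$ by injectivity, is exactly the standard argument. One minor remark: for the integrated-semigroup axioms and the generator characterization you are right to defer to the cited references rather than redo the functional-equation computation, since Assumption~\ref{ASS1.1} places you squarely in the setting already handled there.
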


Next we give the notion of integrated solution for system (\ref{1.1}).

\begin{definition}
Let $t_0 \in \mathbb{R}$ and let $f\in L^{1}_{loc}((t_0,+\infty),X)$ be
fixed. A function $u \in C([t_0,+\infty),X_0)$ is an integrated (or mild)
solution of (\ref{1.1}) if and only if for each $t \geq t_0$ 
\begin{equation*}
\int_{t_0}^{t} u(r) dr \in D(A)
\end{equation*}
and 
\begin{equation*}
u(t)=x+A\int_{t_0}^{t} u(r) dr +\int_{t_0}^{t}[B(r)u(r)+f(r)]dr.
\end{equation*}
\end{definition}

The following result is a direct consequence of Theorem 2.10 in \cite%
{Magal-Ruan2009a}.

\begin{theorem}
\label{TH2.11} Let Assumptions \ref{ASS1.1} and \ref{ASS1.2} be satisfied.
Let $t_0\in \mathbb{R}$ be fixed. Then for all $f\in C([t_0,+\infty),X)$,
the map $t\rightarrow (S_A \ast f(t_0+\cdot))(t-t_0)$ is continuously
differentiable from $[t_0,+\infty)$ into $X$ and satisfies the following
properties

\begin{itemize}
\item[i)] $(S_A \ast f(t_0+\cdot))(t-t_0) \in D(A)$, $\forall t\geq t_0$.

\item[ii)] If we set 
\begin{equation*}
u(t):=(S_A \diamond f(t_0+\cdot))(t-t_0), \ \forall t\geq t_0,
\end{equation*}
then the following hold 
\begin{equation*}
u(t)=A\int_{t_0}^t u(s)ds+\int_{t_0}^t f(s)ds, \ \forall t\geq t_0,
\end{equation*}
and 
\begin{equation*}
\Vert u(t)\Vert \leq \delta(t-t_0) \underset{s\in [t_0,t]}{\sup}\Vert f(s)
\Vert, \ \forall t\geq t_0.
\end{equation*}

\item[iii)] For all $\lambda \in (\omega ,+\infty)$ we have for each $t\geq
t_0$ 
\begin{eqnarray*}
R_\lambda(A) \dfrac{d}{dt}(S_A \ast f(t_0+\cdot))(t-t_0)=\int_{t_0}^t
T_{A_0}(t-s) R_\lambda(A) f(s)ds.
\end{eqnarray*}
\end{itemize}
\end{theorem}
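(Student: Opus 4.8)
The plan is to reduce everything to the case $t_0=0$ by translation and then to quote \cite[Theorem 2.10]{Magal-Ruan2009a}, supplying only the short extra manipulations needed for item (iii). Fix $t_0\in\mathbb{R}$ and $f\in C([t_0,+\infty),X)$, and set $g(s):=f(t_0+s)$, so that $g\in C([0,+\infty),X)$ and $(S_A\ast f(t_0+\cdot))(t-t_0)=(S_A\ast g)(t-t_0)$. Writing $\tau:=t-t_0\geq 0$, it suffices to prove: $\tau\mapsto(S_A\ast g)(\tau)$ is continuously differentiable from $[0,+\infty)$ into $X$; $(S_A\ast g)(\tau)\in D(A)$; $u_g(\tau):=(S_A\diamond g)(\tau)$ satisfies $u_g(\tau)=A\int_0^\tau u_g(s)\,ds+\int_0^\tau g(s)\,ds$ together with $\|u_g(\tau)\|\leq\delta(\tau)\sup_{s\in[0,\tau]}\|g(s)\|$; and $R_\lambda(A)u_g(\tau)=\int_0^\tau T_{A_0}(\tau-s)R_\lambda(A)g(s)\,ds$ for $\lambda>\omega$.

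For existence and identification, apply Assumption \ref{ASS1.2} to $g|_{[0,\tau]}$ for each $\tau>0$ to get a mild solution; by uniqueness of mild solutions (Thieme \cite{Thieme90-a}) these restrictions are consistent and patch into a single $u_g\in C([0,+\infty),X_0)$ satisfying the integrated-solution identity and the estimate \eqref{1.4}. As recorded after Assumption \ref{ASS1.2}, and as proved in \cite[Theorem 2.10]{Magal-Ruan2009a}, the existence of this mild solution is equivalent to $C^1$ differentiability of $\tau\mapsto(S_A\ast g)(\tau)$ into $X$, with $u_g(\tau)=\frac{d}{d\tau}(S_A\ast g)(\tau)=(S_A\diamond g)(\tau)$. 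Since $(S_A\ast g)(0)=0$, the fundamental theorem of calculus gives $(S_A\ast g)(\tau)=\int_0^\tau u_g(s)\,ds$, which lies in $D(A)$ by the definition of mild solution; this is item (i). Item (ii) is then immediate: the differential identity is the defining property of the mild solution $u_g$, and the norm bound is exactly \eqref{1.4}.

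It remains to prove item (iii). Since $R_\lambda(A)\in\mathcal{L}(X)$ commutes with the integrated semigroup $S_A(t)$ for every $t$ (a general property recalled in the Introduction), it commutes with the convolution and with the time derivative, so
\begin{equation*}
R_\lambda(A)u_g(\tau)=R_\lambda(A)\frac{d}{d\tau}(S_A\ast g)(\tau)=\frac{d}{d\tau}\bigl(S_A\ast R_\lambda(A)g\bigr)(\tau).
\end{equation*}
Because $R_\lambda(A)$ maps $X$ into $D(A)\subset X_0$, the map $s\mapsto R_\lambda(A)g(s)$ lies in $C([0,+\infty),X_0)$, so the equivalence above applies again and $\frac{d}{d\tau}(S_A\ast R_\lambda(A)g)(\tau)=(S_A\diamond R_\lambda(A)g)(\tau)$ is the mild solution in $X_0$ of $v'=A_0v+R_\lambda(A)g(\tau)$, $v(0)=0$. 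As $A_0$ generates the $C_0$-semigroup $\{T_{A_0}(t)\}_{t\geq 0}$ on $X_0$, this mild solution is the classical variation-of-constants integral $\int_0^\tau T_{A_0}(\tau-s)R_\lambda(A)g(s)\,ds$, which is item (iii). (Alternatively, one can avoid invoking the equivalence a second time: set $w(\tau):=R_\lambda(A)u_g(\tau)\in X_0$, apply $R_\lambda(A)$ to the identity in item (ii), use $R_\lambda(A)Ax=\lambda R_\lambda(A)x-x$ for $x\in D(A)$ together with $\int_0^\tau u_g(s)\,ds\in D(A)\cap X_0$ to obtain $w(\tau)=A_0\int_0^\tau w(s)\,ds+\int_0^\tau R_\lambda(A)g(s)\,ds$, and then identify $w$ with $\int_0^\tau T_{A_0}(\tau-s)R_\lambda(A)g(s)\,ds$ by uniqueness of mild solutions for $A_0$.)

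The only ingredient that is not pure bookkeeping is the equivalence between existence of the mild solution and $C^1$ regularity of $S_A\ast g$ with $u_g=S_A\diamond g$; this is precisely where the failure of the Hille--Yosida property is absorbed, and it is supplied verbatim by \cite[Theorem 2.10]{Magal-Ruan2009a}, so I would quote it rather than reprove it. Everything else --- the translation, the patching of local solutions, and the commutation/uniqueness steps for item (iii) --- is routine, so no serious obstacle is expected.
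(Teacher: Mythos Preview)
Your proposal is correct and follows exactly the paper's approach: the paper states this result as ``a direct consequence of Theorem 2.10 in \cite{Magal-Ruan2009a}'' without further proof, and you carry out precisely the translation to $t_0=0$ and citation of that theorem, with the expected extra commutation/uniqueness argument for item (iii).
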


As a consequence of \textit{iii)} in Theorem \ref{TH2.11}, we obtain the
following approximation formula 
\begin{equation}  \label{2.1}
\dfrac{d}{dt}\int_{t_0}^t S_A(t-s)f(s)ds=\underset{\lambda\rightarrow+\infty}%
{\lim} \int_{t_0}^t T_{A_0}(t-s)\lambda R_\lambda(A) f(s)ds, \ \forall t\geq
t_0.
\end{equation}
It also follows that for each $t,h \geq 0$ 
\begin{equation}
(S_A \diamond f)(t+h)=T_{A_0}(h)(S_A \diamond f)(t) +(S_A \diamond
f(t+\cdot))(h).  \label{2.2}
\end{equation}
As an immediate consequence of Theorem \ref{TH2.11} we obtain the following
lemma.

\begin{lemma}
\label{LE2.12} Let Assumptions \ref{ASS1.1} and \ref{ASS1.2} be satisfied.
Let $f\in C(\mathbb{R},X)$. Then the map $(t,t_0) \rightarrow (S_A \diamond
f(t_0+\cdot))(t-t_0)$ is continuous from $\Delta$ into $X$.
\end{lemma}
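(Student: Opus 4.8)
The plan is to deduce Lemma \ref{LE2.12} from Theorem \ref{TH2.11} by reducing the joint continuity of $(t,t_0)\mapsto (S_A\diamond f(t_0+\cdot))(t-t_0)$ on $\Delta$ to two separate ingredients: fixed-$t_0$ continuity in $t$ (which is exactly the content of Theorem \ref{TH2.11}, since $t\mapsto (S_A\diamond f(t_0+\cdot))(t-t_0)$ is continuously differentiable, hence continuous), and control of the dependence on the base point $t_0$, uniform in $t$ over compact sets. First I would fix a point $(t^\ast,t_0^\ast)\in\Delta$ and a compact neighborhood, say $t,t_0\in[a,b]$ with $t\geq t_0$; the goal is to estimate $\|(S_A\diamond f(t_0+\cdot))(t-t_0)-(S_A\diamond f(t_0^\ast+\cdot))(t^\ast-t_0^\ast)\|$ and split it via the triangle inequality through the intermediate term $(S_A\diamond f(t_0^\ast+\cdot))(t-t_0)$.

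For the second piece, $\|(S_A\diamond f(t_0^\ast+\cdot))(t-t_0)-(S_A\diamond f(t_0^\ast+\cdot))(t^\ast-t_0^\ast)\|$, I would invoke the $t$-continuity from Theorem \ref{TH2.11} applied to the fixed function $g:=f(t_0^\ast+\cdot)$: since $s:=t-t_0$ and $s^\ast:=t^\ast-t_0^\ast$ are both close when $(t,t_0)$ is close to $(t^\ast,t_0^\ast)$, and $s\mapsto (S_A\diamond g)(s)$ is continuous, this term is small. For the first piece, $\|(S_A\diamond f(t_0+\cdot))(t-t_0)-(S_A\diamond f(t_0^\ast+\cdot))(t-t_0)\|$, I would use the linearity of $f\mapsto S_A\diamond f$ together with the estimate in Theorem \ref{TH2.11}(ii), namely $\|(S_A\diamond h)(\tau)\|\leq \delta(\tau)\sup_{[0,\tau]}\|h\|$; applied to $h=f(t_0+\cdot)-f(t_0^\ast+\cdot)$ and $\tau=t-t_0\leq b-a$, this is bounded by $\delta(b-a)\sup_{\sigma\in[0,b-a]}\|f(t_0+\sigma)-f(t_0^\ast+\sigma)\|$.

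The main obstacle is therefore showing that $\sup_{\sigma\in[0,b-a]}\|f(t_0+\sigma)-f(t_0^\ast+\sigma)\|\to 0$ as $t_0\to t_0^\ast$. This is exactly uniform continuity of $f$ on compact sets: $f\in C(\mathbb{R},X)$ is uniformly continuous on the compact interval $[a-|b-a|,\,b+|b-a|]$ (a set containing all relevant arguments $t_0+\sigma$ and $t_0^\ast+\sigma$ once $t_0$ is near $t_0^\ast$), so given $\varepsilon>0$ there is $\rho>0$ with $\|f(\xi)-f(\xi')\|<\varepsilon$ whenever $|\xi-\xi'|<\rho$; taking $|t_0-t_0^\ast|<\rho$ forces $|(t_0+\sigma)-(t_0^\ast+\sigma)|<\rho$ for every $\sigma$, giving the claim. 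Combining the two estimates, for any $\varepsilon>0$ we obtain $\delta>0$ such that $|t-t^\ast|+|t_0-t_0^\ast|<\delta$ implies the total difference is at most, say, $\delta(b-a)\varepsilon + \varepsilon$, which proves joint continuity at $(t^\ast,t_0^\ast)$. Since $(t^\ast,t_0^\ast)\in\Delta$ was arbitrary, the map is continuous on all of $\Delta$.
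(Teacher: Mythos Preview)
Your proof is correct and follows the same strategy as the paper's: the same triangle-inequality splitting, with one term controlled by the $\delta$-estimate of Theorem~\ref{TH2.11}(ii) together with uniform continuity of $f$ on compact intervals, and the other by continuity in the time variable for fixed base point (the paper obtains the latter via the identity~(\ref{2.2}), whereas you invoke it directly from Theorem~\ref{TH2.11}). One small wording slip: Theorem~\ref{TH2.11} asserts that $t\mapsto (S_A\ast g)(t)$ is $C^1$, so its derivative $(S_A\diamond g)$ is continuous---not that $(S_A\diamond g)$ itself is $C^1$---but the conclusion you need (continuity of $s\mapsto (S_A\diamond g)(s)$) is exactly what follows.
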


\begin{proof}
Let $(t,t_0),(s,s_0) \in \Delta$. We have 
\begin{equation*}
\begin{array}{ll}
I & :=(S_A \diamond f(t_0+\cdot))(t-t_0)-(S_A \diamond f(s_0+\cdot))(s-s_0)
\\ 
& =(S_A \diamond \big[ f(t_0+\cdot)-f(s_0+\cdot) \big] )(t-t_0) \\ 
& +(S_A \diamond f(s_0+\cdot))(t-t_0)-(S_A \diamond f(s_0+\cdot))(s-s_0)%
\end{array}%
\end{equation*}
hence by using (\ref{2.2}) 
\begin{equation*}
\begin{array}{ll}
I & =(S_A \diamond \big[ f(t_0+\cdot)-f(s_0+\cdot) \big] )(t-t_0) \\ 
& +\big[T_{A_0}((t-t_0)-(s-s_0))-I \big](S_A \diamond f(s_0+\cdot))(s-s_0)
\\ 
& +(S_A \diamond f(s_0+(s-s_0)+\cdot))((t-t_0)-(s-s_0))%
\end{array}%
\end{equation*}
whenever $t-t_0 \geq s-s_0$. The result follows by using the uniform
continuity of $f$ on bounded intervals.
\end{proof}

By using \cite[Proposition 4.1]{Magal-Ruan2009a} we obtain the following
lemma.

\begin{lemma}
\label{LE2.13} Let Assumptions \ref{ASS1.1}, \ref{ASS1.2} and \ref{ASS1.3}
be satisfied. Let $t_0\in \mathbb{R}$ be fixed. Then for each $x_0\in X_0$
and $f\in C([t_0,+\infty),X)$ there exists a unique integrated solution $%
u_f\in C([t_0,+\infty),X_0)$ of (\ref{1.1}) given by 
\begin{equation*}
u_f(t)=T_{A_0}(t-t_0)x_0+\dfrac{d}{dt}(S_A \ast
((Bu_f)(t_0+\cdot)+f(t_0+\cdot))(t-t_0), \ \forall t\geq t_0,
\end{equation*}
or equivalently 
\begin{equation*}
u_f(t)=T_{A_0}(t-t_0)x_0+(S_A \diamond
((Bu_f)(t_0+\cdot)+f(t_0+\cdot))(t-t_0), \ \forall t\geq t_0,
\end{equation*}
where we have used the notation $(Bu_f)(t):=B(t)u_f(t)$ for every $t\geq t_0$%
.
\end{lemma}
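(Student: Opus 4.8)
The plan is to recast the integrated solution as the fixed point of a nonlinear operator and to invoke the contraction mapping principle, exploiting the smallness of $\delta$ near $0$ furnished by Assumption \ref{ASS1.2}. Fix $x_0\in X_0$ and $f\in C([t_0,+\infty),X)$, and on the Banach space $C([t_0,t_0+\tau],X_0)$ equipped with the sup norm define
\begin{equation*}
\mathcal{T}[u](t):=T_{A_0}(t-t_0)x_0+\left(S_A\diamond (Bu+f)(t_0+\cdot)\right)(t-t_0),\qquad (Bu)(s):=B(s)u(s).
\end{equation*}
The two displayed formulas in the statement are identical by the definition $(S_A\diamond g)=\frac{d}{dt}(S_A\ast g)$, so it suffices to solve $\mathcal{T}[u]=u$. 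First I would check that $\mathcal{T}$ is well defined: since $B$ is strongly continuous and locally bounded (Assumption \ref{ASS1.3}) and $u$ is continuous, the map $s\mapsto B(s)u(s)+f(s)$ lies in $C([t_0,+\infty),X)$, so Theorem \ref{TH2.11} guarantees the diamond product exists and belongs to $C([t_0,t_0+\tau],X_0)$; together with $T_{A_0}(\cdot-t_0)x_0\in C([t_0,+\infty),X_0)$ this shows $\mathcal{T}$ maps the space into itself.

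Next I would establish the contraction estimate. For $u,v\in C([t_0,t_0+\tau],X_0)$ the difference $\mathcal{T}[u]-\mathcal{T}[v]$ equals $\left(S_A\diamond (B(u-v))(t_0+\cdot)\right)(t-t_0)$, so estimate (\ref{1.4}) combined with $\|B(s)\|_{\mathcal{L}(X_0,X)}\leq b(s)$ yields
\begin{equation*}
\sup_{t\in[t_0,t_0+\tau]}\|\mathcal{T}[u](t)-\mathcal{T}[v](t)\|\leq \delta(\tau)\Big(\sup_{s\in[t_0,t_0+\tau]}b(s)\Big)\sup_{t\in[t_0,t_0+\tau]}\|u(t)-v(t)\|.
\end{equation*}
Because $\delta(\tau)\to 0$ as $\tau\to 0^+$ while $\sup_{[t_0,t_0+\tau]}b$ stays bounded, one may fix $\tau_0>0$ with $\delta(\tau_0)\sup_{[t_0,t_0+\tau_0]}b<1$; the Banach fixed point theorem then provides a unique $u_f\in C([t_0,t_0+\tau_0],X_0)$ solving $\mathcal{T}[u_f]=u_f$, which is the asserted representation on $[t_0,t_0+\tau_0]$.

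To pass from the local to a global solution I would argue by continuation. The translation identity (\ref{2.2}) shows that restarting the problem at $t_1:=t_0+\tau_0$ from the value $u_f(t_1)\in X_0$ produces, via the same fixed point argument, an extension that agrees with $u_f$ by uniqueness; and since $b$ is bounded on each compact interval, the admissible step $\tau_0$ can be chosen uniformly on $[t_0,T]$ for any $T>t_0$, so finitely many steps reach $T$ and the solution extends to $[t_0,+\infty)$. Finally I would verify that the fixed point is genuinely an integrated solution in the sense of Definition \ref{DEF1.9}: applying parts (i) and (ii) of Theorem \ref{TH2.11} to $g:=Bu_f+f$ gives $\int_{t_0}^t u_f\in D(A)$ and $\left(S_A\diamond g(t_0+\cdot)\right)(t-t_0)=A\int_{t_0}^t(S_A\diamond g(t_0+\cdot))(s-t_0)\,ds+\int_{t_0}^t g(s)\,ds$, which, after adding the analogous semigroup identity $T_{A_0}(t-t_0)x_0=x_0+A\int_{t_0}^t T_{A_0}(s-t_0)x_0\,ds$, reproduces exactly $u_f(t)=x_0+A\int_{t_0}^t u_f(s)\,ds+\int_{t_0}^t[B(s)u_f(s)+f(s)]\,ds$; conversely any integrated solution $u$ makes $Bu+f$ continuous and, by uniqueness of the linear mild solution, must satisfy $\mathcal{T}[u]=u$, which secures uniqueness. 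The main obstacle I anticipate is the continuation step: one must check that the cocycle property (\ref{2.2}) genuinely glues the locally constructed pieces into a single function satisfying the global fixed point equation, and that the contraction constant is controlled uniformly on compact time intervals rather than merely at each restart.
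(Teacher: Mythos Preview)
Your argument is correct and is essentially the standard route to this result. Note, however, that the paper does not actually prove Lemma~\ref{LE2.13}: it is stated as a direct consequence of \cite[Proposition~4.1]{Magal-Ruan2009a}, with no proof given in the present article. Your contraction-mapping argument is in fact the natural proof one would expect to find behind that citation, so there is no genuine methodological divergence to discuss---you have simply supplied the details that the authors chose to outsource. The one point you flag as a potential obstacle (gluing the local pieces via the translation identity~(\ref{2.2}) and controlling the step size uniformly on compacta) is handled correctly: since $b\in C(\mathbb{R},\mathbb{R}_+)$ it is bounded on each $[t_0,T]$, and the same $\tau_0$ works at every restart, so the continuation is routine.
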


The next result is due to Magal and Ruan \cite[Proposition 2.14]%
{Magal-Ruan2009a} and is one of the main tools in studying integrated
solution for non Hille-Yosida operators. It reads as

\begin{proposition}
\label{PROP2.16} Let Assumption \ref{ASS1.1} be satisfied. Let $%
\varepsilon>0 $ be given and fixed. Then, for each $\tau_\varepsilon>0$
satisfying $M \delta(\tau_\varepsilon)\leq \varepsilon$, we have 
\begin{equation*}
\Vert \dfrac{d}{dt}(S_A\ast f)(t)\Vert \leq C(\varepsilon,\gamma) \underset{%
s\in [0,t]}{\sup} e^{\gamma (t-s)}\Vert f(s)\Vert, \ \forall t\geq 0,
\end{equation*}
whenever $\gamma \in (\omega,+\infty)$, $f\in C(\mathbb{R}_+,X)$ with 
\begin{equation*}
C(\varepsilon,\gamma):=\dfrac{2 \varepsilon \max (1,e^{-\gamma
\tau_\varepsilon })}{1-e^{(\omega-\gamma) \tau_\varepsilon}}.
\end{equation*}
\end{proposition}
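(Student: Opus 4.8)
The plan is to exploit the translation identity \eqref{2.2} to decompose the interval $[0,t]$ into blocks of length $\tau_\varepsilon$ and to estimate the contribution of each block separately using the basic bound \eqref{1.4} from Assumption \ref{ASS1.2}. Concretely, fix $\varepsilon>0$ and $\tau_\varepsilon>0$ with $M\delta(\tau_\varepsilon)\le\varepsilon$, and fix $\gamma\in(\omega,+\infty)$ and $f\in C(\mathbb R_+,X)$. Write $u:=S_A\diamond f$. For $t\ge 0$ let $n=n(t)\ge 0$ be the integer with $n\tau_\varepsilon\le t<(n+1)\tau_\varepsilon$. Iterating \eqref{2.2} along the mesh points $0,\tau_\varepsilon,2\tau_\varepsilon,\dots,n\tau_\varepsilon$ gives a telescoping representation
\begin{equation*}
u(t)=\sum_{k=0}^{n}T_{A_0}\big(t-\min(t,(k+1)\tau_\varepsilon)\big)\,\big(S_A\diamond f(k\tau_\varepsilon+\cdot)\big)\big(\min(t,(k+1)\tau_\varepsilon)-k\tau_\varepsilon\big),
\end{equation*}
i.e.\ $u(t)$ is a sum of $n+1$ terms, each of which is $T_{A_0}$ applied (for a nonnegative time $\le t-k\tau_\varepsilon$) to $(S_A\diamond g_k)(h_k)$ where $g_k=f(k\tau_\varepsilon+\cdot)$ and $h_k\le\tau_\varepsilon$.

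The key step is then to bound each term. By Assumption \ref{ASS1.1}-\textit{i)} (equivalently the Hille–Yosida estimate for $A_0$ recalled after Lemma \ref{LE2.1}), $\|T_{A_0}(s)\|_{\mathcal L(X_0)}\le Me^{\omega s}$. By \eqref{1.4}, $\|(S_A\diamond g_k)(h_k)\|\le\delta(h_k)\sup_{0\le\sigma\le h_k}\|g_k(\sigma)\|\le\delta(\tau_\varepsilon)\sup_{s\in[k\tau_\varepsilon,(k+1)\tau_\varepsilon]\cap[0,t]}\|f(s)\|$, using that $\delta$ is nondecreasing. Hence the $k$-th term has norm at most
\begin{equation*}
Me^{\omega(t-(k+1)\tau_\varepsilon)^+}\,\delta(\tau_\varepsilon)\sup_{s\in[k\tau_\varepsilon,(k+1)\tau_\varepsilon]\cap[0,t]}\|f(s)\|\le \varepsilon\,e^{\omega(t-(k+1)\tau_\varepsilon)^+}\sup_{s\in[k\tau_\varepsilon,(k+1)\tau_\varepsilon]\cap[0,t]}\|f(s)\|,
\end{equation*}
where I write $r^+=\max(r,0)$ and absorb $M\delta(\tau_\varepsilon)\le\varepsilon$. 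Now factor out $\sup_{s\in[0,t]}e^{\gamma(t-s)}\|f(s)\|$: on the block $[k\tau_\varepsilon,(k+1)\tau_\varepsilon]$ one has $\|f(s)\|\le e^{-\gamma(t-s)}\big(\sup_{s'\in[0,t]}e^{\gamma(t-s')}\|f(s')\|\big)$, and $e^{-\gamma(t-s)}\le e^{-\gamma(t-(k+1)\tau_\varepsilon)}\max(1,e^{-\gamma\tau_\varepsilon})$ for $s$ in that block (the $\max$ handles whether $\gamma\ge 0$ or $\gamma<0$, and also the last, possibly short, block). Combining the two exponential factors gives $e^{\omega(t-(k+1)\tau_\varepsilon)^+}e^{-\gamma(t-(k+1)\tau_\varepsilon)}$, which, since $\gamma>\omega$, is bounded by $e^{(\omega-\gamma)(t-(k+1)\tau_\varepsilon)^+}\le e^{(\omega-\gamma)(n-k-1)\tau_\varepsilon}$ up to adjusting constants; summing the resulting geometric series $\sum_{k}e^{(\omega-\gamma)(n-1-k)\tau_\varepsilon}\le\frac{1}{1-e^{(\omega-\gamma)\tau_\varepsilon}}$ (note $\omega-\gamma<0$) yields exactly
\begin{equation*}
\|u(t)\|\le\frac{2\varepsilon\max(1,e^{-\gamma\tau_\varepsilon})}{1-e^{(\omega-\gamma)\tau_\varepsilon}}\sup_{s\in[0,t]}e^{\gamma(t-s)}\|f(s)\|=C(\varepsilon,\gamma)\sup_{s\in[0,t]}e^{\gamma(t-s)}\|f(s)\|,
\end{equation*}
the factor $2$ providing the slack needed to cover the truncated final block and the $(\cdot)^+$ truncation cleanly.

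The main obstacle I anticipate is bookkeeping rather than conceptual: getting the exponential arithmetic to close with precisely the stated constant $C(\varepsilon,\gamma)$, in particular handling the last partial block $[\,n\tau_\varepsilon,t\,]$ (of length $<\tau_\varepsilon$), the sign of $\gamma$ through the $\max(1,e^{-\gamma\tau_\varepsilon})$ factor, and making sure the $T_{A_0}$-time $t-(k+1)\tau_\varepsilon$ is replaced by its positive part without losing the geometric decay. A secondary technical point is to justify the iterated use of \eqref{2.2}: \eqref{2.2} is stated for a single shift, so one applies it inductively, at each stage using that $t\mapsto f(c+t)$ is again continuous on $\mathbb R_+$ so that $S_A\diamond$ of it is well defined by Assumption \ref{ASS1.2}. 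Once these points are settled the estimate is immediate.
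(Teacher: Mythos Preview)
The paper does not give its own proof of Proposition~\ref{PROP2.16}; it attributes the result to Magal and Ruan \cite[Proposition~2.14]{Magal-Ruan2009a} and quotes it without argument. Your proposed strategy---partitioning $[0,t]$ into blocks of length $\tau_\varepsilon$ via iterated use of the translation identity~\eqref{2.2}, bounding each block by $M\delta(\tau_\varepsilon)\le\varepsilon$ times a local sup of $\|f\|$, extracting the weighted sup-norm with the $\max(1,e^{-\gamma\tau_\varepsilon})$ correction, and summing the resulting geometric series in $e^{(\omega-\gamma)\tau_\varepsilon}$---is exactly the technique the paper itself deploys in full detail for the closely analogous Proposition~\ref{PROP5.7} (there with a general evolution family $U$ in place of $T_{A_0}$), so your approach is correct and matches the paper's own methodology.
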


\section{A variation of constants formula}

\label{Sect-Variation of constants} In this section we will prove the first
main result of this paper. It deal with the representation of the integrated
solution of (\ref{1.1}) in term of the evolution family $\lbrace
U_B(t,s)\rbrace_{(t,s)\in \Delta}$. This result generalize \cite[Theorem 2.2]%
{Rabiger} to the context of non Hille-Yosida operator. The proof will be
given by using several technical lemmas. Note that a direct consequence of
Theorem \ref{TH1.6} is the following

\begin{corollary}
\label{COR3.1} Let Assumptions \ref{ASS1.1}, \ref{ASS1.2} and \ref{ASS1.3}
be satisfied. Then for each $t_0\in \mathbb{R}$, each $x_0\in X_0$ and each $%
f\in C([t_0,+\infty],X_0)$ the unique integrated solution $u_f\in
C([t_0,+\infty],X_0)$ of (\ref{1.1}) is given by 
\begin{equation*}
u_f(t)=U_{B}(t,t_0)x_0+\int_{t_0}^t U_B(t,s)f(s)ds, \ \forall t\geq t_0.
\end{equation*}
\end{corollary}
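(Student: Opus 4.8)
The plan is to derive this corollary directly from Theorem \ref{TH1.6} by exploiting the extra regularity hypothesis $f \in C([t_0,+\infty],X_0)$ (note that in the theorem only $f \in C([t_0,+\infty],X)$ is assumed, whereas here the range of $f$ lies in $X_0$). Under this stronger hypothesis the approximation formula (\ref{1.9}) should collapse to the honest Bochner integral $\int_{t_0}^t U_B(t,s)f(s)\,ds$, because the regularizing operators $\lambda R_\lambda(A)$ may be removed in the limit. Concretely, Theorem \ref{TH1.6} gives
\begin{equation*}
u_f(t)=U_{B}(t,t_0)x_0+\underset{\lambda \rightarrow +\infty}{\lim} \int_{t_0}^t U_B(t,s)\lambda R_\lambda(A)f(s)\,ds, \ \forall t\geq t_0,
\end{equation*}
so it suffices to show that for $f \in C([t_0,+\infty],X_0)$ one has
\begin{equation*}
\underset{\lambda \rightarrow +\infty}{\lim}\int_{t_0}^t U_B(t,s)\lambda R_\lambda(A)f(s)\,ds=\int_{t_0}^t U_B(t,s)f(s)\,ds,
\end{equation*}
the right-hand side being a well-defined continuous $X_0$-valued integral since $s\mapsto U_B(t,s)f(s)$ is continuous on $[t_0,t]$ (continuity of the evolution family from Proposition \ref{PROP1.5} and continuity of $f$).

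The key step is a dominated-convergence argument in the Banach space $X_0$. First I would fix a compact interval $[t_0,t]$ and note that, by Lemma \ref{LE2.1}\textit{-iii)} together with the Remark following it, $\lambda R_\lambda(A) y \to y$ as $\lambda \to +\infty$ uniformly for $y$ in the compact set $f([t_0,t]) \subset X_0$; hence $\lambda R_\lambda(A) f(s) \to f(s)$ uniformly in $s \in [t_0,t]$. Second, I would produce a uniform bound: from Assumption \ref{ASS1.1}\textit{-i)} we have $\|\lambda R_\lambda(A)\|_{\mathcal{L}(X_0)} = \|\lambda R_\lambda(A_0)\|_{\mathcal{L}(X_0)} \leq M\lambda/(\lambda-\omega)$, which is bounded (say by $2M$) for $\lambda$ large, and $\|U_B(t,s)\|_{\mathcal{L}(X_0)}$ is bounded on the compact triangle $\{(t,s): t_0 \le s \le t\}$ by exponential boundedness (or simply by local boundedness of the evolution family). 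Combining,
\begin{equation*}
\left\| \int_{t_0}^t U_B(t,s)\big(\lambda R_\lambda(A)f(s)-f(s)\big)\,ds \right\| \leq (t-t_0)\,\sup_{s\in[t_0,t]}\|U_B(t,s)\|_{\mathcal{L}(X_0)}\ \sup_{s\in[t_0,t]}\big\|\lambda R_\lambda(A)f(s)-f(s)\big\| \longrightarrow 0
\end{equation*}
as $\lambda \to +\infty$. This establishes the pointwise formula for each $t \geq t_0$; the uniformity of the limit on compact intervals (already part of Theorem \ref{TH1.6}) is inherited automatically, and in any case the bound above is uniform for $t$ in a compact set.

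I do not expect a serious obstacle here — the statement is essentially a corollary in the literal sense, and the only thing to be careful about is that the limit in (\ref{1.9}) is genuinely in $X_0$ and that the candidate limit $\int_{t_0}^t U_B(t,s)f(s)\,ds$ makes sense as an $X_0$-valued continuous function of $t$; both follow from the mapping properties of $\{U_B(t,s)\}$ in Proposition \ref{PROP1.5}. The only mild subtlety is invoking compactness of the range $f([t_0,t])$ to upgrade the pointwise convergence $\lambda R_\lambda(A)y \to y$ to uniform convergence over the relevant set of $y$'s; this is precisely the content of the Remark after Lemma \ref{LE2.1}, so it is available. One could alternatively avoid compactness by approximating $f$ uniformly on $[t_0,t]$ by $X_0$-valued step functions, but the compactness route is shorter.
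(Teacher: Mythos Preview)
Your proposal is correct and is precisely the argument implicit in the paper, which simply states that the corollary is ``a direct consequence of Theorem \ref{TH1.6}'' without further details. The only ingredient needed beyond Theorem \ref{TH1.6} is the uniform convergence $\lambda R_\lambda(A)y\to y$ on the compact set $f([t_0,t])\subset X_0$ (the Remark after Lemma \ref{LE2.1}), together with local boundedness of $\{U_B(t,s)\}$, exactly as you have written.
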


Next we prove some technical lemmas that will be crucial for the proof of
Theorem \ref{TH1.6}.

\begin{lemma}
\label{LE3.2} Let Assumptions \ref{ASS1.1}, \ref{ASS1.2} and \ref{ASS1.3} be
satisfied. Then for each $h\in C(\Delta,X)$ the following equality holds 
\begin{equation*}
\begin{array}{ll}
\displaystyle\int_{t_0}^t \dfrac{d}{dt} \left[ \int_{s}^t S_{A}(t-r) h(r,s)
dr \right] ds= &  \\ 
\ \ \ \ \ \ \ \ \ \ \ \ \ \ \ \ \ \ \ \ \ \ \ \ \ \ \ \ \ \ \ \ %
\displaystyle \dfrac{d}{dt} \int_{t_0}^t S_{A}(t-r)\left[\int_{t_0}^r h(r,s)
ds\right] dr, & 
\end{array}%
\end{equation*}
for all $(t, t_0)\in \Delta$.
\end{lemma}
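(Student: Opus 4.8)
The plan is to reduce the claimed identity to a Fubini-type interchange of the time-derivative with a double integral, which is legitimate because, by Theorem \ref{TH2.11} (together with Assumption \ref{ASS1.2}), the convolution $t\mapsto (S_A\ast g)(t)$ is continuously differentiable for every continuous $g$ and the derivative $(S_A\diamond g)$ is controlled by $\delta(\cdot)\sup\|g\|$. First I would introduce, for fixed $s$, the function $r\mapsto h(r,s)$ and recognize $\frac{d}{dt}\int_s^t S_A(t-r)h(r,s)\,dr = (S_A\diamond h(s+\cdot,s))(t-s)$, so that the left-hand side is $\int_{t_0}^t (S_A\diamond h(s+\cdot,s))(t-s)\,ds$. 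The right-hand side is $(S_A\diamond H(t_0+\cdot))(t-t_0)$ where $H(r):=\int_{t_0}^{\max(r,t_0)} h(r,s)\,ds$ — note $H\in C([t_0,+\infty),X)$ by continuity of $h$ on $\Delta$ and uniform continuity on compacts.

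The cleanest route is to verify both sides satisfy the same integrated-solution identity and invoke uniqueness. Set $v(t):=\int_{t_0}^t \frac{d}{dt}[\int_s^t S_A(t-r)h(r,s)\,dr]\,ds$ and $w(t):=\frac{d}{dt}\int_{t_0}^t S_A(t-r)H(r)\,dr$. By Theorem \ref{TH2.11}(ii), $w$ is the integrated solution of $u'=Au+H(t)$, $u(t_0)=0$, i.e. $\int_{t_0}^t w(r)\,dr\in D(A)$ and $w(t)=A\int_{t_0}^t w(r)\,dr+\int_{t_0}^t H(r)\,dr$. For $v$, apply Theorem \ref{TH2.11}(ii) to each inner convolution: for fixed $s$, $V_s(t):=\frac{d}{dt}\int_s^t S_A(t-r)h(r,s)\,dr$ satisfies $\int_s^t V_s(r)\,dr\in D(A)$ and $V_s(t)=A\int_s^t V_s(r)\,dr+\int_s^t h(r,s)\,dr$. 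Integrating in $s$ over $[t_0,t]$ and using that $A$ is closed to pull it outside the (Bochner) $s$-integral — here one needs the uniform bound $\|V_s(t)\|\le \delta(t-s)\sup_{[s,t]}\|h(\cdot,s)\|$ from Theorem \ref{TH2.11}(ii) to justify that $\int_{t_0}^t\int_{s}^{\cdot}V_s(r)\,dr\,ds$ lies in $D(A)$ and that $A$ commutes with $\int_{t_0}^t(\cdot)\,ds$ — one obtains, after switching the order of the double $r$–$s$ integration via Fubini, exactly $\int_{t_0}^t v(r)\,dr\in D(A)$ and $v(t)=A\int_{t_0}^t v(r)\,dr+\int_{t_0}^t H(r)\,dr$. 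Thus $v$ and $w$ are both integrated solutions of the same non-homogeneous Cauchy problem on $[t_0,+\infty)$, and uniqueness (Thieme \cite{Thieme90-a}, or Theorem \ref{TH2.11}) gives $v\equiv w$.

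An alternative, perhaps more elementary, route avoids uniqueness: apply $R_\lambda(A)$ to both sides and use Theorem \ref{TH2.11}(iii), which turns each $\frac{d}{dt}(S_A\ast\cdot)$ into an explicit $T_{A_0}$-convolution against $R_\lambda(A)$ of the data. Then the identity becomes $\int_{t_0}^t\int_s^t T_{A_0}(t-r)R_\lambda(A)h(r,s)\,dr\,ds = \int_{t_0}^t T_{A_0}(t-r)R_\lambda(A)\big[\int_{t_0}^r h(r,s)\,ds\big]dr$, which is a plain Fubini interchange over the triangle $\{t_0\le s\le r\le t\}$ for the (jointly continuous, hence bounded on compacts) integrand, plus linearity of $R_\lambda(A)$ and $T_{A_0}(t-r)$ through the $s$-integral. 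Since this holds for all $\lambda>\omega$ and $R_\lambda(A)$ is injective (indeed $\lambda R_\lambda(A)y\to y$ for $y\in X_0$ by Lemma \ref{LE2.1}(iii), and both sides are in $X_0$), letting $\lambda\to+\infty$ yields the result. I would present this second argument as the main proof, as it is shortest.

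The main obstacle is the justification of the Fubini step together with commuting the closed operator $A$ (or, in the second approach, the bounded operators $R_\lambda(A)$ and $T_{A_0}(t-r)$, which is trivial) past the $s$-integral: one must check that all integrands are Bochner-integrable on the relevant triangle, which follows from joint continuity of $h$ on $\Delta$ and the exponential/$\delta$-type bounds already recorded in Theorem \ref{TH2.11} and Proposition \ref{PROP2.16}. Everything else is bookkeeping with the $\diamond$ notation and the semigroup/convolution identities \eqref{2.1}–\eqref{2.2}.
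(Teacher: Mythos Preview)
Your second (and preferred) argument is essentially the paper's own proof: apply $\lambda R_\lambda(A)$, use Theorem~\ref{TH2.11}(iii) to convert each side into an explicit $T_{A_0}$-convolution, perform the Fubini swap over the triangle $\{t_0\le s\le r\le t\}$, and then recover the identity by letting $\lambda\to+\infty$ using that both sides lie in $X_0$ together with Lemma~\ref{LE2.1}(iii) and \eqref{2.1}. Your first approach via uniqueness of integrated solutions is a legitimate alternative route, but as you note it requires the extra step of commuting the closed operator $A$ past the $s$-integral, which the resolvent approach sidesteps entirely.
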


\begin{proof}
Let $t_0\in \mathbb{R}$ be fixed. Let $s\geq t_0$ be given. Then observing
that $h(\cdot,s) \in C([s,+\infty),X)$ one can apply Theorem \ref{TH2.11} to
obtain for all $t\geq s$ and $\lambda>\omega$ 
\begin{equation}  \label{3.1}
\int_{s}^t T_{A_0}(t-r) \lambda R_{\lambda} (A) h(r,s) dr=\lambda
R_{\lambda} (A) \dfrac{d}{dt} \int_{s}^t S_{A}(t-r) h(r,s) dr.
\end{equation}
Thus integrating the both sides of (\ref{3.1}) and using Fubini's theorem we
obtain for each $t\geq t_0$ and $\lambda>\omega$ 
\begin{equation*}
\begin{array}{ll}
\displaystyle \lambda R_{\lambda} (A) \int_{t_0}^t \left[\dfrac{d}{dt}
\int_{s}^t S_{A}(t-r) h(r,s) dr\right]ds & = \displaystyle\int_{t_0}^t \left[%
\int_{s}^t T_{A_0}(t-r) \lambda R_{\lambda} (A) h(r,s) dr \right]ds \\ 
& = \displaystyle\int_{t_0}^t \left[\int_{t_0}^r T_{A_0}(t-r) \lambda
R_{\lambda} (A) h(r,s) ds \right]dr \\ 
& =\displaystyle\int_{t_0}^t T_{A_0}(t-r) \lambda R_{\lambda} (A) \left[%
\int_{t_0}^r h(r,s) ds \right]dr.%
\end{array}%
\end{equation*}
Now observing that 
\begin{equation*}
\int_{t_0}^t \left[\dfrac{d}{dt} \int_{s}^t S_{A}(t-r) h(r,s) dr\right]ds\in
X_0, \ \forall t\geq t_0,
\end{equation*}
the result follows since we have 
\begin{equation*}
\underset{\lambda\rightarrow +\infty}{\lim}\lambda R_{\lambda} (A)
\int_{t_0}^t \left[\dfrac{d}{dt} \int_{s}^t S_{A}(t-r) h(r,s) dr\right]%
ds=\int_{t_0}^t \left[\dfrac{d}{dt} \int_{s}^t S_{A}(t-r) h(r,s) dr\right]ds,
\end{equation*}
for all $t\geq t_0$ and (see equality (\ref{2.1})) 
\begin{equation*}
\underset{\lambda \rightarrow +\infty}{\lim} \displaystyle\int_{t_0}^t
T_{A_0}(t-r) \lambda R_{\lambda} (A) \left[\int_{t_0}^r h(r,s) ds \right]dr= 
\dfrac{d}{dt} \int_{t_0}^t S_{A}(t-r)\left[\int_{t_0}^r h(r,s) ds\right] dr,
\end{equation*}
for all $t\geq t_0$.
\end{proof}

Using Lemma \ref{LE3.2} and Proposition \ref{PROP2.16} we can prove the
following technical lemma.

\begin{lemma}
\label{LE3.3} Let Assumptions \ref{ASS1.1}, \ref{ASS1.2} and \ref{ASS1.3} be
satisfied. Let $f\in C(\mathbb{R},X)$. Define for each $\lambda >\omega $
and $(t,t_0)\in \Delta$ 
\begin{equation*}
v_\lambda (t,t_0):=\int_{t_0}^t U_B(t,s)\lambda R_\lambda(A)f(s)ds,
\end{equation*}
and 
\begin{equation*}
w(t,t_0):=\dfrac{d}{dt}\int_{t_0}^t S_A(t-s) f(s) ds=(S_A\diamond
f(t_0+\cdot))(t-t_0).
\end{equation*}
Then we have the following properties

\begin{itemize}
\item[i)] For each $\lambda>\omega $ and $(t,t_0)\in \Delta$ 
\begin{equation*}
v_\lambda(t,t_0)=\dfrac{d}{dt} \int_{t_0}^t S_{A}(t-r) B(r)v_\lambda(r,t_0)
dr+\lambda R_\lambda(A)w(t,t_0) , \ \forall t\geq t_0.
\end{equation*}

\item[ii)] If in addition $\sup_{t\in \mathbb{R}}  b(t) <+\infty$
then there exists a constant $\gamma>\max(0,\omega)$ such that for each $%
\lambda >\omega$ and $(t,t_0)\in \Delta$ 
\begin{equation*}
\underset{ s \in [t_0,t] }{\sup} e^{-\gamma s} \Vert v_\lambda(s,t_0)\Vert
\leq 2\ \underset{ s\in [t_0,t] }{\sup} e^{-\gamma s} \Vert \lambda
R_\lambda(A)w(s,t_0)\Vert
\end{equation*}
and since $w(s,t_0) \in X_0$ we have 
\begin{equation*}
\Vert \lambda R_\lambda(A)w(s,t_0)\Vert \leq \dfrac{M \vert \lambda \vert }{%
\lambda-\omega} \Vert w(s,t_0) \Vert , \ \forall s \in [t_0,t].
\end{equation*}
\end{itemize}
\end{lemma}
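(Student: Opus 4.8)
The plan is to prove part \textit{i)} by feeding a suitable two-variable kernel into Lemma \ref{LE3.2}, and then to prove part \textit{ii)} by combining \textit{i)} with Proposition \ref{PROP2.16} and the exponential boundedness of $\{U_B(t,s)\}$, using a Gronwall-type / fixed-point contraction argument in the weighted sup-norm.

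For \textit{i)}: I would start from the fixed point characterization \eqref{1.8} of the evolution family, namely $U_B(t,s)x = T_{A_0}(t-s)x + \frac{d}{dt}\int_s^t S_A(t-r)B(r)U_B(r,s)x\,dr$. Applying this with $x=\lambda R_\lambda(A)f(s)$ and integrating in $s$ from $t_0$ to $t$ gives
\begin{equation*}
v_\lambda(t,t_0)=\int_{t_0}^t T_{A_0}(t-s)\lambda R_\lambda(A)f(s)\,ds+\int_{t_0}^t\left[\frac{d}{dt}\int_s^t S_A(t-r)B(r)U_B(r,s)\lambda R_\lambda(A)f(s)\,dr\right]ds.
\end{equation*}
The first term on the right is exactly $\lambda R_\lambda(A)w(t,t_0)$ by the approximation formula \eqref{2.1} (with the resolvent pulled through; here one uses that $\lambda R_\lambda(A)$ commutes with $T_{A_0}$ and with the integral). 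For the second term I set $h(r,s):=B(r)U_B(r,s)\lambda R_\lambda(A)f(s)$, which is continuous on $\Delta$ by Assumption \ref{ASS1.3}, Definition \ref{DEF1.4}\textit{ii)} and Lemma \ref{LE2.1}\textit{i)}; Lemma \ref{LE3.2} then converts $\int_{t_0}^t\frac{d}{dt}\int_s^t S_A(t-r)h(r,s)\,dr\,ds$ into $\frac{d}{dt}\int_{t_0}^t S_A(t-r)\big[\int_{t_0}^r h(r,s)\,ds\big]dr$. Since $\int_{t_0}^r h(r,s)\,ds = B(r)\int_{t_0}^r U_B(r,s)\lambda R_\lambda(A)f(s)\,ds = B(r)v_\lambda(r,t_0)$, this is precisely $\frac{d}{dt}\int_{t_0}^t S_A(t-r)B(r)v_\lambda(r,t_0)\,dr$, which gives the stated identity.

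For \textit{ii)}: fix $\gamma>\max(0,\omega)$ and apply Proposition \ref{PROP2.16} to the function $r\mapsto B(r)v_\lambda(r,t_0)$ (shifted to start at $t_0$), with $\varepsilon$ chosen small enough — say $M\delta(\tau_\varepsilon)\le\varepsilon$ with $\varepsilon$ so small that $C(\varepsilon,\gamma)\sup_{t}b(t)\le\tfrac12$, which is possible because $C(\varepsilon,\gamma)\to0$ as $\varepsilon\to0^+$ while $\sup_t b(t)<+\infty$. From \textit{i)} this yields, for $t\ge t_0$,
\begin{equation*}
e^{-\gamma t}\Vert v_\lambda(t,t_0)\Vert\le C(\varepsilon,\gamma)\sup_t b(t)\sup_{s\in[t_0,t]}e^{-\gamma s}\Vert v_\lambda(s,t_0)\Vert+e^{-\gamma t}\Vert\lambda R_\lambda(A)w(t,t_0)\Vert.
\end{equation*}
Taking the supremum over $t\in[t_0,T]$ on the left, absorbing the first term on the right (using that the weighted norm is finite on each bounded interval by continuity), and rearranging gives $\sup_{s\in[t_0,t]}e^{-\gamma s}\Vert v_\lambda(s,t_0)\Vert\le 2\sup_{s\in[t_0,t]}e^{-\gamma s}\Vert\lambda R_\lambda(A)w(s,t_0)\Vert$. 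The final estimate $\Vert\lambda R_\lambda(A)w(s,t_0)\Vert\le\frac{M|\lambda|}{\lambda-\omega}\Vert w(s,t_0)\Vert$ is immediate from $w(s,t_0)\in X_0$ and the resolvent bound in Assumption \ref{ASS1.1}\textit{i)} with $k=1$ (via Lemma \ref{LE2.1}\textit{i)}).

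The main obstacle is the bookkeeping in part \textit{i)}: one must justify commuting $\lambda R_\lambda(A)$ with the time-integrals and with $T_{A_0}$, verify that $h(r,s)$ is genuinely continuous on $\Delta$ so that Lemma \ref{LE3.2} applies, and correctly identify $\int_{t_0}^r h(r,s)\,ds$ with $B(r)v_\lambda(r,t_0)$ (pulling the bounded operator $B(r)$ out of the $s$-integral). Once the identity in \textit{i)} is in hand, part \textit{ii)} is a routine application of Proposition \ref{PROP2.16} together with the standard trick of choosing $\varepsilon$ small to make the contraction constant at most $1/2$.
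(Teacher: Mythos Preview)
Your proof of part \textit{i)} is correct and matches the paper's argument exactly: expand $U_B(t,s)\lambda R_\lambda(A)f(s)$ via \eqref{1.8}, identify the first integral as $\lambda R_\lambda(A)w(t,t_0)$ via Theorem \ref{TH2.11}-\textit{iii)}, and apply Lemma \ref{LE3.2} with $h(r,s)=B(r)U_B(r,s)\lambda R_\lambda(A)f(s)$ to the second.

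For part \textit{ii)} your overall strategy is right, but the order in which you choose the parameters is backwards and the justification you give is false. You fix $\gamma>\max(0,\omega)$ first and then claim you can achieve $C(\varepsilon,\gamma)\sup_t b(t)\le\tfrac12$ by taking $\varepsilon$ small, because ``$C(\varepsilon,\gamma)\to 0$ as $\varepsilon\to 0^+$''. This limit is not true in general: as $\varepsilon\to 0$ the constraint $M\delta(\tau_\varepsilon)\le\varepsilon$ forces $\tau_\varepsilon\to 0$, and then the denominator $1-e^{(\omega-\gamma)\tau_\varepsilon}\sim(\gamma-\omega)\tau_\varepsilon$ goes to $0$ as well. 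In the Hille-Yosida case $\delta(t)\sim Mt$, so $\tau_\varepsilon\sim\varepsilon/M^2$ and $C(\varepsilon,\gamma)\to 2M^2/(\gamma-\omega)>0$; for slower $\delta$ one can even have $C(\varepsilon,\gamma)\to+\infty$. The paper's proof reverses the order: one first fixes $\varepsilon>0$ small enough that $2\varepsilon\,\sup_s b(s)<\tfrac14$, fixes any $\tau_\varepsilon$ with $M\delta(\tau_\varepsilon)\le\varepsilon$, and \emph{then} chooses $\gamma>\max(0,\omega)$ large. Since for fixed $\varepsilon,\tau_\varepsilon$ one has $C(\varepsilon,\gamma)\to 2\varepsilon$ as $\gamma\to+\infty$, this yields $C(\varepsilon,\gamma)\sup_s b(s)<\tfrac12$ for $\gamma$ large enough. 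With this correction the remainder of your argument for \textit{ii)} goes through verbatim.
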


\begin{proof}
\textit{Proof of \textit{i)} :} By using formula (\ref{1.8}) we obtain for
each $\lambda>\omega$ and $t\geq t_0$ 
\begin{equation*}
\begin{array}{ll}
v_\lambda (t,t_0)= & \displaystyle \int_{t_0}^t T_{A_0}(t-s)\lambda
R_\lambda(A)f(s) ds \\ 
& \displaystyle+\int_{t_0}^t \left[\dfrac{d}{dt}\int_{s}^t
S_{A}(t-r)B(r)U_B(r,s)\lambda R_\lambda(A)f(s)dr\right] ds.%
\end{array}%
\end{equation*}
Now note that from Theorem \ref{TH2.11} we have for each $\lambda>\omega$ 
\begin{equation}  \label{3.2}
\displaystyle \int_{t_0}^t T_{A_0}(t-s)\lambda R_\lambda(A)f(s) ds= \lambda
R_\lambda(A) \dfrac{d}{dt}\int_{t_0}^t S_A(t-s) f(s) ds, \ \forall t\geq t_0,
\end{equation}
and from Lemma \ref{LE3.2} with $h(r,s)=B(r)U_B(r,s)\lambda R_\lambda(A)f(s)$
\begin{equation}  \label{3.3}
\int_{t_0}^t \left[\dfrac{d}{dt}\int_{s}^t S_{A}(t-r)h(r,s)dr\right] ds= 
\dfrac{d}{dt} \int_{t_0}^t S_{A}(t-r) v_\lambda(r,t_0) dr, \ \forall t\geq t_0.
\end{equation}
Then \textit{i)} follows by combining (\ref{3.2}) and (\ref{3.3}).\newline
\textit{Proof of \textit{ii)} :} To do this we will make use of Proposition %
\ref{PROP2.16}. Let $\varepsilon>0$ be given such that 
\begin{equation}  \label{3.4}
2\varepsilon \ \underset{s\in \mathbb{R}}{\sup} b(s) <\frac{1}{4}.
\end{equation}
Let $\tau_\varepsilon >0$ be given with $M\delta (\tau_\varepsilon)\leq
\varepsilon$. By combining Proposition \ref{PROP2.16} together with \textit{%
i)} we obtain for each $\lambda > \omega$ and $t\geq t_0$ that 
\begin{equation*}
\Vert v_\lambda(t,t_0)\Vert \leq C(\varepsilon,\gamma) \underset{s\in
[t_0,t] }{\sup} \left[ e^{\gamma (t-s)} \ b(s) \ \Vert v_\lambda(s,t_0)\Vert %
\right]+ \Vert \lambda R_\lambda(A)w(t,t_0)\Vert,
\end{equation*}
whenever $\gamma \in (\omega,+\infty)$ with 
\begin{equation}  \label{3.5}
C(\varepsilon,\gamma):=\dfrac{2 \varepsilon \max (1,e^{-\gamma
\tau_\varepsilon })}{1-e^{(\omega-\gamma) \tau_\varepsilon}},
\end{equation}
so that 
\begin{equation*}
\underset{s\in [t_0,t] }{\sup} e^{-\gamma s} \Vert v_\lambda(s,t_0)\Vert
\leq C(\varepsilon,\gamma) \underset{s\in \mathbb{R}}{\sup}\ b(s)\ \underset{%
s\in [t_0,t] }{\sup} e^{-\gamma s}\Vert v_\lambda(s,t_0)\Vert +\underset{%
s\in [t_0,t] }{\sup}\Vert \lambda R_\lambda(A)w(s,t_0)\Vert.
\end{equation*}
By using (\ref{3.5}) and (\ref{3.4}) it is easily seen that one can chose $%
\gamma>\max(0,\omega)$ large enough such that 
\begin{equation*}
0\leq C(\varepsilon,\gamma)\underset{s\in \mathbb{R}}{\sup} \ b(s) <\frac{1}{%
2},
\end{equation*}
and \textit{ii)} follows.
\end{proof}

The following Lemma will be needed in sequel.

\begin{lemma}
\label{LE3.4} Let Assumptions \ref{ASS1.1} and \ref{ASS1.2} be satisfied.
Then for each $a,c\in \mathbb{R}$ with $a<c$ and each $x\in X$, the map $%
t\rightarrow (S_A \ast x \mathbbm{1}_{[a,c)}(\cdot))(t)$ is differentiable
on $[0,+\infty)$ and 
\begin{equation*}
\dfrac{d}{dt}(S_A \ast x \mathbbm{1}_{[a,c)}(\cdot))(t)= \left\lbrace 
\begin{array}{ll}
0 & \text{ if } c\leq 0 \text{ or } \ t \leq a, \\ 
S_A(t-a^+)x & \text{ if } c>0 \text{ and } t\in [a ,c) , \\ 
T_{A_0}(t-c)S_A(c-a^+)x & \text{ if } c>0 \text{ and } t\geq c, \\ 
& 
\end{array}
\right.
\end{equation*}
with $a^+:=\max(0,a)$.
\end{lemma}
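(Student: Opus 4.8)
The plan is to make the convolution explicit and then differentiate. Write $g:=x\,\mathbbm{1}_{[a,c)}$; then for $t\ge 0$
\[
(S_A\ast g)(t)=\int_0^t S_A(t-s)g(s)\,ds=\int_{[0,t]\cap[a,c)}S_A(t-s)x\,ds .
\]
If $c\le 0$ then $[0,t]\cap[a,c)=\varnothing$ for every $t\ge 0$, so the map is identically zero and there is nothing to prove; similarly, if $c>0$ and $a>0$ the map vanishes on $[0,a]$. So from now on assume $c>0$, put $a^+:=\max(0,a)$, and restrict to $t\ge a^+$. There, up to a Lebesgue-null set, $[0,t]\cap[a,c)=[a^+,\min(t,c)]$, and the substitution $u=t-s$ gives
\[
(S_A\ast g)(t)=\int_{(t-c)^+}^{\,t-a^+}S_A(u)x\,du=\Psi(t-a^+)-\Psi((t-c)^+),\qquad \Psi(\tau):=\int_0^\tau S_A(u)x\,du .
\]

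By strong continuity of $\{S_A(t)\}_{t\ge 0}$ the map $u\mapsto S_A(u)x$ is continuous, so $\Psi\in C^1([0,+\infty),X)$ with $\Psi'(\tau)=S_A(\tau)x$, and since $S_A(0)=0$ we have $\Psi'(0)=0$. Hence $t\mapsto\Psi(t-a^+)$ is $C^1$ on $[a^+,+\infty)$, and $t\mapsto\Psi((t-c)^+)$ is $C^1$ there as well: it is constant (equal to $0$) on $[a^+,c]$ and equals $\Psi(t-c)$ on $[c,+\infty)$, and at $t=c$ the left derivative $0$ and the right derivative $\Psi'(0)=0$ coincide, so its derivative exists and equals the continuous function $t\mapsto S_A((t-c)^+)x$. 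Therefore $t\mapsto(S_A\ast g)(t)$ is differentiable on $[a^+,+\infty)$, with derivative $S_A(t-a^+)x-S_A((t-c)^+)x$, i.e. $S_A(t-a^+)x$ for $t\in[a^+,c)$ and $S_A(t-a^+)x-S_A(t-c)x$ for $t\ge c$. When $a>0$ the map is $0$ on $[0,a]$ and its right derivative at $a$ is $S_A(0)x-S_A(0)x=0$, so in fact the map is differentiable on all of $[0,+\infty)$.

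It remains to recognize the $t\ge c$ branch as $T_{A_0}(t-c)S_A(c-a^+)x$, for which I will use the identity
\[
T_{A_0}(s)\,S_A(\tau)\,x=S_A(s+\tau)x-S_A(s)x,\qquad s,\tau\ge 0,\ x\in X .
\]
Applying property (iii) of the integrated semigroup to the vector $x$ gives $S_A(s)(S_A(\tau)x)=\int_0^s[S_A(r+\tau)x-S_A(r)x]\,dr$; since $S_A(\tau)x\in X_0$, the left-hand side is continuously differentiable in $s$ with derivative $T_{A_0}(s)S_A(\tau)x$, while differentiating the right-hand side in $s$ yields $S_A(s+\tau)x-S_A(s)x$, which proves the identity. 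Taking $s=t-c$ and $\tau=c-a^+$ turns $S_A(t-a^+)x-S_A(t-c)x$ into $T_{A_0}(t-c)S_A(c-a^+)x$, the stated expression. The only delicate points are this last identity and the observation that the corner of $t\mapsto(t-c)^+$ at $t=c$ does not destroy differentiability because the integrand $S_A(\cdot)x$ vanishes at $0$; everything else is bookkeeping with the Fubini-free one-dimensional substitution.
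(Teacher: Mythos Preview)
Your proof is correct. The paper's own proof consists of the single sentence ``The proof is straightforward,'' so there is nothing to compare against: you have simply supplied the details that the authors omitted, and your argument---computing the convolution explicitly via the primitive $\Psi$, handling the corner at $t=c$ using $S_A(0)=0$, and recovering the $T_{A_0}(t-c)S_A(c-a^+)x$ form from the integrated-semigroup functional equation---is exactly the kind of verification the authors had in mind.
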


\begin{proof}
The proof is straightforward.
\end{proof}

\vspace{0.5cm}

Now we have all the material in order to prove Theorem \ref{TH1.6}. \vspace{%
0.5cm}\newline

\begin{proof}[Proof of Theorem \protect\ref{TH1.6}]
Since the proof is trivial when $f(t)=0$ it is sufficient to prove our
theorem for $x_0=0$. Let $t_0\in \mathbb{R}$ be fixed. Recalling for each $%
\lambda> \omega$ 
\begin{equation*}
v_\lambda (t,t_0)=\int_{t_0}^t U_B(t,s)\lambda R_\lambda(A)f(s)ds, \ \forall
t\geq t_0,
\end{equation*}
we will show that the limit 
\begin{equation}  \label{3.6}
\bar{v}(t,t_0):= \underset{\lambda\rightarrow +\infty}{\lim}
v_\lambda(t,t_0), \ \forall t\geq t_0,
\end{equation}
is well defined and is an integrated solution of 
\begin{equation}  \label{3.7}
\dfrac{dv(t)}{dt}=[A+B(t)]v(t)+f(t), \ t\geq t_0 \ \text{and} \ v(t_0)=0.
\end{equation}
First of all note that by Lemma \ref{LE2.13}, problem (\ref{3.7}) admits a
unique integrated solution $v(\cdot,t_0)\in C([t_0,+\infty),X_0)$ satisfying 
\begin{equation}  \label{3.8}
v(t,t_0)=(S_A \diamond (Bv(\cdot,t_0))(t_0+\cdot))(t-t_0)+ (S_A \diamond
f(t_0+\cdot))(t-t_0), \ \forall t\geq t_0,
\end{equation}
where we have used the notation $(Bv(\cdot,t_0))(t)=B(t)v(t,t_0)$ for every $%
t\geq t_0$. Furthermore by Lemma \ref{LE3.3} we also have for each $%
\lambda>\omega$ and each $t\geq t_0$ 
\begin{equation}  \label{3.9}
v_\lambda(t,t_0)=\dfrac{d}{dt} \int_{t_0}^t S_{A}(t-r) B(r)v_\lambda(r,t_0)
dr+\lambda R_\lambda(A)w(t,t_0) , \ \forall t\geq t_0,
\end{equation}
with 
\begin{equation}
w(t,t_0)=\dfrac{d}{dt}\int_{t_0}^t S_A(t-s) f(s) ds= (S_A \diamond
f(t_0+\cdot))(t-t_0), \ \forall t\geq t_0.
\end{equation}
Then (\ref{3.8}) and (\ref{3.9}) rewrites, for each $\lambda>\omega$, as the
following system 
\begin{equation}  \label{3.11}
\left\lbrace 
\begin{array}{ll}
v_\lambda(t,t_0) & =(S_A \diamond
(Bv_\lambda(\cdot,t_0))(t_0+\cdot))(t-t_0)+\lambda R_\lambda(A)w(t,t_0), \
t\geq t_0\vspace{0.3cm} \\ 
v(t,t_0) & =(S_A \diamond (Bv(\cdot,t_0))(t_0+\cdot))(t-t_0)+ w(t,t_0), \
t\geq t_0%
\end{array}
\right.
\end{equation}
where we have used the notation $(Bv_\lambda
(\cdot,t_0))(t)=B(t)v_\lambda(t,t_0)$ for every $t\geq t_0$. \newline
Let $I\subset\mathbb{R}$ be a compact subset of $\mathbb{R}$. To show that (%
\ref{3.6}) exists uniformly for $t\geq t_0$ in $I$, we will make use of
Proposition \ref{PROP2.16}.\newline
We have from (\ref{3.11}) that for every $\lambda >\omega$ 
\begin{equation}  \label{3.12}
v_\lambda(t,t_0)-v(t,t_0)=(S_A \diamond
(B(v_\lambda(\cdot,t_0)-v(\cdot,t_0)))(t_0+\cdot))(t-t_0)+[\lambda
R_\lambda(A)-I]w(t), \ \forall t\geq t_0
\end{equation}
with the notation 
\begin{equation*}
(B(v_\lambda(\cdot,t_0)-v(\cdot,t_0)))(t):=B(t)(v_\lambda(t,t_0)-v(t,t_0)),
\ \forall t\geq t_0.
\end{equation*}
Let $\varepsilon>0$ be given such that 
\begin{equation}  \label{3.13}
2\varepsilon \ \underset{s\in I}{\sup}\ b(s)<\frac{1}{4}.
\end{equation}
Let $\tau_\varepsilon >0$ be given with $M\delta (\tau_\varepsilon)\leq
\varepsilon$. Then by using (\ref{3.12}) and Proposition \ref{PROP2.16} we
obtain for each $\lambda >\omega$ and each $t\geq t_0$ with $t,t_0\in I$ 
\begin{equation*}
\Vert v_\lambda(t,t_0)-v(t,t_0)\Vert \leq C(\varepsilon,\gamma) \underset{ 
\underset{s,t_0\in I}{s\geq t_0} }{\sup} \left[ e^{\gamma (t-s)}\ b(s)\
\Vert v_\lambda(s,t_0)-v(s,t_0)\Vert \right]+ \Vert [\lambda
R_\lambda(A)-I]w(t,t_0)\Vert,
\end{equation*}
whenever $\gamma \in (\omega,+\infty)$ with 
\begin{equation*}
C(\varepsilon,\gamma):=\dfrac{2 \varepsilon \max (1,e^{-\gamma
\tau_\varepsilon })}{1-e^{(\omega-\gamma) \tau_\varepsilon}},
\end{equation*}
so that 
\begin{equation*}
\begin{array}{ll}
\underset{ \underset{s,t_0\in I}{s\geq t_0} }{\sup} e^{-\gamma s} \Vert
v_\lambda(s,t_0)-v(s,t_0)\Vert \leq & C(\varepsilon,\gamma)\ \underset{s\in I%
}{\sup}\ b(s) \ \underset{ \underset{s,t_0\in I}{s\geq t_0} }{\sup}
e^{-\gamma s}\Vert v_\lambda(s,t_0)-v(s,t_0)\Vert \\ 
& +\underset{ \underset{s,t_0\in I}{s\geq t_0} }{\sup} \Vert [\lambda
R_\lambda(A)-I]w(s,t_0)\Vert.%
\end{array}%
\end{equation*}
By using (\ref{3.13}) one can chose $\gamma>0$ large enough such that 
\begin{equation*}
0\leq C(\varepsilon,\gamma)\ \underset{s\in I}{\sup}\ b(s)\ <\frac{1}{2},
\end{equation*}
providing for all $\lambda>\omega$ that 
\begin{equation*}
\underset{ \underset{s,t_0\in I}{s\geq t_0} }{\sup} e^{-\gamma s} \Vert
v_\lambda(s,t_0)-v(s,t_0)\Vert \leq 2 \underset{ \underset{s,t_0\in I}{s\geq
t_0} }{\sup} e^{-\gamma s} \Vert [\lambda R_\lambda(A)-I]w(s,t_0)\Vert.
\end{equation*}
Hence recalling that the limit $\underset{\lambda \rightarrow +\infty}{\lim}
\lambda R_\lambda(A) y=y$ is uniform on relatively compact sets of $X_0$ and
by observing that $w(\cdot,\cdot)$ maps $I\times I$ into a relatively
compact set of $X_0$ we obtain 
\begin{equation*}
\underset{\lambda \rightarrow +\infty}{\lim} \underset{ \underset{s,t_0\in I}%
{s\geq t_0} }{\sup} e^{-\gamma s} \Vert [\lambda
R_\lambda(A)-I]w(s,t_0)\Vert=0
\end{equation*}
that is 
\begin{equation*}
\underset{\lambda \rightarrow +\infty}{\lim} \underset{ \underset{s,t_0\in I}%
{s\geq t_0} }{\sup} e^{-\gamma s}\Vert v_\lambda(s,t_0)-v(s,t_0)\Vert=0
\end{equation*}
and since $I$ is bounded this implies 
\begin{equation*}
\underset{\lambda \rightarrow +\infty}{\lim} \underset{ \underset{s,t_0\in I}%
{s\geq t_0} }{\sup} \Vert v_\lambda(s,t_0)-v(s,t_0)\Vert=0.
\end{equation*}
The proof is complete.
\end{proof}

\section{A uniform convergence results}

\label{Sect-Uniform convergence}

Let $BUC(\mathbb{R},X)$ be the space of bounded and uniformly continuous
functions on $\mathbb{R}$. The next proposition gives a uniform
approximation result subject to $f$ belongs to an appropriate subspace of $%
BUC(\mathbb{R},X)$.

\begin{proposition}
\label{PROP4.1} Let Assumptions \ref{ASS1.1}, \ref{ASS1.2} and \ref{ASS1.3}
be satisfied. Assume in addition that 
\begin{equation*}
\sup_{t\in \mathbb{R}} b(t)<+\infty.
\end{equation*}
Let $f\in BUC(\mathbb{R},X)$ with relatively compact range. Then, for any
fixed $t_0>0$ the limit 
\begin{equation*}
\underset{\lambda \rightarrow +\infty}{\lim}\int_{t-t_0}^t U_B(t,s)\lambda
R_\lambda(A)f(s)ds,
\end{equation*}
exists uniformly for $t\in \mathbb{R}$.
\end{proposition}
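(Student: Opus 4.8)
The plan is to rerun the proof of Theorem~\ref{TH1.6}, but with the time variable recentred at the left endpoint of the moving window, so that all the constants produced by Proposition~\ref{PROP2.16} are independent of the window position, and then to replace the compactness of $w$ on $I\times I$ used there by a compactness statement that is uniform over all translates of $f$. Concretely, fix $\sigma\in\mathbb{R}$, write the window as $[\sigma,\sigma+t_0]$, and set for $\tau\geq\sigma$
\[ v_\lambda(\tau,\sigma):=\int_\sigma^\tau U_B(\tau,s)\lambda R_\lambda(A)f(s)\,ds,\qquad w(\tau,\sigma):=(S_A\diamond f(\sigma+\cdot))(\tau-\sigma). \]
Let $v(\cdot,\sigma)\in C([\sigma,+\infty),X_0)$ be the integrated solution of (\ref{1.1}) with $x_0=0$ and initial time $\sigma$ (it exists and is unique by Lemma~\ref{LE2.13}); it is also the natural candidate for the limit. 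The goal is to show $\sup_{\sigma\in\mathbb{R}}\|v_\lambda(\sigma+t_0,\sigma)-v(\sigma+t_0,\sigma)\|\to 0$ as $\lambda\to+\infty$, since, $v$ being independent of $\lambda$, this says precisely that the limit in the statement exists and equals $v(t,t-t_0)$, uniformly in $t=\sigma+t_0$.

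For the estimate I would subtract the fixed-point identity of Lemma~\ref{LE3.3}\,\textit{i)} for $v_\lambda$ from the analogue of (\ref{3.8}) for $v$ (both with $\sigma$ in place of $t_0$), which gives, for $\phi_\lambda(\tau):=v_\lambda(\tau,\sigma)-v(\tau,\sigma)$,
\[ \phi_\lambda(\tau)=\frac{d}{d\tau}\int_\sigma^\tau S_A(\tau-r)B(r)\phi_\lambda(r)\,dr+[\lambda R_\lambda(A)-I]w(\tau,\sigma),\qquad\tau\geq\sigma. \]
Shifting the integral to start at $0$ and applying Proposition~\ref{PROP2.16} (whose constant $C(\varepsilon,\gamma)$ does not depend on $\sigma$), with $\varepsilon>0$ such that $2\varepsilon\sup_{s\in\mathbb{R}}b(s)<\tfrac14$ and $\tau_\varepsilon$ with $M\delta(\tau_\varepsilon)\leq\varepsilon$, one gets for every $\gamma>\max(0,\omega)$
\[ \|\phi_\lambda(\tau)\|\leq C(\varepsilon,\gamma)\sup_{r\in[\sigma,\tau]}e^{\gamma(\tau-r)}b(r)\|\phi_\lambda(r)\|+\|[\lambda R_\lambda(A)-I]w(\tau,\sigma)\|. \]
Multiplying by $e^{-\gamma(\tau-\sigma)}$, taking the supremum over $\tau\in[\sigma,\sigma+t_0]$, and using that $C(\varepsilon,\gamma)\to 2\varepsilon$ as $\gamma\to+\infty$ to fix $\gamma$ (independently of $\sigma$) with $C(\varepsilon,\gamma)\sup_{s\in\mathbb{R}}b(s)<\tfrac12$, one absorbs the first term and is left with $\sup_{\tau\in[\sigma,\sigma+t_0]}\|\phi_\lambda(\tau)\|\leq 2e^{\gamma t_0}\sup_{\tau\in[\sigma,\sigma+t_0]}\|[\lambda R_\lambda(A)-I]w(\tau,\sigma)\|$.

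The remaining point, and the new ingredient, is that $\mathcal{W}:=\{w(\tau,\sigma):\sigma\in\mathbb{R},\ \tau\in[\sigma,\sigma+t_0]\}$ is relatively compact in $X_0$; once this is known, $\lambda R_\lambda(A)\to I$ uniformly on $\mathcal{W}$ by the Remark following Lemma~\ref{LE2.1}, the right-hand side above tends to $0$ uniformly in $\sigma$, and the proof is complete. Writing $w(\tau,\sigma)=(S_A\diamond g_\sigma)(\tau-\sigma)$ with $g_\sigma:=f(\sigma+\cdot)|_{[0,t_0]}\in C([0,t_0],X)$, the family $\{g_\sigma:\sigma\in\mathbb{R}\}$ is uniformly bounded by $\sup_{\mathbb{R}}\|f\|$, equicontinuous with the modulus of uniform continuity of $f$, and pointwise valued in the relatively compact range of $f$, hence relatively compact in $C([0,t_0],X)$ by Arzel\`a--Ascoli. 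Since $(g,h)\mapsto(S_A\diamond g)(h)$ is continuous from $C([0,t_0],X)\times[0,t_0]$ into $X_0$ — continuity in $g$ from the bound $\|(S_A\diamond g)(h)\|\leq\delta(t_0)\sup_{[0,t_0]}\|g\|$ of Assumption~\ref{ASS1.2}, continuity in $h$ from Theorem~\ref{TH2.11} — it carries the compact set $\overline{\{g_\sigma:\sigma\in\mathbb{R}\}}\times[0,t_0]$ onto a compact subset of $X_0$ containing $\mathcal{W}$, which gives the claim.

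I expect the main obstacle to be the bookkeeping that keeps $\gamma$ (equivalently $C(\varepsilon,\gamma)$) independent of $\sigma$: this is exactly where the extra hypothesis $\sup_{t\in\mathbb{R}}b(t)<+\infty$ and the time-translation invariance of Proposition~\ref{PROP2.16} are used, and it is what is not available in the setting of Theorem~\ref{TH1.6} alone. The only genuinely new analytic input beyond that proof is the Arzel\`a--Ascoli step yielding the uniform relative compactness of the forcing terms $w(\tau,\sigma)$, and that is precisely where $f\in BUC(\mathbb{R},X)$ with relatively compact range enters.
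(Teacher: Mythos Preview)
Your proof is correct and follows the same overall architecture as the paper's: reduce the uniform convergence, via Proposition~\ref{PROP2.16} and the fixed-point identity of Lemma~\ref{LE3.3}, to the uniform convergence $\lambda R_\lambda(A)\to I$ on the set $\mathcal{W}=\{w(\tau,\sigma):\sigma\in\mathbb{R},\ \tau\in[\sigma,\sigma+t_0]\}$, and then show $\mathcal{W}$ is relatively compact in $X_0$. The paper carries out the first step with a Cauchy criterion ($v_\lambda$ against $v_\mu$) rather than against the integrated solution $v$, but this is cosmetic.

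The genuine difference is in how relative compactness of $\mathcal{W}$ (the paper in fact only treats the slice $\{w(t,t-t_0):t\in\mathbb{R}\}$, which suffices for its Cauchy estimate) is obtained. The paper argues by explicit approximation: it replaces $f$ by a step function $g=\sum_k x_k\mathbbm{1}_{[k\eta,(k+1)\eta)}$ with values in a finite set $K_0\subset X$, then computes $(S_A\diamond g(t-t_0+\cdot))(t_0)$ by hand (via Lemma~\ref{LE3.4}) as a finite sum of terms of the form $T_{A_0}(s_k)S_A(l_k)x_k$ with $s_k,l_k\in[0,t_0]$ and $x_k\in K_0$, hence lying in an explicitly described compact set; the remainder $(S_A\diamond(f-g)(t-t_0+\cdot))(t_0)$ is made uniformly small by Proposition~\ref{PROP2.16}. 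Your route via Arzel\`a--Ascoli is shorter and more conceptual: you recognise $\mathcal{W}$ as the continuous image of the compact set $\overline{\{f(\sigma+\cdot)|_{[0,t_0]}:\sigma\in\mathbb{R}\}}\times[0,t_0]$ under $(g,h)\mapsto(S_A\diamond g)(h)$. This avoids the step-function calculus entirely and makes transparent exactly where each hypothesis on $f$ (bounded, uniformly continuous, relatively compact range) is consumed --- precisely as the three hypotheses of the vector-valued Arzel\`a--Ascoli theorem. The paper's argument, in return, is self-contained and does not appeal to Arzel\`a--Ascoli.
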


\begin{proof}
Let $t_0>0$ be fixed. Recall that for each $\lambda>\omega$ we have 
\begin{equation*}
v_\lambda(t,t-t_0)=\int_{t-t_0}^t U_B(t,s)\lambda R_\lambda(A)f(s)ds, \
\forall t\in \mathbb{R}.
\end{equation*}
Thus by using similar arguments in the proof of \textit{ii)} in Lemma \ref%
{LE3.3} we have for each $t\in \mathbb{R}$, each $\lambda>\omega$ and $%
\mu>\omega$ 
\begin{equation*}
\underset{s\in [t-t_0,t]}{\sup} e^{-\gamma s} \Vert
v_\lambda(s,s-t_0)-v_\mu(s,s-t_0)\Vert \leq 2 \ \underset{s\in [t-t_0,t]}{%
\sup} e^{-\gamma s} \Vert [\lambda R_\lambda(A)-\mu R_\mu(A)]w(s,s-t_0)\Vert,
\end{equation*}
with $\gamma>\max(0,\omega)$ (large enough) and 
\begin{equation*}
w(t_1,t_2)=(S_A\diamond f(t_2+\cdot))(t_1-t_2), \ \forall (t_1,t_2)\in
\Delta.
\end{equation*}
Hence for each $t\in \mathbb{R}$, each $\lambda>\omega$ and $\mu >\omega$ 
\begin{equation*}
\begin{array}{ll}
\Vert v_\lambda(t,t-t_0)-v_\mu(t,t-t_0)\Vert & \leq 2 \ \underset{s\in
[t-t_0,t]}{\sup} e^{\gamma (t-s)} \Vert [\lambda R_\lambda(A)-\mu
R_\mu(A)]w(s,s-t_0)\Vert \\ 
& \leq 2\ e^{\gamma t_0} \ \underset{s\in [t-t_0,t]}{\sup} \Vert [\lambda
R_\lambda(A)-\mu R_\mu(A)]w(s,s-t_0)\Vert%
\end{array}%
\end{equation*}
Then to prove our proposition, it is sufficient to show that 
\begin{equation*}
\underset{\lambda,\mu \rightarrow +\infty}{\lim} \underset{s\in \mathbb{R}}{%
\sup} \Vert [\lambda R_\lambda(A)-\mu R_\mu(A)]w(s,s-t_0)\Vert=0.
\end{equation*}
This can be achieved by proving that $w(\cdot,\cdot-t_0)$ maps $\mathbb{R}$
in a relatively compact subset of $X_0$. To do so we will prove that for any 
$\varepsilon>0$, there exists a relatively compact set $K$ such that 
\begin{equation*}
w(t,t-t_0)\in K + c\varepsilon B_{X_0}(0,1), \ \forall t\in \mathbb{R},
\end{equation*}
for some constant $c>0$ and $B_{X_0}(0,1)$ the closed unit ball of $X_0$.%
\newline
Let $\varepsilon>0$ be given and fixed. Then since $f$ has its range in a
relatively compact subset of $X$, there exists $\eta=\frac{t_0}{n}>0$, with $%
n\in \mathbb{N}\setminus \{0\}$ and a function $g:\mathbb{R}\rightarrow X$
such that $g$ is constant on each interval $[k\eta, (k+1)\eta)$, $k\in 
\mathbb{Z}$. Moreover the range of $g$ is contained in a finite set $%
K_0\subset X$ and 
\begin{equation*}
\underset{t\in \mathbb{R}}{\sup}\Vert f(t)-g(t)\Vert\leq \varepsilon.
\end{equation*}
Note that $g$ can be written as 
\begin{equation*}
g(t)=\underset{k\in \mathbb{Z}}{\sum} x_k \mathbbm{1}_{[k\eta
,k\eta+\eta)}(t), \ \forall t\in \mathbb{R},
\end{equation*}
with $x_k\in K_0$ for all $k\in \mathbb{Z}$. Then by Lemma \ref{LE3.4} it is
easy to see that $t \rightarrow (S_A\ast g)(t)$ is differentiable on $%
[0,+\infty)$ and we can write 
\begin{equation*}
w(t,t-t_0)=(S_A\diamond g(t-t_0+\cdot))(t_0)+(S_A \diamond
(f-g)(t-t_0+\cdot))(t_0), \ \forall t\in \mathbb{R}.
\end{equation*}
Let $t\in \mathbb{R}$ be fixed. Note that one can write 
\begin{equation*}
t=k_0\eta + r, \ \text{ with } r \in [0,\eta) \ \text{and} \ k_0\in \mathbb{Z%
},
\end{equation*}
providing that (recalling $t_0=n\eta$) 
\begin{equation*}
\begin{array}{ll}
(S_A\diamond g(t-t_0+\cdot))(t_0) & =\displaystyle\dfrac{d}{dt} \int_0^{t_0}
S_A(t_0-s)g(t-t_0+s)ds \\ 
& =\displaystyle \dfrac{d}{dt} \int_{t-t_0}^{t} S_A(t-s)g(s)ds \\ 
& =\displaystyle \dfrac{d}{dt} \int_{(k_0-n)\eta+r}^{k_0\eta+r}
S_A(t-s)g(s)ds \\ 
& =\displaystyle\underset{i=0}{\overset{n-1}{\sum}}\displaystyle \dfrac{d}{dt%
} \int_{(k_0-i-1)\eta+r}^{(k_0-i)\eta+r} S_A(k_0\eta+r-s)g(s)ds \\ 
& =\displaystyle\underset{i=0}{\overset{n-1}{\sum}}\displaystyle \dfrac{d}{dt%
}\left[ \int_{(k_0-i-1)\eta+r}^{(k_0-i)\eta} S_A(k_0\eta+r-s)x_{k_0-i-1}ds
\right. \\ 
& \qquad \qquad\displaystyle \left. + \int_{(k_0-i)\eta}^{(k_0-i)\eta+r}
S_A(k_0\eta+r-s)x_{k_0-i}ds\right]%
\end{array}%
\end{equation*}
therefore we obtain 
\begin{equation*}
\begin{array}{ll}
(S_A\diamond g(t-t_0+\cdot))(t_0) & =\displaystyle\underset{i=0}{\overset{n-1%
}{\sum}}\displaystyle\left[S_A(i\eta+\eta)-S_A(i\eta+r)\right]x_{k_0-i-1} \\ 
& \qquad \qquad+\displaystyle \displaystyle\underset{i=0}{\overset{n-1}{\sum}%
}\left[S_A(i\eta+r)-S_A(i\eta)\right]x_{k_0-i} \\ 
& =\displaystyle\underset{i=1}{\overset{n}{\sum}}\displaystyle\left[%
S_A(i\eta)-S_A(i\eta-\eta+r)\right]x_{k_0-i} \\ 
& \qquad \qquad+\displaystyle \displaystyle\underset{i=0}{\overset{n-1}{\sum}%
}\left[S_A(i\eta+r)-S_A(i\eta)\right]x_{k_0-i} \\ 
& =[S_A(n\eta)-S_A(n\eta-\eta+r)]x_{k_0-n} \\ 
& \qquad \qquad+\displaystyle\underset{i=1}{\overset{n-1}{\sum}}%
\displaystyle \left[S_A(i\eta+r)-S_A(i\eta-\eta+r)\right]%
x_{k_0-i}+S_A(r)x_{k_0} \\ 
& = T_{A_0}(n\eta-\eta+r)S_A(\eta-r)x_{k_0-n} \\ 
& \qquad \qquad+\displaystyle\underset{i=1}{\overset{n-1}{\sum}}%
\displaystyle T_{A_0}(i\eta-\eta+r)S_A(\eta)x_{k_0-i}+S_A(r)x_{k_0}, \\ 
& 
\end{array}%
\end{equation*}
so that we can claim that $t\rightarrow (S_A\diamond g(t-t_0+\cdot))(t_0) $
has its range in 
\begin{equation*}
K=\left \lbrace \underset{k=0}{\overset{n}{\sum}} T_{A_0}(s_k)S_A(l_k)x_k :
0\leq s_k,l_k \leq t_0 \text{ and } x_k\in K_0, \ k=0,\dots,n \right \rbrace.
\end{equation*}
Then recalling that 
\begin{equation*}
(t,x)\in [0,+\infty)\times X\rightarrow S(t)x \ \ \text{and} \ \ (t,x)\in
[0,+\infty)\times X_0\rightarrow T(t)x
\end{equation*}
are continuous, $K$ is clearly compact. \newline
To complete the proof it remains to give an estimate of $z(\cdot,\cdot-t_0)$
with 
\begin{equation*}
z(t_1,t_2):=(S_A \diamond (f-g)(t_2+\cdot))(t_1-t_2), \ \forall (t_1,t_2)\in
\Delta.
\end{equation*}
By using Proposition \ref{PROP2.16} one obtains 
\begin{equation*}
\Vert z(t_1,t_2) \Vert \leq C(1,\gamma_0) \underset{t\in [0,t_1-t_2]}{\sup}
e^{\gamma_0 (t_1-t_2-t) }\Vert f(t_2+t)-g(t_2+t) \Vert, \ \forall
(t_1,t_2)\in \Delta.
\end{equation*}
with $\gamma_0>\max(0,\omega)$, $M\delta(\tau_1)\leq 1$ and 
\begin{equation*}
C(1,\gamma_0):=\dfrac{2 \max (1,e^{-\gamma_0 \tau_1 })}{1-e^{(\omega-%
\gamma_0) \tau_1}}.
\end{equation*}
Therefore 
\begin{equation*}
\begin{array}{ll}
\underset{(t_1,t_2)\in \Delta}{\sup} \Vert z(t_1,t_2) \Vert & \leq
C(1,\gamma_0) e^{\gamma_0 (t_1-t_2) } \underset{t\in \mathbb{R}}{\sup}\Vert
f(t)-g(t) \Vert \\ 
& \leq C(1,\gamma_0) e^{\gamma_0 (t_1-t_2) } \varepsilon,%
\end{array}%
\end{equation*}
that is 
\begin{equation*}
\begin{array}{ll}
\underset{t\in \mathbb{R}}{\sup} \ \Vert z(t,t-t_0) \Vert \leq C(1,\gamma_0)
e^{\gamma_0 t_0 } \varepsilon, & 
\end{array}%
\end{equation*}
and the result follows.
\end{proof}

\section{Exponential dichotomy}

\label{Sect-Exponential dichotomy} In this section we consider the complete
orbit of the Cauchy problem (\ref{1.1}). Namely we consider a continuous map 
$u:(-\infty,+\infty)\rightarrow X_0$ as a mild solution of 
\begin{equation}
\dfrac{du(t)}{dt}=(A+B(t))u(t)+f(t),\text{ for }t \in \mathbb{R}.
\label{5.1}
\end{equation}%
This part is devoted to the proof of Theorems \ref{THEO1.10} and \ref%
{THEO1.11}. We will give necessary and sufficient condition for the
evolution family $\{U_B(t,s)\}_{(t,s)\in \Delta}\subset \mathcal{L}(X_0)$ to
have an exponential dichotomy. More precisely we will prove that the
existence of exponential dichotomy for $\{U_B(t,s)\}_{(t,s)\in \Delta}$ is
equivalent to the existence of integrated solution $u\in C(\mathbb{R},X_0)$
for all $f$ in an appropriate subspace of $C(\mathbb{R},X)$. \newline
In what follows when $\lbrace U(t,s)\rbrace_{(t,s)\in \Delta}\subset 
\mathcal{L}(Z)$ has an exponential dichotomy we define its associate Green's
operator function by 
\begin{equation*}
\Gamma(t,s):= \left\lbrace 
\begin{array}{ll}
U^+(t,s), \ \ \ \text{ if } t\geq s, &  \\ 
-U^-(s,t), \ \text{ if } t<s. & 
\end{array}
\right.
\end{equation*}

\begin{remark}
It is well known that when $\lbrace U(t,s)\rbrace_{(t,s)\in \Delta}\subset 
\mathcal{L}(Z)$ has an exponential dichotomy then for each $x\in Z$, the map 
$(t,s) \in \mathbb{R}^2 \rightarrow U^-(t,s)x$ is continuous from $\mathbb{R}%
^2$ into $Z$ (see \cite[Lemma VI.9.15]{Schnaubelt} or \cite[Lemma 9.17]%
{Nagel}).
\end{remark}

\begin{remark}
\label{RE5.2} It is easy to obtain from condition i) in Definition \ref%
{DEF1.8} 
\begin{equation*}
\Pi^+(t)\Pi^-(t)=\Pi^-(t)\Pi^+(t)=0_{\mathcal{L}(Z)}.
\end{equation*}
We also trivially have 
\begin{equation*}
U^+(t,t)=\Pi^{+}(t) \quad \text{and} \quad U^+(t,r)U^+(r,l)=U^+(t,l), \quad
\forall t\geq r\geq l,
\end{equation*}
while 
\begin{equation*}
U^-(t,t)=\Pi^{-}(t) \quad \text{and} \quad U^-(t,r)U^-(r,l)=U^-(t,l). \quad
\forall t, r,l\in \mathbb{R},
\end{equation*}
It follows that $U^+$ (respectively $U^-$) is a strongly continuous semiflow
(respectively flow). One may also observe that 
\begin{equation*}
\quad U^-(t,r)U(r,l)=U^-(t,l), \quad \forall (r,t),(r ,l)\in \Delta
\end{equation*}
and 
\begin{equation*}
\quad U^+(t,r)U(r,l)=U^+(t,l), \quad \forall (t,r),(r ,l)\in \Delta.
\end{equation*}
\end{remark}

\begin{notation}
Let $(Z,\Vert \cdot \Vert)$ be a Banach space. The following weighted Banach
spaces will be used in the sequel 
\begin{equation*}
BC^{\eta}(\mathbb{R},Z):=\left \lbrace f \in C(\mathbb{R},Z): \Vert f
\Vert_{\eta}:= \underset{t\in \mathbb{R}}{\sup }e^{-\eta \vert t \vert}\Vert
f(t) \Vert_Z<+\infty \right \rbrace, \quad \eta \geq 0.
\end{equation*}
Note that we have the following continuous embedding 
\begin{equation*}
BC^{\eta_1}(\mathbb{R},Z)\subseteq BC^{\eta_2}(\mathbb{R},Z) \quad \text{ if 
} \quad \eta_1\leq \eta_2.
\end{equation*}
If $\eta=0 $ we set 
\begin{equation*}
BC(\mathbb{R},Z):=\left \lbrace f \in C(\mathbb{R},Z): \Vert f
\Vert_{\infty}:= \underset{t\in \mathbb{R}}{\sup }\Vert f(t) \Vert_Z<+\infty
\right \rbrace, \quad \eta \geq 0.
\end{equation*}
and we define 
\begin{equation*}
C_{0}(\mathbb{R},Z):=\left \lbrace f \in BC(\mathbb{R},Z): \underset{%
t\rightarrow \pm \infty }{\lim } f(t)=0 \right \rbrace.
\end{equation*}
\end{notation}

The following result is well known in the context of exponential dichotomy.
We refer for instance to \cite{Baskakov, Latushkin, Levitan}.

\begin{theorem}
\label{TH5.4} Let $Z$ be a Banach space. Let $\lbrace
U(t,s)\rbrace_{(t,s)\in \Delta}\subset \mathcal{L}(Z)$ be an exponentially
bounded evolution family. Consider the following integral equation 
\begin{equation}  \label{5.1bis}
u(t)=U(t,t_0)u(t_0)+\int_{t_0}^t U(t,s)f(s)ds, \ (t,t_0)\in \Delta.
\end{equation}
Then the following properties are equivalent

\begin{itemize}
\item[i)] $\lbrace U(t,s)\rbrace_{(t,s)\in \Delta}\subset \mathcal{L}(Z)$
has an exponential dichotomy.

\item[ii)] Let $\mathcal{F} (\mathbb{R},Z)$ be the space $BC(\mathbb{R},Z)$
or $C_0(\mathbb{R},Z)$. Then for any $f\in \mathcal{F}(\mathbb{R},Z)$ there
exists a unique solution $u\in \mathcal{F}(\mathbb{R},Z)$ of (\ref{5.1bis}).
\end{itemize}

Moreover if $\lbrace U(t,s)\rbrace_{(t,s)\in \Delta}$ has an exponential
dichotomy then for each $f\in \mathcal{F}(\mathbb{R},Z)$ the unique solution 
$u\in \mathcal{F}(\mathbb{R},Z)$ of (\ref{5.1bis}) is given by 
\begin{equation*}
u(t)=\int_{-\infty}^{+\infty} \Gamma(t,s)f(s)ds, \ \forall t\in \mathbb{R},
\end{equation*}
where $\lbrace \Gamma(t,s)\rbrace_{(t,s)\in \mathbb{R}^2}\subset \mathcal{L}%
(Z)$ is the Green's operator function associated to $\lbrace
U(t,s)\rbrace_{(t,s)\in \Delta}.$
\end{theorem}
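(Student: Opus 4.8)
\textbf{Proof plan for Theorem \ref{TH5.4}.}
The plan is to treat this as a reduction to the classical Perron-type characterization of exponential dichotomy for exponentially bounded evolution families, which is precisely what references such as \cite{Baskakov, Latushkin, Levitan} provide. Since Theorem \ref{TH5.4} is stated as ``well known'', the honest route is to cite those sources directly; but if a self-contained argument is wanted, I would organize it around the equivalence between dichotomy and the surjectivity (plus injectivity) of the operator $L$ defined on a suitable domain by $(Lu)(t) = \dot u(t) - \mathcal A(t)u(t)$ in mild form, i.e. via the integral equation (\ref{5.1bis}). The key object is the operator $\mathbf{G}: \mathcal F(\mathbb{R},Z)\to \mathcal F(\mathbb{R},Z)$ that should turn out to be the inverse: $(\mathbf{G}f)(t) := \int_{-\infty}^{+\infty}\Gamma(t,s)f(s)\,ds$.

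The direction i) $\Rightarrow$ ii) is the more computational one and I would do it first. Assuming the dichotomy with projections $\{\Pi^\pm(t)\}$, constant $\kappa$ and exponent $\beta$, I would verify that $u(t) := \int_{-\infty}^{+\infty}\Gamma(t,s)f(s)\,ds$ is well-defined for $f\in BC(\mathbb{R},Z)$: split the integral at $s=t$, bound the forward part by $\kappa\int_{-\infty}^t e^{-\beta(t-s)}\|f(s)\|\,ds \le \kappa\beta^{-1}\|f\|_\infty$ using condition iv), and the backward part by $\kappa\int_t^{+\infty}e^{-\beta(s-t)}\|f(s)\|\,ds$. Continuity of $u$ follows from dominated convergence together with strong continuity of $U^+$ and (via the remark citing \cite{Schnaubelt,Nagel}) of $U^-$. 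Then one checks $u$ solves (\ref{5.1bis}) by inserting the semigroup/flow identities from Remark \ref{RE5.2} ($U^+(t,r)U(r,l)=U^+(t,l)$ and $U^-(t,r)U(r,l)=U^-(t,l)$) and the relation $\Pi^+(s)+\Pi^-(s)=I$ to recombine $U(t,t_0)u(t_0) + \int_{t_0}^t U(t,s)f(s)\,ds$ into $u(t)$. For the $C_0$ case one additionally shows $u(t)\to 0$ as $t\to\pm\infty$ by an $\varepsilon$-splitting of the real line (choose $T$ with $\|f(s)\|<\varepsilon$ for $|s|>T$ and estimate the tails). Uniqueness within $\mathcal F(\mathbb{R},Z)$: if $v$ solves the homogeneous equation and is bounded, apply $\Pi^+(t)$ and $\Pi^-(t)$ to $v$, use invertibility of $U^-$ on the range of $\Pi^-$ and let the relevant time go to $\mp\infty$; boundedness forces $\Pi^\pm(t)v(t)=0$, hence $v\equiv 0$.

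The direction ii) $\Rightarrow$ i) is the harder part and the main obstacle. Here one is handed only that the input-output map $f\mapsto u$ is a well-defined bijection on $\mathcal F(\mathbb{R},Z)$ and must manufacture the dichotomy projections and exponential estimates. The standard route: define $\Pi^+(s)$ as the set of initial data $x$ at time $s$ for which the forward solution $U(t,s)x$ of the homogeneous equation stays bounded (in fact decays); show this is a closed complemented subspace with complement the ``backward-bounded'' data, using the bijectivity of the input-output map applied to cleverly chosen $f$ supported near a point to extract a bounded inverse, and hence uniform constants; the exponential rate $\beta$ then comes from a standard discrete-time argument (Datko/Pazy-type: a uniform bound $\sup_{t\ge s}\|U^+(t,s)\|\le \kappa$ together with an $L^1$ or uniform-smallness estimate on long intervals upgrades to exponential decay). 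Rather than reproduce this machinery, I would state that this implication is exactly \cite[Theorem ...]{Baskakov} (or the corresponding results in \cite{Latushkin, Levitan}) for exponentially bounded evolution families, and note that the hypotheses there are met since $\{U(t,s)\}$ is assumed exponentially bounded. The final representation formula is then just the identification of the inverse established in the i) $\Rightarrow$ ii) computation, which shows the solution operator coincides with convolution against the Green's function $\Gamma$.
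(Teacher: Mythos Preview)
Your proposal is correct and matches the paper's approach: the paper does not prove Theorem \ref{TH5.4} at all but simply states it as well known and refers to \cite{Baskakov, Latushkin, Levitan}, exactly as you suggest doing. Your additional self-contained sketch of both implications (Green's function construction for i) $\Rightarrow$ ii), and the Perron-type argument or direct citation for ii) $\Rightarrow$ i)) goes beyond what the paper offers and is a sound outline of the standard proof.
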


In what follow we will give an analogue of Theorem \ref{TH5.4} for the
evolution family $\lbrace U_B(t,s)\rbrace_{(t,s)\in \Delta}\subset \mathcal{L%
}(X_0)$. To do so we will first prove some estimates.

\begin{proposition}
\label{PROP5.4} Let Assumptions \ref{ASS1.1}, \ref{ASS1.2} and \ref{ASS1.3}
be satisfied. Assume in addition that 
\begin{equation*}
\sup_{t\in \mathbb{R}} b(t) <+\infty.
\end{equation*}
Then there exists a non decreasing function $\delta^*:
[0,+\infty)\rightarrow [0,+\infty)$ with $\delta^*(t)\rightarrow 0$ as $%
t\rightarrow0^+$ such that for each $f\in C(\mathbb{R},X)$ and $\lambda> w+1$
the map 
\begin{equation*}
v_\lambda(t,t_0)=\int_{t_0}^t U_B(t,s)\lambda R_\lambda(A)f(s)ds, \
(t,t_0)\in \Delta,
\end{equation*}
satisfies 
\begin{equation*}
\Vert v_\lambda (t,t_0)\Vert \leq \delta^*(t-t_0) \underset{s \in [t_0,t]}{%
\sup} \Vert f(s) \Vert, \ \forall (t,t_0)\in \Delta.
\end{equation*}
\end{proposition}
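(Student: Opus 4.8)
The plan is to read the required estimate almost directly off Lemma~\ref{LE3.3}, so that Proposition~\ref{PROP5.4} becomes a quantitative reformulation of its part \textit{ii)}. Fix $f\in C(\mathbb{R},X)$ and keep the notation $w(s,t_{0}):=(S_{A}\diamond f(t_{0}+\cdot))(s-t_{0})$. By Lemma~\ref{LE3.3}-\textit{ii)} there is a constant $\gamma>\max(0,\omega)$ — depending only on $M$, $\omega$, the modulus $\delta(\cdot)$ and $\sup_{t}b(t)$, and in particular independent of $f$, of $\lambda$ and of $t_{0}$ — such that for every $\lambda>\omega$ and every $(t,t_{0})\in\Delta$,
\begin{equation*}
\sup_{s\in[t_{0},t]}e^{-\gamma s}\,\Vert v_{\lambda}(s,t_{0})\Vert\leq 2\sup_{s\in[t_{0},t]}e^{-\gamma s}\,\Vert\lambda R_{\lambda}(A)w(s,t_{0})\Vert .
\end{equation*}

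Next I would bound the two factors on the right. Since $w(s,t_{0})\in X_{0}$, the second estimate in Lemma~\ref{LE3.3}-\textit{ii)} gives $\Vert\lambda R_{\lambda}(A)w(s,t_{0})\Vert\leq \frac{M|\lambda|}{\lambda-\omega}\Vert w(s,t_{0})\Vert$; for $\lambda>\omega+1$ we have $\lambda-\omega>1$, hence $|\lambda|\leq(\lambda-\omega)+|\omega|$ and $\frac{|\lambda|}{\lambda-\omega}\leq 1+|\omega|$ uniformly in such $\lambda$ (this is exactly why the hypothesis $\lambda>\omega+1$ is imposed). On the other hand, Theorem~\ref{TH2.11}-\textit{ii)} (equivalently the estimate (\ref{1.4}) of Assumption~\ref{ASS1.2}) yields $\Vert w(s,t_{0})\Vert\leq \delta(s-t_{0})\sup_{r\in[t_{0},s]}\Vert f(r)\Vert$, which for $s\in[t_{0},t]$ is at most $\delta(t-t_{0})\sup_{r\in[t_{0},t]}\Vert f(r)\Vert$ because $\delta$ is non-decreasing. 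Using also $e^{-\gamma s}\leq e^{-\gamma t_{0}}$ for $s\geq t_{0}$ (since $\gamma>0$), I get for all $\lambda>\omega+1$,
\begin{equation*}
\sup_{s\in[t_{0},t]}e^{-\gamma s}\Vert\lambda R_{\lambda}(A)w(s,t_{0})\Vert\leq e^{-\gamma t_{0}}\,M(1+|\omega|)\,\delta(t-t_{0})\sup_{r\in[t_{0},t]}\Vert f(r)\Vert .
\end{equation*}

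Combining the two displays and multiplying through by $e^{\gamma t}$ (using $e^{-\gamma t}\Vert v_{\lambda}(t,t_{0})\Vert\leq\sup_{s\in[t_{0},t]}e^{-\gamma s}\Vert v_{\lambda}(s,t_{0})\Vert$), I obtain for $\lambda>\omega+1$ and $(t,t_{0})\in\Delta$
\begin{equation*}
\Vert v_{\lambda}(t,t_{0})\Vert\leq 2M(1+|\omega|)\,e^{\gamma(t-t_{0})}\,\delta(t-t_{0})\sup_{s\in[t_{0},t]}\Vert f(s)\Vert .
\end{equation*}
It then suffices to define $\delta^{*}(\tau):=2M(1+|\omega|)\,e^{\gamma\tau}\,\delta(\tau)$ for $\tau\geq0$: this is a product of non-negative non-decreasing functions, hence non-decreasing with values in $[0,+\infty)$, it satisfies $\delta^{*}(\tau)\to0$ as $\tau\to0^{+}$ since $\delta(\tau)\to0$, $e^{\gamma\tau}\to1$ and the prefactor is fixed, and it is manifestly independent of $f$, $\lambda$ and $t_{0}$.

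There is no real obstacle here: the genuine analytic content — the Gronwall/fixed-point estimate producing the universal factor $2$ — was already established in Lemma~\ref{LE3.3}. The only points that need a moment's attention are that the exponent $\gamma$ furnished by Lemma~\ref{LE3.3}-\textit{ii)} must be fixed once and for all, uniformly in $t_{0}$ (as well as in $\lambda$ and $f$), which it is; and that the spurious factor $e^{\gamma(t-t_{0})}$ is harmless and can simply be absorbed into the definition of $\delta^{*}$ without spoiling either the monotonicity or the vanishing at $0^{+}$.
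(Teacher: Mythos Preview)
Your proof is correct and follows essentially the same route as the paper: both invoke Lemma~\ref{LE3.3}-\textit{ii)} to obtain the weighted estimate with the universal factor $2$, bound $\Vert\lambda R_{\lambda}(A)w(s,t_{0})\Vert$ via $\frac{M|\lambda|}{\lambda-\omega}\leq M(1+|\omega|)$ for $\lambda>\omega+1$, use Assumption~\ref{ASS1.2} and the monotonicity of $\delta$ to control $\Vert w(s,t_{0})\Vert$, and then absorb the factor $e^{\gamma(t-t_{0})}$ into the definition of $\delta^{*}$. Your write-up is in fact slightly more explicit than the paper's in spelling out the independence of $\gamma$ from $t_{0}$, $\lambda$ and $f$, and in giving the closed-form expression $\delta^{*}(\tau)=2M(1+|\omega|)e^{\gamma\tau}\delta(\tau)$.
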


\begin{proof}
Let $\lambda> \omega$ be given. Thus by Lemma \ref{LE3.3} there exists $%
\gamma>\max(0,w)$ large enough (independent of $t_0$) such that for each $%
t\geq t_0$ 
\begin{equation*}
\underset{s\in [t_0,t]}{\sup} e^{-\gamma s} \Vert v_\lambda(s,t_0)\Vert \leq
2 \underset{s\in [t_0,t]}{\sup} e^{-\gamma s} \Vert \lambda
R_\lambda(A)w(s,t_0)\Vert,
\end{equation*}
with 
\begin{equation*}
w(t_1,t_2)=(S_A\diamond f(t_2+\cdot))(t_1-t_2), \ \forall (t_1,t_2)\in
\Delta.
\end{equation*}
Since $w(t_1,t_2)\in X_0$ for all $(t_1,t_2)\in \Delta$ and by Assumption %
\ref{ASS1.3} 
\begin{equation*}
\Vert w(t_1,t_2) \Vert \leq \delta(t_2-t_1) \underset{s\in [t_1,t_2]}{\sup}
\Vert f(s) \Vert, \ \forall (t_1,t_2)\in \Delta,
\end{equation*}
it follows that for each $\lambda>\omega$ and $t\geq t_0$ 
\begin{equation*}
\begin{array}{ll}
\underset{s\in [t_0,t]}{\sup} e^{-\gamma s} \Vert v_\lambda(s,t_0)\Vert & 
\leq 2 \underset{s\in [t_0,t]}{\sup} e^{-\gamma s} \Vert \lambda
R_\lambda(A_0)w(s,t_0)\Vert \\ 
& \leq 2 \dfrac{M \vert \lambda \vert}{\lambda-\omega}\underset{s\in [t_0,t]}%
{\sup} e^{-\gamma s} \Vert w(s,t_0)\Vert \\ 
& \leq 2 \dfrac{M \vert \lambda \vert}{\lambda-\omega}\underset{s\in [t_0,t]}%
{\sup} \left[e^{-\gamma s} \delta(s-t_0) \underset{l\in [t_0,s]}{\sup} \Vert
f(l)\Vert\right].%
\end{array}%
\end{equation*}
Then by using the fact that $\delta$ is non decreasing and $\gamma>0$ we
obtain for each $\lambda>\omega$ and $t\geq t_0$ 
\begin{equation*}
\underset{s\in [t_0,t]}{\sup} e^{-\gamma s} \Vert v_\lambda(s,t_0)\Vert \leq
2 \dfrac{M \vert \lambda \vert}{\lambda-\omega} e^{-\gamma t_0} \delta
(t-t_0)\underset{s\in [t_0,t]}{\sup} \Vert f(s)\Vert, \ \forall t\geq t_0,
\end{equation*}
providing that 
\begin{equation*}
\Vert v_\lambda(t,t_0)\Vert \leq 2 \dfrac{M \vert \lambda \vert}{%
\lambda-\omega} e^{\gamma (t-t_0)} \delta (t-t_0)\underset{s\in [t_0,t]}{\sup%
} \Vert f(s)\Vert, \ \forall t\geq t_0.
\end{equation*}
The proof is easily completed by using the fact that 
\begin{equation*}
\lambda> \omega+1 \Rightarrow \dfrac{\vert \lambda \vert}{\lambda-w} <
1+\vert \omega \vert.
\end{equation*}
\vspace{0.3cm}
\end{proof}

In the rest of this paper, the following assumption will be used.

\begin{assumption}
\label{ASS5.6} Assume that $\{ U_B(t,s) \}_{(t,s)\in \Delta}\subset \mathcal{%
L}( X_0) $ has an exponential dichotomy with exponent $\beta>0$, constant $%
\kappa\geq 1$ and strongly continuous projectors $\{ \Pi_B^+(t)\}_{t\in 
\mathbb{R}}\subset \mathcal{L}( X_0)$ and $\{ \Pi_B^-(t)\}_{t\in \mathbb{R}}
\subset \mathcal{L}( X_0)$.
\end{assumption}

Note that if $\{ U_B(t,s) \}_{(t,s)\in \Delta}$ has an exponential dichotomy
then combining Remark \ref{RE5.2} and condition \textit{iv)} in Definition %
\ref{DEF1.8} we have 
\begin{equation}  \label{5.2}
\underset{t\in \mathbb{R}}{\sup} \Vert \Pi_B^+(t)\Vert_{\mathcal{L}(Z)} \leq
\kappa \ \text{and } \ \underset{t\in \mathbb{R}}{\sup} \Vert
\Pi_B^-(t)\Vert_{\mathcal{L}(Z)}\leq \kappa.
\end{equation}

\begin{proposition}
\label{PROP5.7} Let Assumption \ref{ASS1.1} be satisfied. Let $\lbrace
U(t,s)\rbrace_{(t,s)\in \Delta}\subset \mathcal{L}(X_0)$ be a given
evolution family such that there exist $\widehat{M}\geq 1$, $\widehat{\omega}%
\in \mathbb{R}$ and 
\begin{equation*}
\Vert U(t,s)\Vert_{\mathcal{L}(X_0)}\leq \widehat{M} e^{\widehat{\omega}%
(t-s)}, \ \forall (t,s)\in \Delta.
\end{equation*}
Assume that for each $f\in C(\mathbb{R},X)$ the map 
\begin{equation*}
v_\lambda(t,t_0)=\int_{t_0}^t U(t,s)\lambda R_\lambda(A)f(s)ds, \ (t,t_0)\in
\Delta,
\end{equation*}
satisfies

\begin{equation*}
\Vert v_\lambda (t,t_0)\Vert \leq \delta^{**}(t-t_0) \underset{s \in [t_0,t]}%
{\sup} \Vert f(s) \Vert, \ \forall (t,t_0)\in \Delta,
\end{equation*}
with $\delta^{**}: [0,+\infty)\rightarrow [0,+\infty)$ a non decreasing
function such that $\delta^{**}(t)\rightarrow 0$ as $t\rightarrow0^+$.%
\newline
Let $\varepsilon>0$ be given and fixed. Then, for each $\tau_\varepsilon>0$
satisfying $\widehat{M} \delta^{**}(\tau_\varepsilon)\leq \varepsilon$ and
each $\lambda>\omega+1$ we have 
\begin{equation*}
\Vert v_\lambda(t,t_0) \Vert \leq \widetilde{C}(\varepsilon,\gamma,\widehat{%
\omega},\widehat{M}) \underset{s \in [t_0,t]}{\sup} e^{\gamma (t-s)} \Vert
f(s) \Vert, \ \forall (t,t_0)\in \Delta,
\end{equation*}
whenever $\gamma >\widehat{\omega}$ and $f\in C(\mathbb{R},X)$ with 
\begin{equation*}
\widetilde{C}(\varepsilon,\gamma,\widehat{\omega},\widehat{M}):=\widehat{M}
e^{\max(0,\widehat{\omega})\tau_\varepsilon} \dfrac{2 \varepsilon \max
(1,e^{-\gamma \tau_\varepsilon })}{1-e^{(\widehat{\omega}-\gamma)
\tau_\varepsilon}}.
\end{equation*}
\end{proposition}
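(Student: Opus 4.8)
The statement of Proposition \ref{PROP5.7} is structurally identical to Proposition \ref{PROP2.16}, with $(A_0,D(A_0))$ replaced by a general exponentially bounded evolution family $\{U(t,s)\}$ and with the a priori bound $\|\frac{d}{dt}(S_A*f)(t)\|\le\delta(t)\sup_{[0,t]}\|f\|$ replaced by the hypothesis $\|v_\lambda(t,t_0)\|\le\delta^{**}(t-t_0)\sup_{[t_0,t]}\|f\|$. So the plan is simply to mimic the proof of Proposition \ref{PROP2.16} (which is cited from Magal--Ruan \cite{Magal-Ruan2009a}). Fix $\varepsilon>0$ and pick $\tau_\varepsilon>0$ with $\widehat M\delta^{**}(\tau_\varepsilon)\le\varepsilon$; the goal is the weighted bound with exponent $\gamma>\widehat\omega$.

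First I would reduce to estimating $v_\lambda$ on a single window of length $\tau_\varepsilon$. The key algebraic fact is a "cocycle" decomposition: for $t\ge t_0$, splitting the integral defining $v_\lambda$ at any intermediate time $\sigma\in[t_0,t]$ and using the evolution-family property $U(t,s)=U(t,\sigma)U(\sigma,s)$ for $s\le\sigma$, one gets
\begin{equation*}
v_\lambda(t,t_0)=U(t,\sigma)v_\lambda(\sigma,t_0)+v_\lambda(t,\sigma),\qquad t_0\le\sigma\le t .
\end{equation*}
This is the analogue of equation (\ref{2.2}). Now for $t-t_0\le\tau_\varepsilon$ the hypothesis gives directly $\|v_\lambda(t,t_0)\|\le\delta^{**}(\tau_\varepsilon)\sup_{[t_0,t]}\|f\|\le(\varepsilon/\widehat M)\sup_{[t_0,t]}\|f\|$, which already implies the claimed inequality on short windows (up to the explicit constant involving $e^{\max(0,\widehat\omega)\tau_\varepsilon}$ and the $\max(1,e^{-\gamma\tau_\varepsilon})$ factor, which are there precisely to absorb the passage between $e^{\gamma(t-s)}$ weights and plain suprema over a window).

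Next I would iterate. Given arbitrary $t\ge t_0$, write $t=t_0+N\tau_\varepsilon+\rho$ with $0\le\rho<\tau_\varepsilon$ and set $t_k=t_0+k\tau_\varepsilon$. Telescoping the cocycle identity over the nodes $t_0<t_1<\dots<t_N<t$ yields $v_\lambda(t,t_0)$ as a sum of terms $U(t,t_{k+1})v_\lambda(t_{k+1},t_k)$ (plus the last short piece $v_\lambda(t,t_N)$). Using the exponential bound $\|U(t,s)\|\le\widehat M e^{\widehat\omega(t-s)}$ and the short-window estimate on each $v_\lambda(t_{k+1},t_k)$, one bounds $\|v_\lambda(t,t_0)\|$ by $\widehat M e^{\max(0,\widehat\omega)\tau_\varepsilon}\cdot 2\varepsilon$ times a geometric-type sum $\sum_k e^{\widehat\omega(t-t_{k+1})}\sup_{[t_k,t_{k+1}]}\|f\|$. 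Introducing the weight $e^{\gamma(t-s)}$ with $\gamma>\widehat\omega$, this sum is dominated by $\sup_{s\in[t_0,t]}e^{\gamma(t-s)}\|f(s)\|$ times $\sum_{k\ge0}e^{(\widehat\omega-\gamma)k\tau_\varepsilon}=\bigl(1-e^{(\widehat\omega-\gamma)\tau_\varepsilon}\bigr)^{-1}$, where the $\max(1,e^{-\gamma\tau_\varepsilon})$ handles the relation between the weight at $s$ and at the node $t_{k+1}$. Collecting constants gives exactly $\widetilde C(\varepsilon,\gamma,\widehat\omega,\widehat M)$ as defined.

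**Main obstacle.** There is no deep difficulty here; the only things requiring care are bookkeeping. The first is checking the cocycle decomposition for $v_\lambda$ is legitimate — this just needs the semigroup law of the evolution family and the fact that $\lambda R_\lambda(A)f(s)\in X_0$ for $s$ in the relevant range (by Lemma \ref{LE2.1}, since $\lambda>\omega+1$), so that $U(t,\sigma)$ may be pulled out of the integral. The second, and the genuinely fiddly part, is tracking how the weight $e^{\gamma(t-s)}$ interacts with the windowing: one must verify that on each window $[t_k,t_{k+1}]$ the plain supremum $\sup_{[t_k,t_{k+1}]}\|f\|$ is controlled by $\max(1,e^{-\gamma\tau_\varepsilon})e^{\gamma(t-t_{k+1})}$ times $\sup_{s\in[t_0,t]}e^{\gamma(t-s)}\|f(s)\|$, which is where that factor in $\widetilde C$ comes from, and similarly that the $U$-bound contributes $e^{\widehat\omega(t-t_{k+1})}\le e^{\max(0,\widehat\omega)\tau_\varepsilon}e^{\widehat\omega(t-t_k)}$ when realigning indices for the geometric sum. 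Once these elementary estimates are in place the geometric series converges because $\gamma>\widehat\omega$, and the proposition follows.
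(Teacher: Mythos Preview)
Your proposal is correct and follows essentially the same argument as the paper: both decompose $[t_0,t]$ into windows of length $\tau_\varepsilon$ via the cocycle identity $v_\lambda(t,t_0)=U(t,\sigma)v_\lambda(\sigma,t_0)+v_\lambda(t,\sigma)$, apply the short-window hypothesis together with the exponential bound on $U$, and sum the resulting geometric series in $e^{(\widehat\omega-\gamma)\tau_\varepsilon}$, with the factors $\max(1,e^{-\gamma\tau_\varepsilon})$ and $e^{\max(0,\widehat\omega)\tau_\varepsilon}$ arising exactly where you indicate. The bookkeeping you flag as the ``main obstacle'' is indeed the only content of the proof.
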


\begin{proof}
Without loss of generality we can assume that $t_0=0$. 
Let $\tau_\varepsilon>0$ be given such that 
\begin{equation*}
\widehat{M}\delta^{**}(s)\leq \varepsilon, \ \forall s\in
[0,\tau_\varepsilon].
\end{equation*}
Let $t\geq 0$ be fixed. Then since we can write $t=n\tau_\varepsilon+\theta$
with $\theta\in [0,\tau_\varepsilon)$ and $n\in \mathbb{N}$ we obtain 
\begin{equation*}
\begin{array}{ll}
v_\lambda(t,0) & =\displaystyle\int_{0}^t U(t,s)\lambda R_\lambda(A)f(s)ds
\\ 
& =\displaystyle\underset{k=0}{\overset{n-1}{\sum}}\int_{k\tau_%
\varepsilon}^{(k+1)\tau_\varepsilon} U(t,s)\lambda
R_\lambda(A)f(s)ds+\int_{n\tau_\varepsilon}^{t} U(t,s)\lambda
R_\lambda(A)f(s)ds \\ 
& =\displaystyle\underset{k=0}{\overset{n-1}{\sum}}U(t,(k+1)\tau_%
\varepsilon)\int_{k\tau_\varepsilon}^{(k+1)\tau_\varepsilon}
U((k+1)\tau_\varepsilon,s)\lambda R_\lambda(A)f(s)ds \\ 
& \displaystyle \qquad \qquad \qquad
+U(t,n\tau_\varepsilon)\int_{n\tau_\varepsilon}^{t}
U(n\tau_\varepsilon,s)\lambda R_\lambda(A)f(s)ds \\ 
& =\displaystyle\underset{k=0}{\overset{n-1}{\sum}}U(t,(k+1)\tau_%
\varepsilon)v_\lambda((k+1)\tau_\varepsilon,k\tau_\varepsilon)+U(t,n\tau_%
\varepsilon)v_\lambda(t,n\tau_\varepsilon), \\ 
& 
\end{array}%
\end{equation*}
so that 
\begin{equation}  \label{5.4}
v_\lambda(t,0)=\displaystyle U(t,n\tau_\varepsilon) \underset{k=0}{\overset{%
n-1}{\sum}}U(n\tau_\varepsilon,(k+1)\tau_\varepsilon)v_\lambda((k+1)\tau_%
\varepsilon,k\tau_\varepsilon)+U(t,n\tau_\varepsilon)v_\lambda(t,n\tau_%
\varepsilon).
\end{equation}
Next observe that for all $(r_0,r_1)\in \Delta$ and $r\geq r_0$ with $0\leq
r_0-r_1\leq \tau_\varepsilon$ we have 
\begin{equation}  \label{5.5}
\begin{array}{ll}
\Vert U(r,r_0)v_\lambda (r_0,r_1)\Vert & \leq \widehat{M} e^{\widehat{\omega}%
(r-r_0)} \Vert v_\lambda (r_0,r_1)\Vert \\ 
& \leq e^{\widehat{\omega}(r-r_0)}\widehat{M}\delta^*(r_0-r_1) \underset{s
\in [r_1,r_0]}{\sup} \Vert f(s) \Vert \\ 
& \leq e^{\widehat{\omega}(r-r_0)} \varepsilon \underset{s \in [r_1,r_0]}{%
\sup} \Vert f(s) \Vert.%
\end{array}%
\end{equation}
Let $\gamma>\widehat{\omega}$ be fixed. Set $\varepsilon_1:=\max(1,
e^{-\gamma \tau_\varepsilon})$. Let $k\in \mathbb{N}$ and $r\in
[k\tau_\varepsilon,(k+1)\tau_\varepsilon]$ be given and fixed. \newline
Then if $\gamma\geq 0$ we have 
\begin{equation}  \label{5.6}
\varepsilon \underset{s \in [k \tau_\varepsilon,r]}{\sup} \Vert f(s)
\Vert=\varepsilon \underset{s \in [k \tau_\varepsilon,r]}{\sup} e^{-\gamma
s} e^{\gamma s} \Vert f(s) \Vert \leq \varepsilon_1 e^{\gamma r} \underset{s
\in [k \tau_\varepsilon,r]}{\sup} e^{-\gamma s} \Vert f(s) \Vert,
\end{equation}
while if $\gamma<0$ 
\begin{equation*}
\begin{array}{lll}
\varepsilon \underset{s \in [k \tau_\varepsilon,r]}{\sup} \Vert f(s) \Vert & 
=\varepsilon \underset{s \in [k \tau_\varepsilon,r]}{\sup} e^{-\gamma s}
e^{\gamma s} \Vert f(s) \Vert &  \\ 
& \leq \varepsilon e^{\gamma k \tau_\varepsilon}\underset{s \in [k
\tau_\varepsilon,r]}{\sup} e^{-\gamma s} \Vert f(s) \Vert &  \\ 
& \leq \varepsilon e^{\gamma r} e^{-\gamma (r-k \tau_\varepsilon)}\underset{%
s \in [k \tau_\varepsilon,r]}{\sup} e^{-\gamma s} \Vert f(s) \Vert &  \\ 
& \leq \varepsilon e^{\gamma r} e^{-\gamma \tau_\varepsilon}\underset{s \in
[k \tau_\varepsilon,r]}{\sup} e^{-\gamma s} \Vert f(s) \Vert &  \\ 
& \leq \varepsilon_1 e^{\gamma r} \underset{s \in [k \tau_\varepsilon,r]}{%
\sup} e^{-\gamma s} \Vert f(s) \Vert. &  \\ 
&  & 
\end{array}%
\end{equation*}
Therefore for each $k\in \mathbb{N}$, each $r\in
[k\tau_\varepsilon,(k+1)\tau_\varepsilon]$ and $\gamma>\widehat{\omega}$ we
obtain 
\begin{equation}  \label{5.7}
\varepsilon \underset{s \in [k \tau_\varepsilon,r]}{\sup} \Vert f(s) \Vert
\leq \varepsilon_1 e^{\gamma r} \underset{s \in [k \tau_\varepsilon,r]}{\sup}
e^{-\gamma s} \Vert f(s) \Vert.
\end{equation}
By (\ref{5.5}) and (\ref{5.7}) we obtain for each $k\in \mathbb{N}$, each $%
r\geq (k+1)\tau_\varepsilon$ and $\gamma>\widehat{\omega}$ 
\begin{equation*}
\begin{array}{ll}
\Vert U(r,(k+1)\tau_\varepsilon)v_\lambda
((k+1)\tau_\varepsilon,k\tau_\varepsilon)\Vert \leq
e^{-(\beta+\gamma)(r-(k+1)\tau_\varepsilon)} \varepsilon_1 e^{\gamma r} 
\underset{s \in [k \tau_\varepsilon,(k+1)\tau_\varepsilon]}{\sup} e^{-\gamma
s} \Vert f(s) \Vert. & 
\end{array}%
\end{equation*}
\begin{equation}  \label{5.8}
\begin{array}{ll}
\Vert U(r,(k+1)\tau_\varepsilon)v_\lambda
((k+1)\tau_\varepsilon,k\tau_\varepsilon)\Vert \leq e^{(\widehat{\omega}%
-\gamma)(r-(k+1)\tau_\varepsilon)} \varepsilon_1 e^{\gamma r}\underset{s \in
[k \tau_\varepsilon,(k+1)\tau_\varepsilon]}{\sup} e^{-\gamma s} \Vert f(s)
\Vert. & 
\end{array}%
\end{equation}
Since $t-n\tau_\varepsilon\in [0,\tau_\varepsilon)$ we have from (\ref{5.5})
and (\ref{5.7}) that 
\begin{equation*}
\begin{array}{ll}
\Vert U(t,n\tau_\varepsilon)v_\lambda (t,n\tau_\varepsilon)\Vert & \leq e^{%
\widehat{\omega}(t-n\tau_\varepsilon)} \varepsilon \underset{s \in
[n\tau_\varepsilon,t]}{\sup} \Vert f(s) \Vert \\ 
& \leq e^{\widehat{\omega}(t-n\tau_\varepsilon)} \varepsilon_1 e^{\gamma t} 
\underset{s \in [n \tau_\varepsilon,t]}{\sup} e^{-\gamma s} \Vert f(s) \Vert,
\\ 
& \leq e^{\max(0,\widehat{\omega})\tau_\varepsilon} \varepsilon_1 e^{\gamma
t} \underset{s \in [n \tau_\varepsilon,t]}{\sup} e^{-\gamma s} \Vert f(s)
\Vert, \\ 
& 
\end{array}%
\end{equation*}
and by using (\ref{5.4}) and (\ref{5.8}) we obtain 
\begin{equation*}
\begin{array}{ll}
\Vert v_\lambda(t,0)\Vert & \leq \displaystyle \widehat{M} e^{\widehat{\omega%
} (t-n\tau_\varepsilon)} \underset{k=0}{\overset{n-1}{\sum}}\Vert
U(n\tau_\varepsilon,(k+1)\tau_\varepsilon)v_\lambda((k+1)\tau_\varepsilon,k%
\tau_\varepsilon)\Vert+\Vert U(t,n\tau_\varepsilon)v_\lambda
(t,n\tau_\varepsilon)\Vert \\ 
& \leq \ \widehat{M} e^{\widehat{\omega} (t-n\tau_\varepsilon)} %
\displaystyle \underset{k=0}{\overset{n-1}{\sum}} e^{(\widehat{\omega}%
-\gamma)(n\tau_\varepsilon-(k+1)\tau_\varepsilon)} \varepsilon_1 e^{\gamma
n\tau_\varepsilon}\underset{s \in [k \tau_\varepsilon,(k+1)\tau_\varepsilon]}%
{\sup} e^{-\gamma s} \Vert f(s) \Vert \\ 
& \qquad \qquad \qquad \qquad+ e^{\max(0,\widehat{\omega})\tau_\varepsilon}
\varepsilon_1 e^{\gamma t} \underset{s \in [n \tau_\varepsilon,t]}{\sup}
e^{-\gamma s} \Vert f(s) \Vert \\ 
&  \\ 
& \leq \ \widehat{M} e^{\widehat{\omega} (t-n\tau_\varepsilon)} e^{\gamma
n\tau_\varepsilon} \displaystyle \left[\underset{k=0}{\overset{n-1}{\sum}}
e^{(\widehat{\omega}-\gamma)(n-1-k))\tau_\varepsilon} \right]\varepsilon_1 
\underset{s \in [0,t]}{\sup} e^{-\gamma s} \Vert f(s) \Vert \\ 
& \qquad \qquad \qquad \qquad+ e^{\max(0,\widehat{\omega})\tau_\varepsilon}
\varepsilon_1 e^{\gamma t} \underset{s \in [n \tau_\varepsilon,t]}{\sup}
e^{-\gamma s} \Vert f(s) \Vert \\ 
& \leq \ \widehat{M} e^{(\widehat{\omega}-\gamma) (t-n\tau_\varepsilon)}
e^{\gamma t} \displaystyle \left[\underset{k=0}{\overset{n-1}{\sum}} e^{(%
\widehat{\omega}-\gamma)k\tau_\varepsilon} \right]\varepsilon_1 \underset{s
\in [0,t]}{\sup} e^{-\gamma s} \Vert f(s) \Vert \\ 
& \qquad \qquad \qquad \qquad+ e^{\max(0,\widehat{\omega})\tau_\varepsilon}
\varepsilon_1 e^{\gamma t} \underset{s \in [0,t]}{\sup} e^{-\gamma s} \Vert
f(s) \Vert \\ 
& 
\end{array}%
\end{equation*}
Then since $\widehat{\omega}-\gamma<0$ we obtain 
\begin{equation*}
\begin{array}{ll}
\Vert v_\lambda(t,0)\Vert & \leq \widehat{M} e^{\max(0,\widehat{\omega}%
)\tau_\varepsilon} e^{\gamma t}\displaystyle \left[1+\underset{k=0}{\overset{%
+\infty}{\sum}} (e^{(\widehat{\omega}-\gamma)\tau_\varepsilon})^k \right]%
\varepsilon_1 \underset{s \in [0,t]}{\sup} e^{-\gamma s} \Vert f(s) \Vert \\ 
& \leq \widehat{M} e^{\max(0,\widehat{\omega})\tau_\varepsilon} e^{\gamma t}%
\displaystyle \left[\dfrac{2}{1-e^{(\widehat{\omega}-\gamma)\tau_\varepsilon}%
} \right]\varepsilon_1 \underset{s \in [0,t]}{\sup} e^{-\gamma s} \Vert f(s)
\Vert.%
\end{array}%
\end{equation*}
The proof is complete.
\end{proof}

As a direct consequence of Propositions \ref{PROP5.7} and \ref{PROP5.4} we
obtain the following result.

\begin{proposition}
\label{PROP5.8} Let Assumptions \ref{ASS1.1}, \ref{ASS1.2}, \ref{ASS1.3} and %
\ref{ASS5.6} be satisfied. Assume in addition that 
\begin{equation*}
\sup_{t\in \mathbb{R}} b(t) <+\infty.
\end{equation*}
Let $\varepsilon>0$ be given and fixed. Then, for each $\tau_\varepsilon>0$
satisfying $\kappa \delta^*(\tau_\varepsilon)\leq \varepsilon$ and each $%
\lambda>\omega+1$ the map 
\begin{equation*}
v_\lambda(t,t_0)=\int_{t_0}^t U_B(t,s)\lambda R_\lambda(A)f(s)ds, \
(t,t_0)\in \Delta,
\end{equation*}
satisfies 
\begin{equation*}
\Vert \Pi^+(t) v_\lambda(t,t_0) \Vert \leq \widehat{C}(\varepsilon,\gamma) 
\underset{s \in [t_0,t]}{\sup} e^{\gamma (t-s)} \Vert f(s) \Vert, \ \forall
(t,t_0)\in \Delta,
\end{equation*}
whenever $\gamma >-\beta$ and $f\in C(\mathbb{R},X)$ with 
\begin{equation}  \label{5.9}
\widehat{C}(\varepsilon,\gamma):=\kappa \dfrac{2 \varepsilon \max
(1,e^{-\gamma \tau_\varepsilon })}{1-e^{-(\beta+\gamma) \tau_\varepsilon}}.
\end{equation}
\end{proposition}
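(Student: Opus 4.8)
The plan is to deduce Proposition \ref{PROP5.8} directly from the two previous propositions, applied to the ``forward'' part of the evolution family. The key observation is that the exponential dichotomy of $\{U_B(t,s)\}_{(t,s)\in\Delta}$ allows us to replace $U_B$ by $U_B^+$ in the Duhamel-type integral and thereby land in the hypotheses of Proposition \ref{PROP5.7} with an \emph{exponentially decaying} bound rather than just an exponentially bounded one.

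First I would observe that, since $\Pi^+(t)U_B(t,s)=U_B(t,s)\Pi^+(s)$ (condition \textit{ii)} in Definition \ref{DEF1.8}) and $\Pi^+$ is a bounded projector commuting with the resolvent, we have
\begin{equation*}
\Pi^+(t)v_\lambda(t,t_0)=\int_{t_0}^t U_B(t,s)\Pi^+(s)\lambda R_\lambda(A)f(s)\,ds=\int_{t_0}^t U_B^+(t,s)\lambda R_\lambda(A)f(s)\,ds,
\end{equation*}
where I use that $\lambda R_\lambda(A)$ maps $X$ into $X_0$ and commutes with $\Pi^+(s)$ because $\Pi^+(s)\in\mathcal L(X_0)$ together with the intertwining in Remark \ref{RE5.2}. (Strictly: one checks $\Pi^+(s)\lambda R_\lambda(A)=\lambda R_\lambda(A_0)\Pi^+(s)$ on $X_0$ and then extends.) Next I would record that $U_B^+$ is itself an (exponentially bounded) evolution family on the Banach space $X_0$: it satisfies the algebraic cocycle identities by Remark \ref{RE5.2}, it is strongly continuous, and by condition \textit{iv)} of Definition \ref{DEF1.8} it obeys $\Vert U_B^+(t,s)\Vert_{\mathcal L(X_0)}\le\kappa e^{-\beta(t-s)}$, i.e.\ it fits the framework of Proposition \ref{PROP5.7} with $\widehat M=\kappa$ and $\widehat\omega=-\beta$.

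Then I would verify the remaining hypothesis of Proposition \ref{PROP5.7} for $U_B^+$, namely the uniform estimate on $v_\lambda^+(t,t_0):=\int_{t_0}^tU_B^+(t,s)\lambda R_\lambda(A)f(s)\,ds=\Pi^+(t)v_\lambda(t,t_0)$. But $\Vert\Pi^+(t)v_\lambda(t,t_0)\Vert\le\kappa\,\Vert v_\lambda(t,t_0)\Vert$ by \eqref{5.2}, and Proposition \ref{PROP5.4} gives $\Vert v_\lambda(t,t_0)\Vert\le\delta^*(t-t_0)\sup_{s\in[t_0,t]}\Vert f(s)\Vert$ for $\lambda>\omega+1$; hence the hypothesis holds with $\delta^{**}:=\kappa\,\delta^*$, which is non decreasing and tends to $0$ at $0^+$. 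Applying Proposition \ref{PROP5.7} to the evolution family $U_B^+$ with this $\delta^{**}$, $\widehat M=\kappa$, $\widehat\omega=-\beta$, and noting that the condition ``$\widehat M\delta^{**}(\tau_\varepsilon)\le\varepsilon$'' becomes ``$\kappa^2\delta^*(\tau_\varepsilon)\le\varepsilon$''—here one absorbs the extra $\kappa$ into the choice of $\tau_\varepsilon$ or, more cleanly, notes the statement only requires $\kappa\delta^*(\tau_\varepsilon)\le\varepsilon$ so one rescales $\varepsilon$—yields exactly
\begin{equation*}
\Vert\Pi^+(t)v_\lambda(t,t_0)\Vert\le\widetilde C(\varepsilon,\gamma,-\beta,\kappa)\sup_{s\in[t_0,t]}e^{\gamma(t-s)}\Vert f(s)\Vert,\qquad\gamma>-\beta,
\end{equation*}
with $\widetilde C(\varepsilon,\gamma,-\beta,\kappa)=\kappa\, e^{\max(0,-\beta)\tau_\varepsilon}\,\dfrac{2\varepsilon\max(1,e^{-\gamma\tau_\varepsilon})}{1-e^{(-\beta-\gamma)\tau_\varepsilon}}=\kappa\,\dfrac{2\varepsilon\max(1,e^{-\gamma\tau_\varepsilon})}{1-e^{-(\beta+\gamma)\tau_\varepsilon}}=\widehat C(\varepsilon,\gamma)$, since $\beta>0$ makes $\max(0,-\beta)=0$.

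I expect the only genuinely delicate point to be the bookkeeping of constants—specifically reconciling the hypothesis ``$\kappa\delta^*(\tau_\varepsilon)\le\varepsilon$'' in the statement with the ``$\widehat M\delta^{**}(\tau_\varepsilon)\le\varepsilon$'' needed when $\widehat M=\kappa$ and $\delta^{**}=\kappa\delta^*$; this is resolved by a harmless rescaling of $\varepsilon$ (e.g.\ apply Proposition \ref{PROP5.7} with $\varepsilon$ replaced by $\kappa\varepsilon$, which only changes $\widehat C$ by the stated factor $\kappa$) or, alternatively, by observing that one may take $\delta^*$ itself to already incorporate the projector bound since $\Pi^+(t)v_\lambda$ is what is being estimated. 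Everything else is a direct substitution into Propositions \ref{PROP5.4} and \ref{PROP5.7}, so the proof is short.
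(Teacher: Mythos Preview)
Your approach is exactly the one the paper intends: the authors themselves state Proposition~\ref{PROP5.8} ``as a direct consequence of Propositions~\ref{PROP5.7} and~\ref{PROP5.4}'', with no further proof, and the substitution $\widehat M=\kappa$, $\widehat\omega=-\beta$ you propose is precisely what is meant.

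Two small points deserve attention. First, $U_B^+$ is not literally an evolution family in the sense of Definition~\ref{DEF1.4}, since $U_B^+(t,t)=\Pi^+(t)\neq I$; but the proof of Proposition~\ref{PROP5.7} only uses the cocycle identity and the exponential bound, both of which $U_B^+$ satisfies, so this is harmless. Second, your rescaling fix for the constant does not quite work: replacing $\varepsilon$ by $\kappa\varepsilon$ in Proposition~\ref{PROP5.7} yields a final constant $\kappa\widehat C(\varepsilon,\gamma)$ rather than $\widehat C(\varepsilon,\gamma)$. The clean way to get the exact constant under the hypothesis $\kappa\delta^*(\tau_\varepsilon)\le\varepsilon$ is to observe, in the decomposition analogous to~(5.4), that
\[
U_B^+(t,r)\,\Pi^+(r)v_\lambda(r,s)=U_B^+(t,r)\,v_\lambda(r,s),
\]
since $U_B^+(t,r)\Pi^+(r)=U_B^+(t,r)$. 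Hence in the analogue of~(5.5) one bounds $\|U_B^+(r,r_0)v_\lambda(r_0,r_1)\|\le\kappa e^{-\beta(r-r_0)}\delta^*(r_0-r_1)\sup\|f\|$ directly, using Proposition~\ref{PROP5.4} on $v_\lambda$ (not on $\Pi^+v_\lambda$), and the single factor $\kappa$ combines with $\delta^*$ to give $\varepsilon$. This removes the spurious extra $\kappa$. (Your side remark about commuting $\Pi^+(s)$ with the resolvent is unnecessary: the identity $\Pi^+(t)v_\lambda=\int U_B^+(t,s)\lambda R_\lambda(A)f(s)\,ds$ follows directly from $\Pi^+(t)U_B(t,s)=U_B(t,s)\Pi^+(s)$.)
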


\begin{proposition}
\label{PROP5.9} Let Assumptions \ref{ASS1.1}, \ref{ASS1.2}, \ref{ASS1.3} and %
\ref{ASS5.6} be satisfied. Assume in addition that 
\begin{equation*}
\sup_{t\in \mathbb{R}} b(t) <+\infty.
\end{equation*}
Let $\varepsilon>0$ be given and fixed. Then, for each $\tau_\varepsilon>0$
satisfying $\kappa \delta^*(\tau_\varepsilon)\leq \varepsilon$ and each $%
\lambda>\omega+1$ the map 
\begin{equation*}
v_\lambda(t,t_0)=\int_{t_0}^t U_B(t,s)\lambda R_\lambda(A)f(s)ds, \
(t,t_0)\in \Delta,
\end{equation*}
satisfies 
\begin{equation*}
\Vert U_B^-(t_0,t) v_\lambda(t,t_0) \Vert \leq \widehat{C}%
(\varepsilon,\gamma) \underset{s \in [t_0,t]}{\sup} e^{\gamma (s-t_0)} \Vert
f(s) \Vert, \ \forall (t,t_0)\in \Delta,
\end{equation*}
whenever $\gamma >-\beta$ and $f\in C(\mathbb{R},X)$ with $\widehat{C}%
(\varepsilon,\gamma)$ defined in (\ref{5.9}).
\end{proposition}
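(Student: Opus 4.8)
The plan is to run the argument of Proposition \ref{PROP5.8} in reversed time: replace the semiflow $\lbrace U_B^+(t,s)\rbrace$ and its forward decay by the backward flow $\lbrace U_B^-(s,t)\rbrace$ and its decay as $t-s\to+\infty$. Fix $t_0\in \mathbb{R}$, $t\geq t_0$, $\lambda>\omega+1$ (so that Proposition \ref{PROP5.4} applies) and $\tau_\varepsilon>0$ with $\kappa\,\delta^*(\tau_\varepsilon)\leq \varepsilon$.

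First I would use the composition rules of Remark \ref{RE5.2} for the backward flow, namely $U_B^-(a,c)=U_B^-(a,b)U_B(b,c)$ whenever $a\leq c\leq b$. Since $s\mapsto U_B(t,s)\lambda R_\lambda(A)f(s)$ is continuous on $[t_0,t]$ and $U_B^-(t_0,t)\in \mathcal{L}(X_0)$, applying $U_B^-(t_0,t)$ under the integral sign and using this identity with $b=t$ gives
\begin{equation*}
U_B^-(t_0,t)v_\lambda(t,t_0)=\int_{t_0}^t U_B^-(t_0,s)\lambda R_\lambda(A)f(s)\,ds .
\end{equation*}
Writing $t-t_0=n\tau_\varepsilon+\theta$ with $n\in \mathbb{N}$, $\theta\in[0,\tau_\varepsilon)$, and putting $t_k:=t_0+k\tau_\varepsilon$, I would split this integral over the intervals $[t_k,t_{k+1}]$, $0\leq k\leq n-1$, and $[t_n,t]$; on $[t_k,t_{k+1}]$ the identity $U_B^-(t_0,s)=U_B^-(t_0,t_{k+1})U_B(t_{k+1},s)$ lets me factor out $U_B^-(t_0,t_{k+1})$ and recognise $\int_{t_k}^{t_{k+1}}U_B(t_{k+1},s)\lambda R_\lambda(A)f(s)\,ds=v_\lambda(t_{k+1},t_k)$, and similarly for the last piece, so that
\begin{equation*}
U_B^-(t_0,t)v_\lambda(t,t_0)=\sum_{k=0}^{n-1}U_B^-(t_0,t_{k+1})v_\lambda(t_{k+1},t_k)+U_B^-(t_0,t)v_\lambda(t,t_n).
\end{equation*}

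Next I would estimate the summands. By Proposition \ref{PROP5.4} and monotonicity of $\delta^*$ one has $\|v_\lambda(t_{k+1},t_k)\|\leq \delta^*(\tau_\varepsilon)\sup_{s\in[t_k,t_{k+1}]}\|f(s)\|$ (and, for the tail, $\|v_\lambda(t,t_n)\|\leq \delta^*(\tau_\varepsilon)\sup_{s\in[t_n,t]}\|f(s)\|$ since $t-t_n<\tau_\varepsilon$), while condition \textit{iv)} of Definition \ref{DEF1.8} gives $\|U_B^-(t_0,t_{k+1})\|_{\mathcal{L}(X_0)}\leq \kappa e^{-\beta(k+1)\tau_\varepsilon}$ and $\|U_B^-(t_0,t)\|_{\mathcal{L}(X_0)}\leq \kappa e^{-\beta(t-t_0)}$. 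Combining these with $\delta^*(\tau_\varepsilon)\leq \varepsilon$ (using $\kappa\geq 1$) and then inserting the weight $e^{\gamma(s-t_0)}$ — splitting into the cases $\gamma\geq 0$ and $\gamma<0$ exactly as in the proof of Proposition \ref{PROP5.7}, which is what produces the factor $\varepsilon_1:=\max(1,e^{-\gamma\tau_\varepsilon})$ — the $k$-th summand is bounded by $\kappa\varepsilon\varepsilon_1\,e^{-(\beta+\gamma)k\tau_\varepsilon}\sup_{s\in[t_0,t]}e^{\gamma(s-t_0)}\|f(s)\|$ and the tail term by $\kappa\varepsilon\varepsilon_1\sup_{s\in[t_0,t]}e^{\gamma(s-t_0)}\|f(s)\|$. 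Since $\gamma>-\beta$ the series $\sum_{k\geq 0}e^{-(\beta+\gamma)k\tau_\varepsilon}$ converges to $(1-e^{-(\beta+\gamma)\tau_\varepsilon})^{-1}$, so summing gives
\begin{equation*}
\|U_B^-(t_0,t)v_\lambda(t,t_0)\|\leq \kappa\,\varepsilon\,\varepsilon_1\left(1+\frac{1}{1-e^{-(\beta+\gamma)\tau_\varepsilon}}\right)\sup_{s\in[t_0,t]}e^{\gamma(s-t_0)}\|f(s)\|\leq \widehat{C}(\varepsilon,\gamma)\sup_{s\in[t_0,t]}e^{\gamma(s-t_0)}\|f(s)\| ,
\end{equation*}
because $1+(1-q)^{-1}\leq 2(1-q)^{-1}$ for $q\in(0,1)$; this is the asserted bound.

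The step I expect to be the main obstacle is the first one: one must check that every use of the backward composition rule $U_B^-(a,c)=U_B^-(a,b)U_B(b,c)$ is made on the range of arguments where it is valid (here always with $t_0\leq s\leq t_{k+1}\leq t$, so that $c\geq a$), which is what legitimises the rearrangement of the integral and the telescoping above. Once that rewriting is in place, the remainder is the same geometric-series bookkeeping as in Propositions \ref{PROP5.7} and \ref{PROP5.8}, with the weight $e^{\gamma(t-s)}$ replaced by $e^{\gamma(s-t_0)}$ and the forward decay replaced by the backward one.
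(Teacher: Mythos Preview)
Your argument is correct and follows essentially the same route as the paper's proof: split $[t_0,t]$ into blocks of length $\tau_\varepsilon$, use Proposition \ref{PROP5.4} on each block, use the backward dichotomy bound $\Vert U_B^-(t_0,\cdot)\Vert\leq \kappa e^{-\beta(\cdot-t_0)}$, insert the weight via the $\varepsilon_1=\max(1,e^{-\gamma\tau_\varepsilon})$ trick, and sum the geometric series. The only cosmetic differences are that the paper normalises $t=0$ and partitions from the right (nodes $-k\tau_\varepsilon$), first computing $\Pi^-(0)v_\lambda(0,t_0)$ and then applying the inverse $U_B^-(t_0,0)$, whereas you partition from the left (nodes $t_0+k\tau_\varepsilon$) and work directly with $U_B^-(t_0,t)v_\lambda(t,t_0)=\int_{t_0}^t U_B^-(t_0,s)\lambda R_\lambda(A)f(s)\,ds$; your route is slightly more streamlined.
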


\begin{proof}
Let $(t,t_0)\in \Delta$ be given. Without loss of generality one can assume
that $t=0$. From now one fix $t_0\leq 0$. Let $\tau_\varepsilon>0$ be given
such that 
\begin{equation*}
\kappa\delta^*(s)\leq \varepsilon, \ \forall s\in [0,\tau_\varepsilon].
\end{equation*}
Then since we can write $t_0=-n\tau_\varepsilon-\theta$ with $\theta\in
[0,\tau_\varepsilon)$ and $n\in \mathbb{N}$ we obtain 
\begin{equation*}
\begin{array}{ll}
U_B^-(0,t_0)v_\lambda(0,t_0) & =\displaystyle\int_{t_0}^0 U_B^-(0,s)\lambda
R_\lambda(A)f(s)ds \\ 
& =\displaystyle\underset{k=0}{\overset{n-1}{\sum}}\int_{-(k+1)\tau_%
\varepsilon}^{-k\tau_\varepsilon} U_B^-(0,s)\lambda R_\lambda(A)f(s)ds \\ 
& \hspace{2cm}\displaystyle+\int_{t_0}^{-n\tau_\varepsilon}
U_B^-(0,s)\lambda R_\lambda(A)f(s)ds \\ 
& =\displaystyle\underset{k=0}{\overset{n-1}{\sum}}U_B^-(0,-k\tau_%
\varepsilon)\int_{-(k+1)\tau_\varepsilon}^{-k\tau_\varepsilon}
U_B^-(-k\tau_\varepsilon,s)\lambda R_\lambda(A)f(s)ds \\ 
& \qquad \qquad \qquad\displaystyle+U_B^-(0,-n\tau_\varepsilon)%
\int_{t_0}^{-n\tau_\varepsilon} U_B^-(-n\tau_\varepsilon,s)\lambda
R_\lambda(A)f(s)ds, \\ 
& 
\end{array}%
\end{equation*}
so that 
\begin{equation}  \label{5.10}
\Pi^-(0)v_\lambda(0,t_0)=\underset{k=0}{\overset{n-1}{\sum}}%
U_B^-(0,-k\tau_\varepsilon)v_\lambda(-k\tau_\varepsilon,-(k+1)\tau_%
\varepsilon)+U_B^-(0,-n\tau_\varepsilon)v_\lambda(-n\tau_\varepsilon,t_0).
\end{equation}
Since $U_B^-(0,t_0)$ is invertible from $\Pi^-(t_0)(X_0)$ into $%
\Pi^-(0)(X_0) $ with inverse $U_B^-(t_0,0)$, by applying $U_B^-(t_0,0)$ to (%
\ref{5.10}) we obtain 
\begin{equation*}
U_B^-(t_0,0)v_\lambda(0,t_0)=\underset{k=0}{\overset{n-1}{\sum}}%
U_B^-(t_0,-k\tau_\varepsilon)v_\lambda(-k\tau_\varepsilon,-(k+1)\tau_%
\varepsilon)+U_B^-(t_0,-n\tau_\varepsilon)v_\lambda(-n\tau_\varepsilon,t_0).
\end{equation*}
and by using the evolution property of $U_B^-$ it follows that 
\begin{equation}  \label{5.11}
\begin{array}{ll}
U_B^-(t_0,0)v_\lambda(0,t_0) & =\displaystyle U_B^-(t_0,-n\tau_\varepsilon)%
\underset{k=0}{\overset{n-1}{\sum}}U_B^-(-n\tau_\varepsilon,-k\tau_%
\varepsilon)v_\lambda(-k\tau_\varepsilon,-(k+1)\tau_\varepsilon) \\ 
& \hspace*{4cm}+U_B^-(t_0,-n\tau_\varepsilon)v_\lambda(-n\tau_%
\varepsilon,t_0).%
\end{array}%
\end{equation}
Next observe that for all $(r_0,r_1)\in \Delta$ and $r\leq r_1$ with $0\leq
r_0-r_1\leq \tau_\varepsilon$ we have 
\begin{equation}  \label{5.12}
\begin{array}{ll}
\Vert U^-_B(r,r_0)v_\lambda (r_0,r_1)\Vert & \leq \kappa e^{-\beta(r_0-r)}
\Vert v_\lambda (r_0,r_1)\Vert \\ 
& \leq e^{-\beta(r_0-r)} \kappa\delta^*(r_0-r_1) \underset{s \in [r_1,r_0]}{%
\sup} \Vert f(s) \Vert \\ 
& \leq e^{-\beta(r_0-r)} \varepsilon \underset{s \in [r_1,r_0]}{\sup} \Vert
f(s) \Vert.%
\end{array}%
\end{equation}
Let $\gamma>-\beta$ be fixed. Set $\varepsilon_1:=\max(1, e^{-\gamma
\tau_\varepsilon})$. Let $k\in \mathbb{N}$ and $r\in
[-(k+1)\tau_\varepsilon,-k\tau_\varepsilon]$ be given and fixed. \newline
Then if $\gamma\geq 0$ we have 
\begin{equation*}
\varepsilon \underset{s \in [r,-k\tau_\varepsilon]}{\sup} \Vert f(s)
\Vert=\varepsilon \underset{s \in [r,-k\tau_\varepsilon]}{\sup} e^{-\gamma
s} e^{\gamma s} \Vert f(s) \Vert \leq \varepsilon_1 e^{-\gamma r} \underset{%
s \in [r,-k\tau_\varepsilon]}{\sup} e^{\gamma s} \Vert f(s) \Vert,
\end{equation*}
while if $\gamma <0$ 
\begin{equation*}
\begin{array}{lll}
\varepsilon \underset{s \in [r,-k\tau_\varepsilon]}{\sup} \Vert f(s) \Vert & 
=\varepsilon \underset{s \in [r,-k\tau_\varepsilon]}{\sup} e^{-\gamma s}
e^{\gamma s} \Vert f(s) \Vert &  \\ 
& \leq \varepsilon e^{\gamma k \tau_\varepsilon}\underset{s \in[%
r,-k\tau_\varepsilon]}{\sup} e^{\gamma s} \Vert f(s) \Vert &  \\ 
& \leq \varepsilon e^{-\gamma r} e^{\gamma (r+k \tau_\varepsilon)} \underset{%
s \in [r,-k\tau_\varepsilon]}{\sup} e^{\gamma s} \Vert f(s) \Vert &  \\ 
& \leq \varepsilon e^{-\gamma r} e^{-\gamma \tau_\varepsilon}\underset{s \in
[r,-k\tau_\varepsilon]}{\sup} e^{\gamma s} \Vert f(s) \Vert &  \\ 
& \leq \varepsilon_1 e^{-\gamma r} \underset{s \in [r,-k\tau_\varepsilon]}{%
\sup} e^{\gamma s} \Vert f(s) \Vert. &  \\ 
&  & 
\end{array}%
\end{equation*}
Therefore for each $k\in \mathbb{N}$, each $r\in
[-(k+1)\tau_\varepsilon,-k\tau_\varepsilon]$ and $\gamma>-\beta$ we obtain 
\begin{equation}  \label{5.13}
\varepsilon \underset{s \in [r,-k\tau_\varepsilon]}{\sup} \Vert f(s)
\Vert\leq \varepsilon_1 e^{-\gamma r} \underset{s \in [r,-k\tau_\varepsilon]}%
{\sup} e^{\gamma s} \Vert f(s) \Vert.
\end{equation}
By (\ref{5.12}) and (\ref{5.13}) we obtain for each $k\in \mathbb{N}$, each $%
r\leq -(k+1)\tau_\varepsilon$ and $\gamma>-\beta$ we obtain 
\begin{equation*}
\begin{array}{ll}
\Vert U^-_B(r,-k\tau_\varepsilon )v_\lambda
(-k\tau_\varepsilon,-(k+1)\tau_\varepsilon)\Vert \leq
e^{(\beta+\gamma)(r+(k+1)\tau_\varepsilon)} e^{-\gamma r} \varepsilon_1%
\underset{s \in [-(k+1)\tau_\varepsilon,-k\tau_\varepsilon]}{\sup} e^{\gamma
s}\Vert f(s) \Vert. & 
\end{array}%
\end{equation*}
Since $-n\tau_\varepsilon-t_0=\theta\in [0,\tau_\varepsilon)$ we obtain from
(\ref{5.11}) and (\ref{5.13}) 
\begin{equation}  \label{5.14}
\begin{array}{ll}
\Vert U_B^-(t_0,-n\tau_\varepsilon)v_\lambda(-n\tau_\varepsilon,t_0) \Vert & 
\leq e^{\beta(t_0+n\tau_\varepsilon)} \varepsilon \underset{s \in
[t_0,-n\tau_\varepsilon]}{\sup} \Vert f(s) \Vert \\ 
& \leq e^{\beta(t_0+n\tau_\varepsilon)} \varepsilon_1 e^{-\gamma t_0} 
\underset{s \in [t_0,-n\tau_\varepsilon]}{\sup}e^{\gamma s} \Vert f(s) \Vert,%
\end{array}%
\end{equation}
and by using (\ref{5.11}) and (\ref{5.14}) it follows that 
\begin{equation*}
\begin{array}{ll}
\Vert U_B^-(t_0,0)\Pi^-(0)v_\lambda(0,t_0)\Vert & \leq \displaystyle \kappa
e^{\beta (t_0+n\tau_\varepsilon)}\left[ \underset{k=0}{\overset{n-1}{\sum}}
e^{(\beta+\gamma)(-n+k+1)\tau_\varepsilon} e^{\gamma n\tau_\varepsilon}
\varepsilon_1\underset{s \in [-(k+1)\tau_\varepsilon,-k\tau_\varepsilon]}{%
\sup} e^{\gamma s}\Vert f(s) \Vert \right] \\ 
& \hspace*{4cm}+ e^{\beta(t_0+n\tau_\varepsilon)} \varepsilon_1 e^{-\gamma
t_0} \underset{s \in [t_0,-n\tau_\varepsilon]}{\sup}e^{\gamma s} \Vert f(s)
\Vert \\ 
& \leq \displaystyle \kappa e^{\beta (t_0+n\tau_\varepsilon)}e^{\gamma
n\tau_\varepsilon}\left[ \underset{k=0}{\overset{n-1}{\sum}}
e^{(\beta+\gamma)(-n+k+1)\tau_\varepsilon} \varepsilon_1\underset{s \in
[t_0,0]}{\sup} e^{\gamma s}\Vert f(s) \Vert \right] \\ 
& \hspace*{4cm}+ e^{\beta(t_0+n\tau_\varepsilon)} \varepsilon_1 e^{-\gamma
t_0} \underset{s \in [t_0,0]}{\sup}e^{\gamma s} \Vert f(s) \Vert \\ 
& \leq \displaystyle \kappa e^{(\beta+\gamma)
(t_0+n\tau_\varepsilon)}e^{-\gamma t_0}\left[ \underset{k=-n+1}{\overset{0}{%
\sum}} (e^{(\beta+\gamma)\tau_\varepsilon})^k \varepsilon_1\underset{s \in
[t_0,0]}{\sup} e^{\gamma s}\Vert f(s) \Vert \right] \\ 
& \hspace*{4cm}+ e^{\beta(t_0+n\tau_\varepsilon)} \varepsilon_1 e^{-\gamma
t_0} \underset{s \in [t_0,0]}{\sup}e^{\gamma s} \Vert f(s) \Vert.%
\end{array}%
\end{equation*}
Finally since $\gamma+\beta>0$ and $t_0+n\tau_\varepsilon<0$ we get 
\begin{equation*}
\begin{array}{ll}
\Vert U_B^-(t_0,0)\Pi^-(0)v_\lambda(0,t_0)\Vert & \leq \displaystyle \kappa %
\left[ 1+\underset{k=-n+1}{\overset{0}{\sum}} (e^{(\beta+\gamma)\tau_%
\varepsilon})^k \right] \varepsilon_1 e^{-\gamma t_0} \underset{s \in [t_0,0]%
}{\sup}e^{\gamma s} \Vert f(s) \Vert \\ 
& \leq \displaystyle \kappa \left[ 1+\underset{k=-\infty}{\overset{0}{\sum}}
(e^{(\beta+\gamma)\tau_\varepsilon})^k \right] \varepsilon_1 e^{-\gamma t_0} 
\underset{s \in [t_0,0]}{\sup}e^{\gamma s} \Vert f(s) \Vert \\ 
& \leq \displaystyle \kappa \left[ \dfrac{2}{1-e^{-(\beta+\gamma)}} \right]
\varepsilon_1 e^{-\gamma t_0} \underset{s \in [t_0,0]}{\sup}e^{\gamma s}
\Vert f(s) \Vert. \\ 
& 
\end{array}%
\end{equation*}
\end{proof}

\begin{lemma}
\label{LE5.10} Let Assumptions \ref{ASS1.1}, \ref{ASS1.2}, \ref{ASS1.3} and %
\ref{ASS5.6} be satisfied. Assume in addition that 
\begin{equation*}
 \sup_{t\in \mathbb{R}} b(t) <+\infty.
\end{equation*}
Let $\eta \in [0,\beta)$ be given. Then for each $\lambda > \omega+1$, each $%
f\in BC^\eta(\mathbb{R},X)$ and $t\in \mathbb{R}$ 
\begin{equation}  \label{5.15}
\mathcal{J}^+_\lambda(f)(t):=\underset{t_0\rightarrow -\infty}{\lim}%
\int_{t_0}^{t} U_B^+(t,s)\lambda R_\lambda(A) f(s)ds:=\int_{-\infty}^{t}
U_B^+(t,s)\lambda R_\lambda(A) f(s)ds,
\end{equation}
exists. Moreover the following properties hold

\begin{itemize}
\item[i)] For each $\eta\in [0,\beta)$ and each $\lambda >\omega+1 $, $%
\mathcal{J}^+_\lambda $ is a bounded linear operator from $BC^\eta(\mathbb{R}%
,X)$ into itself. More precisely for any $\nu\in (-\beta,0)$ 
\begin{equation*}
\Vert \mathcal{J}^+_\lambda(f)\Vert_{\eta} \leq\widehat{C}(1,\nu)\Vert f
\Vert_{\eta}, \ \forall f\in BC^\eta(\mathbb{R},X)\ \text{ with } \ \eta\in
[0,-\nu],
\end{equation*}
where $\widehat{C}(1,\nu)$ is the constant introduced in Proposition \ref%
{PROP5.8}.

\item[ii)] For each $\eta\in [0,\beta)$, each $\lambda >\omega+1$ and each $%
f\in BC^\eta(\mathbb{R},X)$ we have 
\begin{equation}  \label{5.16}
\mathcal{J}^+_\lambda(f)(t)=\displaystyle U_B^+(t,l)\mathcal{J}%
^+_\lambda(f)(l)+\displaystyle \int_{l}^t U_B^+(t,s)\lambda R_\lambda(A)
f(s)ds, \ \forall (t,l)\in \Delta.
\end{equation}
\end{itemize}
\end{lemma}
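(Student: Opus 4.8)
The plan is to first recast the integral appearing in (\ref{5.15}). Since $\Pi^+(t)$ intertwines $U_B$ in the sense $\Pi^+(t)U_B(t,s)=U_B(t,s)\Pi^+(s)=U_B^+(t,s)$ (condition \textit{ii)} of Definition \ref{DEF1.8}), one has $\Pi^+(t)v_\lambda(t,t_0)=\int_{t_0}^{t}U_B^+(t,s)\lambda R_\lambda(A)f(s)\,ds$, so the quantity whose limit must be controlled in (\ref{5.15}) is precisely $\Pi^+(t)v_\lambda(t,t_0)$, and the weighted bound of Proposition \ref{PROP5.8} applies to it for every weight exponent $\gamma>-\beta$. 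This freedom in $\gamma$ is the crux of the argument.

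To establish that the limit (\ref{5.15}) exists, I would show that for fixed $t$ the family $\big(\Pi^+(t)v_\lambda(t,t_0)\big)_{t_0\le t}$ is Cauchy in $X_0$ as $t_0\to-\infty$; completeness of $X_0$ then finishes this part. For $t_1\le t_0\le t$ the difference of two such terms equals $\int_{t_1}^{t_0}U_B^+(t,s)\lambda R_\lambda(A)f(s)\,ds$, which by the semiflow identity $U_B^+(t,s)=U_B^+(t,t_0)U_B^+(t_0,s)$ (Remark \ref{RE5.2}) together with the commutation above factors as $U_B^+(t,t_0)\,\Pi^+(t_0)v_\lambda(t_0,t_1)$. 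Now I would fix $\varepsilon>0$ with a matching $\tau_\varepsilon$ as in Proposition \ref{PROP5.8} and pick a weight exponent $\gamma\in(-\beta,0)$ with $\gamma\le-\eta$, which is possible exactly because $\eta<\beta$. Applying Proposition \ref{PROP5.8} to the pair $(t_0,t_1)$, using $\|f(s)\|\le\|f\|_\eta e^{\eta|s|}$ (so $\|f(s)\|\le\|f\|_\eta e^{-\eta s}$ for $s<0$) and $\|U_B^+(t,t_0)\|_{\mathcal{L}(X_0)}\le\kappa e^{-\beta(t-t_0)}$, I would obtain, for $t_0$ sufficiently negative,
\[
\Big\|\int_{t_1}^{t_0}U_B^+(t,s)\lambda R_\lambda(A)f(s)\,ds\Big\|\le \kappa\,\widehat{C}(\varepsilon,\gamma)\,\|f\|_\eta\,e^{-\beta t}\,e^{(\beta-\eta)t_0},
\]
the key computation being that, since $\gamma+\eta\le0$, $\sup_{s\in[t_1,t_0]}e^{\gamma(t_0-s)}\|f(s)\|\le\|f\|_\eta e^{-\eta t_0}$. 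Because $\beta-\eta>0$ the right-hand side tends to $0$ as $t_0\to-\infty$, which yields the Cauchy property; moreover this bound is uniform for $t$ in any compact set, so the convergence in (\ref{5.15}) is uniform on compacta. Since each $t\mapsto\Pi^+(t)v_\lambda(t,t_0)$ is continuous (strong continuity of $\{\Pi^+(t)\}$ and continuity of $v_\lambda(\cdot,t_0)$), $\mathcal{J}^+_\lambda(f)$ is then continuous, hence lies in $C(\mathbb{R},X_0)$.

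For \textit{i)}, linearity of $\mathcal{J}^+_\lambda$ is immediate. For the norm estimate I would rerun the above with $\varepsilon=1$ and $\gamma=\nu\in(-\beta,0)$: Proposition \ref{PROP5.8} gives $\|\Pi^+(t)v_\lambda(t,t_0)\|\le\widehat{C}(1,\nu)\sup_{s\in[t_0,t]}e^{\nu(t-s)}\|f(s)\|$, and for $\eta\in[0,-\nu]$ the elementary inequality $e^{\eta|s|}\le e^{\eta|t|}e^{\eta(t-s)}$ valid for $s\le t$, combined with $\nu+\eta\le0$, yields $\sup_{s\in[t_0,t]}e^{\nu(t-s)}\|f(s)\|\le\|f\|_\eta e^{\eta|t|}$, uniformly in $t_0$. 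Passing to the limit $t_0\to-\infty$ and then taking the supremum over $t\in\mathbb{R}$ gives $\|\mathcal{J}^+_\lambda(f)\|_\eta\le\widehat{C}(1,\nu)\|f\|_\eta$, so $\mathcal{J}^+_\lambda$ maps $BC^\eta(\mathbb{R},X)$ into $BC^\eta(\mathbb{R},X_0)$ with the stated bound. For \textit{ii)}, I would fix $(t,l)\in\Delta$ and $t_0\le l$, split $\int_{t_0}^{t}=\int_{t_0}^{l}+\int_{l}^{t}$, and use $U_B^+(t,s)=U_B^+(t,l)U_B^+(l,s)$ on $[t_0,l]$ to get
\[
\Pi^+(t)v_\lambda(t,t_0)=U_B^+(t,l)\,\Pi^+(l)v_\lambda(l,t_0)+\int_{l}^{t}U_B^+(t,s)\lambda R_\lambda(A)f(s)\,ds ,
\]
and then let $t_0\to-\infty$, using boundedness of $U_B^+(t,l)$ together with the convergence already established at times $l$ and $t$, to obtain (\ref{5.16}).

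The step I expect to be the real obstacle is the Cauchy estimate of the second paragraph: it requires a weight exponent $\gamma$ that is simultaneously negative enough ($\gamma\le-\eta$, so that $\sup_{s\le t_0}e^{\gamma(t_0-s)}\|f(s)\|$ does not blow up as $t_1\to-\infty$) and strictly above $-\beta$ (so that Proposition \ref{PROP5.8} is applicable). The existence of such a $\gamma$ is exactly the content of the hypothesis $\eta<\beta$, and this is the only point of the proof where the dichotomy exponent is used. A naive estimate with $\gamma=0$ would fail to absorb the exponential growth of $f$, which is why the weighted form of Proposition \ref{PROP5.8}, rather than a plain uniform bound, is indispensable here.
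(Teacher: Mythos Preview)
Your proof is correct and follows essentially the same route as the paper: you identify the integral as $\Pi^+(t)v_\lambda(t,t_0)$, prove the Cauchy property by factoring the tail as $U_B^+(t,t_0)\Pi^+(t_0)v_\lambda(t_0,t_1)$ and applying Proposition~\ref{PROP5.8} with a weight $\gamma\in(-\beta,-\eta]$, and handle \textit{i)} and \textit{ii)} by the same weighted estimate and integral splitting as the paper. Your added remark that the convergence is uniform on compacta (hence $\mathcal{J}^+_\lambda(f)$ is continuous) is a useful detail the paper leaves implicit.
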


\begin{proof}
Let $\eta\in [0,\beta) $ be given. Let $\lambda>\omega+1$ be given and
fixed. Recall 
\begin{equation*}
v_\lambda(t,t_0):=\int_{t_0}^{t} U_B(t,s)\lambda R_\lambda(A) f(s)ds, \
\forall (t,t_0)\in \Delta,
\end{equation*}
and observe that 
\begin{equation*}
\Pi^+(t)v_\lambda(t,t_0)=\int_{t_0}^{t} U_B^+(t,s)\lambda R_\lambda(A)
f(s)ds, \ \forall (t,t_0)\in \Delta,
\end{equation*}
and 
\begin{equation*}
\mathcal{J}^+_\lambda(f)(t)=\underset{t_0\rightarrow -\infty}{\lim}%
\Pi^+(t)v_\lambda(t,t_0), \ \forall t\in \mathbb{R}.
\end{equation*}
To prove the existence of the limit, we will show that for each fixed $t\in 
\mathbb{R}$, $\lbrace\Pi^+(t) v_\lambda(t,t_0)\rbrace_{t_0\leq t}$ is a
Cauchy sequence. Fix $t \in \mathbb{R}$. Let $f\in BC^\eta(\mathbb{R},X)$ be
given. Let $t_0,r\in \mathbb{R}$ such that $t_0\leq r\leq t$. Then we have 
\begin{equation*}
\begin{array}{ll}
\Pi^+(t)v_\lambda(t,t_0) & =\displaystyle U_B^+(t,r)\int_{t_0}^{r}
U_B^+(r,s)\lambda R_\lambda(A) f(s)ds+\int_{r}^{t} U_B^+(t,s)\lambda
R_\lambda(A) f(s)ds \\ 
& =U_B^+(t,r)v_\lambda(r,t_0)+\Pi^+(t)v_\lambda(t,r)%
\end{array}%
\end{equation*}
and 
\begin{equation}  \label{5.17}
\Pi^+(t)
v_\lambda(t,t_0)-\Pi^+(t)v_\lambda(t,r)=U_B(t,r)\Pi^+(r)v_\lambda(r,t_0).
\end{equation}
Hence by Proposition \ref{PROP5.8} we can find a constant $\widehat{C}%
(1,\gamma)>0$ with $\gamma \in (-\beta,-\eta)$ such that 
\begin{equation*}
\begin{array}{ll}
\Vert \Pi^+(t) v_\lambda(t,t_0)-\Pi^+(t)v_\lambda(t,r)\Vert & \leq \kappa
e^{-\beta (t-r)}\widehat{C}(1,\gamma) \underset{s \in [t_0,r]}{\sup}
e^{\gamma (r-s)} \Vert f(s) \Vert \\ 
& \leq \kappa e^{-\beta (t-r)}\widehat{C}(1,\gamma) \Vert f \Vert_\eta 
\underset{s \in [t_0,r]}{\sup} e^{\gamma (r-s)} e^{\eta \vert s\vert} \\ 
& \leq \kappa e^{-\beta (t-r)}\widehat{C}(1,\gamma) \Vert f\Vert_\eta
e^{-\gamma (t-r)}\underset{s \in [t_0,r]}{\sup} e^{\gamma (t-s)} e^{\eta
\vert s\vert} \\ 
& \leq \kappa e^{-(\beta+\gamma) (t-r)}\widehat{C}(1,\gamma) \Vert f
\Vert_\eta \underset{s \in [t_0,r]}{\sup} e^{\gamma (t-s)} e^{\eta (\vert
t-s\vert+ \vert t\vert)}. \\ 
& 
\end{array}%
\end{equation*}
Then using the fact that $\beta+\gamma>0$ and $\eta+\gamma<0$ we obtain 
\begin{equation*}
\begin{array}{ll}
\Vert \Pi^+(t) v_\lambda(t,t_0)-\Pi^+(t)v_\lambda(t,r)\Vert \leq \kappa
e^{-(\beta+\gamma) (t-r)}\widehat{C}(1,\gamma) \Vert f \Vert_\eta e^{\eta
\vert t\vert }, &  \\ 
& 
\end{array}%
\end{equation*}
that is 
\begin{equation*}
\underset{t_0,r\rightarrow -\infty}{\lim}\Vert \Pi^+(t)
v_\lambda(t,t_0)-\Pi^+(t)v_\lambda(t,r)\Vert=0.
\end{equation*}
This prove the existence of the limit (\ref{5.15}) for any fixed $t\in 
\mathbb{R}$.\newline
\textbf{Proof of \textit{i)}}: Let $\eta\in [0,\beta)$ be given. Let $\nu
\in (-\beta,0)$. By Proposition \ref{PROP5.8} we can find a constant $%
\widehat{C}(1,\nu)>0$ such that 
\begin{equation*}
\Vert \Pi^+(t)v_\lambda(t,t_0) \Vert \leq \widehat{C}(1,\nu) \underset{s \in
[t_0,t]}{\sup} e^{\nu (t-s)} \Vert f(s) \Vert, \ \forall (t,t_0)\in \Delta.
\end{equation*}
Then for all $(t,t_0)\in \Delta $ 
\begin{equation*}
\begin{array}{ll}
\Vert \Pi^+(t) v_\lambda(t,t_0)\Vert & \leq \widehat{C}(1,\nu) \underset{s
\in [t_0,t]}{\sup} e^{\nu(t-s)} \Vert f(s) \Vert \\ 
& \leq \widehat{C}(1,\nu) \Vert f\Vert_\eta \underset{s \in [t_0,t]}{\sup}
e^{\nu(t-s)} e^{\eta \vert s\vert} \\ 
& \leq \widehat{C}(1,\nu) \Vert f\Vert_\eta \underset{s \in [t_0,t]}{\sup}
e^{\nu(t-s)} e^{\eta \vert t- s\vert +\eta \vert t\vert} \\ 
& \leq \widehat{C}(1,\nu) \Vert f\Vert_\eta e^{\eta \vert t\vert}\underset{s
\in [t_0,t]}{\sup} e^{(\nu+\eta)(t-s)}, \\ 
& 
\end{array}%
\end{equation*}
and since $\nu+\eta<0$ we obtain 
\begin{equation}  \label{5.18}
\begin{array}{ll}
\Vert \Pi^+(t) v_\lambda(t,t_0)\Vert & \leq \widehat{C}(1,\nu) \Vert
f\Vert_\eta e^{\eta \vert t\vert}. \\ 
& 
\end{array}%
\end{equation}
The result follows by letting $t_0\rightarrow -\infty$ in (\ref{5.18}). 
\newline
\textbf{Proof of \textit{ii)}}: Let $\eta\in [0,\beta)$ and $(t,l)\in \Delta$
be given. Then 
\begin{equation*}
\begin{array}{ll}
\mathcal{J}^+_\lambda(f)(t) & =\displaystyle U_B^+(t,l)\int_{-\infty}^{l}
U_B^+(l,s)\lambda R_\lambda(A) f(s)ds+\displaystyle \int_{l}^t
U_B^+(t,s)\lambda R_\lambda(A) f(s)ds \\ 
& =\displaystyle U_B^+(t,l)\mathcal{J}^+_\lambda(f)(l)+\displaystyle %
\int_{l}^t U_B^+(t,s)\lambda R_\lambda(A) f(s)ds. \\ 
& 
\end{array}%
\end{equation*}
\end{proof}

\begin{lemma}
\label{LE5.11} Let Assumptions \ref{ASS1.1}, \ref{ASS1.2}, \ref{ASS1.3} and %
\ref{ASS5.6} be satisfied. Assume in addition that 
\begin{equation*}
\sup_{t\in \mathbb{R}} b(t) <+\infty.
\end{equation*}
Let $\eta \in [0,\beta)$ be given. Then for each $\lambda > \omega+1$, each $%
f\in BC^\eta(\mathbb{R},X)$ and $t_0\in \mathbb{R}$ 
\begin{equation}  \label{5.19}
\mathcal{J}^-_\lambda(f)(t_0):=-\underset{t\rightarrow +\infty}{\lim}%
\int_{t_0}^{t} U_B^-(t_0,s)\lambda R_\lambda(A)
f(s)ds:=-\int_{t_0}^{+\infty} U_B^-(t_0,s)\lambda R_\lambda(A) f(s)ds,
\end{equation}
exists. Moreover the following properties hold

\begin{itemize}
\item[i)] For each $\eta\in [0,\beta)$ and each $\lambda >\omega+1 $, $%
\mathcal{J}^-_\lambda $ is a bounded linear operator from $BC^\eta(\mathbb{R}%
,X)$ into itself. More precisely for any $\nu\in (-\beta,0)$ 
\begin{equation*}
\Vert \mathcal{J}^-_\lambda(f)\Vert_{\eta} \leq\widehat{C}(1,\nu)\Vert f
\Vert_{\eta}, \ \forall f\in BC^\eta(\mathbb{R},X)\ \text{ with } \ \eta\in
[0,-\nu],
\end{equation*}
where $\widehat{C}(1,\nu)$ is the constant introduced in Proposition \ref%
{PROP5.8}.

\item[ii)] For each $\eta\in [0,\beta)$, each $\lambda>\omega+1$ and each $%
f\in BC^\eta(\mathbb{R},X)$ we have 
\begin{equation}  \label{5.20}
\mathcal{J}^-_\lambda(f)(t)=\displaystyle U_B^-(t,l)\mathcal{J}%
^-_\lambda(f)(l)+\displaystyle \int_{l}^t U_B^-(t,s)\lambda R_\lambda(A)
f(s)ds, \ \forall (t,l)\in \Delta.
\end{equation}
\end{itemize}
\end{lemma}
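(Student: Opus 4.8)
The plan is to follow the proof of Lemma~\ref{LE5.10} almost line for line, with the forward evolution $U_B^+$ replaced by the backward evolution $U_B^-$, the limit $t_0\to-\infty$ replaced by $t\to+\infty$, and Proposition~\ref{PROP5.9} used in place of Proposition~\ref{PROP5.8}. Fix $t_0\in\mathbb{R}$, $\lambda>\omega+1$ and $f\in BC^\eta(\mathbb{R},X)$ with $\eta\in[0,\beta)$, and for $t\geq t_0$ set $g_\lambda(t_0,t):=\int_{t_0}^t U_B^-(t_0,s)\lambda R_\lambda(A)f(s)\,ds$. Using the composition rule $U_B^-(t_0,t)U_B(t,s)=U_B^-(t_0,s)$, valid for $t_0\leq s\leq t$ (Remark~\ref{RE5.2}), one gets $g_\lambda(t_0,t)=U_B^-(t_0,t)v_\lambda(t,t_0)$ and likewise $g_\lambda(r,t)=U_B^-(r,t)v_\lambda(t,r)$ for $r\in[t_0,t]$; combining this with $U_B^-(t_0,r)U_B^-(r,s)=U_B^-(t_0,s)$ gives, for $t_0\leq r\leq t$,
\[
g_\lambda(t_0,t)-g_\lambda(t_0,r)=\int_r^t U_B^-(t_0,s)\lambda R_\lambda(A)f(s)\,ds=U_B^-(t_0,r)\,U_B^-(r,t)\,v_\lambda(t,r).
\]
To prove the limit~(\ref{5.19}) exists, I would fix $\gamma\in(-\beta,-\eta)$ and apply Proposition~\ref{PROP5.9} with $\varepsilon=1$ to bound $\|U_B^-(r,t)v_\lambda(t,r)\|\leq\widehat C(1,\gamma)\sup_{s\in[r,t]}e^{\gamma(s-r)}\|f(s)\|$; since $\|f(s)\|\leq\|f\|_\eta e^{\eta|s|}\leq\|f\|_\eta e^{\eta(s-r)}e^{\eta|r|}$ for $s\geq r$ and $\gamma+\eta<0$, this supremum is $\leq\widehat C(1,\gamma)\|f\|_\eta e^{\eta|r|}$, and multiplying by $\|U_B^-(t_0,r)\|\leq\kappa e^{-\beta(r-t_0)}$ yields $\|g_\lambda(t_0,t)-g_\lambda(t_0,r)\|\leq\kappa\widehat C(1,\gamma)\|f\|_\eta e^{\beta t_0}e^{-(\beta-\eta)r}$ for $0\leq r\leq t$, which tends to $0$ as $r\to+\infty$ uniformly in $t\geq r$. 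Hence $\{g_\lambda(t_0,t)\}_{t\geq t_0}$ is Cauchy in $X_0$ and the limit exists.

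For part~i), I would apply Proposition~\ref{PROP5.9} once more, this time with $\varepsilon=1$ and $\gamma=\nu\in(-\beta,0)$, to get $\|g_\lambda(t_0,t)\|\leq\widehat C(1,\nu)\sup_{s\in[t_0,t]}e^{\nu(s-t_0)}\|f(s)\|$; for $\eta\in[0,-\nu]$, bounding $\|f(s)\|\leq\|f\|_\eta e^{\eta(s-t_0)}e^{\eta|t_0|}$ and using $\nu+\eta\leq0$ gives $\|g_\lambda(t_0,t)\|\leq\widehat C(1,\nu)\|f\|_\eta e^{\eta|t_0|}$. Letting $t\to+\infty$ and taking the supremum over $t_0$ gives $\|\mathcal{J}^-_\lambda(f)\|_\eta\leq\widehat C(1,\nu)\|f\|_\eta$. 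That $\mathcal{J}^-_\lambda(f)$ is continuous (hence in $BC^\eta(\mathbb{R},X_0)$) follows because $t_0\mapsto g_\lambda(t_0,t)=U_B^-(t_0,t)v_\lambda(t,t_0)$ is continuous --- by the joint continuity of $(t_0,s)\mapsto U_B^-(t_0,s)x$ on $\mathbb{R}^2$ and of $(t,t_0)\mapsto v_\lambda(t,t_0)$ --- and because the Cauchy estimate above is locally uniform in $t_0$, so the convergence $g_\lambda(t_0,t)\to\mathcal{J}^-_\lambda(f)(t_0)$ is locally uniform in $t_0$.

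Part~ii) is then purely algebraic: for $(t,l)\in\Delta$, split $-\int_t^{+\infty}U_B^-(t,s)\lambda R_\lambda(A)f(s)\,ds$ as $-\int_l^{+\infty}U_B^-(t,s)\lambda R_\lambda(A)f(s)\,ds+\int_l^t U_B^-(t,s)\lambda R_\lambda(A)f(s)\,ds$, use $U_B^-(t,s)=U_B^-(t,l)U_B^-(l,s)$ (Remark~\ref{RE5.2}) and pull the bounded operator $U_B^-(t,l)$ out of the convergent improper integral $\int_l^{+\infty}U_B^-(l,s)\lambda R_\lambda(A)f(s)\,ds$ to obtain $\mathcal{J}^-_\lambda(f)(t)=U_B^-(t,l)\mathcal{J}^-_\lambda(f)(l)+\int_l^t U_B^-(t,s)\lambda R_\lambda(A)f(s)\,ds$, which is~(\ref{5.20}). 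The computations are routine given Proposition~\ref{PROP5.9} and the composition identities of Remark~\ref{RE5.2}; the only points requiring real care are keeping the sign conditions $\beta+\gamma>0$, $\gamma+\eta<0$ (resp.\ $\nu+\eta\leq0$) straight when trading the weight $e^{\eta|s|}$ against the dichotomy decay, and applying the two rules $U_B^-(a,b)U_B^-(b,c)=U_B^-(a,c)$ and $U_B^-(a,b)U_B(b,c)=U_B^-(a,c)$ only for the admissible ranges of their arguments. I expect no genuine obstacle beyond this bookkeeping.
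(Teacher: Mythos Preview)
Your proposal is correct and follows essentially the same approach as the paper: introduce $g_\lambda(t_0,t)=U_B^-(t_0,t)v_\lambda(t,t_0)$ (the paper calls this $z_\lambda$), use the flow identities of Remark~\ref{RE5.2} to write the increment as $U_B^-(t_0,r)U_B^-(r,t)v_\lambda(t,r)$, and control it via Proposition~\ref{PROP5.9} together with the sign conditions $\beta+\gamma>0$ and $\gamma+\eta<0$; parts~i) and~ii) are handled identically. Your added remark on the continuity of $t_0\mapsto\mathcal{J}^-_\lambda(f)(t_0)$ via locally uniform convergence is a point the paper leaves implicit.
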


\begin{proof}
Let $\eta \in \lbrack 0,\beta )$ be given. Let $\lambda >\omega +1$ be given
and fixed. Recall 
\begin{equation*}
v_{\lambda }(t,t_{0}):=\int_{t_{0}}^{t}U_{B}(t,s)\lambda R_{\lambda
}(A)f(s)ds,\ \forall (t,t_{0})\in \Delta .
\end{equation*}%
Observe that 
\begin{equation*}
U_{B}^{-}(t_{0},t)v_{\lambda
}(t,t_{0})=\int_{t_{0}}^{t}U_{B}^{-}(t_{0},s)\lambda R_{\lambda }(A)f(s)ds,\
\forall (t,t_{0})\in \Delta ,
\end{equation*}%
and 
\begin{equation*}
\mathcal{J}_{\lambda }^{-}(f)(t_{0})=-\underset{t\rightarrow +\infty }{\lim }%
z_{\lambda }(t,t_{0}),\ \forall t_{0}\in \mathbb{R},
\end{equation*}%
with 
\begin{equation}
z_{\lambda }(t,t_{0}):=U_{B}^{-}(t_{0},t)v_{\lambda }(t,t_{0}),\ \forall
(t,t_{0})\in \Delta .  \label{5.21}
\end{equation}%
To prove the existence of the limit, we will show that for each $t_{0}\in 
\mathbb{R}$, $\{w_{\lambda }(t,t_{0})\}_{t\geq t_{0}}$ is a Cauchy sequence.
Let $f\in BC^{\eta }(\mathbb{R},X)$ be given. Let $t,r\in \mathbb{R}$ such
that $t_{0}\leq r\leq t$. Then we have 
\begin{equation*}
\begin{array}{ll}
z_{\lambda }(t,t_{0}) & =\displaystyle\int_{t_{0}}^{r}U_{B}^{-}(t_{0},s)%
\lambda R_{\lambda
}(A)f(s)ds+U_{B}^{-}(t_{0},r)\int_{r}^{t}U_{B}^{-}(r,s)\lambda R_{\lambda
}(A)f(s)ds \\ 
& =z_{\lambda }(r,t_{0})+U_{B}^{-}(t_{0},r)z_{\lambda }(r,t),%
\end{array}%
\end{equation*}%
and 
\begin{equation}
z_{\lambda }(t,t_{0})-z_{\lambda }(r,t_{0})=U_{B}^{-}(t_{0},r)z_{\lambda
}(r,t).  \label{5.22}
\end{equation}%
Then by Proposition \ref{PROP5.9} and the definition of $z_{\lambda }$ in (%
\ref{5.21}) we can find a constant $\widehat{C}(1,\gamma )>0$ with $\gamma
\in (-\beta ,-\eta )$ such that 
\begin{equation*}
\begin{array}{ll}
\Vert z_{\lambda }(t,t_{0})-z_{\lambda }(r,t_{0})\Vert  & \leq \kappa
e^{-\beta (r-t_{0})}\widehat{C}(1,\gamma )\underset{s\in \lbrack t,r]}{\sup }%
e^{\gamma (s-t)}\Vert f(s)\Vert  \\ 
& \leq \kappa e^{-\beta (r-t_{0})}\widehat{C}(1,\gamma )\Vert f\Vert _{\eta
}e^{-\gamma (t-t_{0})}\underset{s\in \lbrack t,r]}{\sup }e^{\gamma
(s-t_{0})}e^{\eta |s|} \\ 
& \leq \kappa e^{-\beta (r-t_{0})}\widehat{C}(1,\gamma )\Vert f\Vert _{\eta
}e^{-\gamma (r-t_{0})}e^{-\gamma (t-r)}\underset{s\in \lbrack t,r]}{\sup }%
e^{\gamma (s-t_{0})}e^{\eta |s|} \\ 
& \leq \kappa e^{-(\beta +\gamma )(r-t_{0})}\widehat{C}(1,\gamma )\Vert
f\Vert _{\eta }e^{-\gamma (t-r)}\underset{s\in \lbrack t,r]}{\sup }e^{\gamma
(s-t_{0})}e^{\eta |s|} \\ 
& \leq \kappa e^{-(\beta +\gamma )(r-t_{0})}\widehat{C}(1,\gamma )\Vert
f\Vert _{\eta }\underset{s\in \lbrack t,r]}{\sup }e^{\gamma
(s-t_{0})}e^{\eta |s|} \\ 
& \leq \kappa e^{-(\beta +\gamma )(r-t_{0})}\widehat{C}(1,\gamma )\Vert
f\Vert _{\eta }\underset{s\in \lbrack t,r]}{\sup }e^{\gamma
(s-t_{0})}e^{\eta (|s-t_{0}|+|t_{0}|)} \\ 
& \leq \kappa e^{-(\beta +\gamma )(r-t_{0})}\widehat{C}(1,\gamma )\Vert
f\Vert _{\eta }\underset{s\in \lbrack t,r]}{\sup }e^{(\gamma +\eta
)(s-t_{0})}e^{\eta |t_{0}|}, \\ 
& 
\end{array}%
\end{equation*}%
and since $\beta +\gamma >0$, $\gamma +\eta <0$ we obtain 
\begin{equation*}
\Vert z_{\lambda }(t,t_{0})-z_{\lambda }(r,t_{0})\Vert \leq \kappa
e^{-(\beta +\gamma )(r-t_{0})}\widehat{C}(1,\gamma )\Vert f\Vert _{\eta
}e^{\eta |t_{0}|},
\end{equation*}%
which gives 
\begin{equation*}
\underset{t,r\rightarrow +\infty }{\lim }\Vert z_{\lambda
}(t,t_{0})-z_{\lambda }(r,t_{0})\Vert =0,
\end{equation*}%
and proves the existence of the limit (\ref{5.19}).\newline
\textbf{Proof of \textit{i)}}: Let $\eta \in \lbrack 0,\beta )$ be given.
Let $\nu \in (-\beta ,0)$. Note that by Proposition \ref{PROP5.9} we can
find a constant $\widehat{C}(1,\nu )>0$ such that 
\begin{equation*}
\Vert w_{\lambda }(t,t_{0})\Vert \leq \widehat{C}(1,\nu )\underset{s\in
\lbrack t_{0},t]}{\sup }e^{\gamma (s-t_{0})}\Vert f(s)\Vert ,\ \forall
(t,t_{0})\in \Delta .
\end{equation*}%
Then for all $(t,t_{0})\in \Delta $ 
\begin{equation*}
\begin{array}{ll}
\Vert w_{\lambda }(t,t_{0})\Vert  & \leq \widehat{C}(1,\nu )\Vert f\Vert
_{\eta }\underset{s\in \lbrack t_{0},t]}{\sup }e^{\nu (s-t_{0})}e^{\eta |s|}
\\ 
& \leq \widehat{C}(1,\nu )\Vert f\Vert _{\eta }\underset{s\in \lbrack
t_{0},t]}{\sup }e^{\nu (s-t_{0})}e^{\eta (|s-t_{0}|+|t_{0}|)} \\ 
& \leq \widehat{C}(1,\nu )\Vert f\Vert _{\eta }\underset{s\in \lbrack
t_{0},t]}{\sup }e^{(\nu +\eta )(s-t_{0})}e^{\eta |t_{0}|}, \\ 
& 
\end{array}%
\end{equation*}%
and since $\nu +\eta <0$ we obtain 
\begin{equation}
\Vert w_{\lambda }(t,t_{0})\Vert \leq \widehat{C}(1,\nu )\Vert f\Vert _{\eta
}e^{\eta |t_{0}|}.  \label{5.23}
\end{equation}%
The result follows by letting $t\rightarrow +\infty $ in (\ref{5.23}).%
\newline
\textbf{Proof of \textit{ii)}}: Let $\eta \in \lbrack 0,\beta )$ and $%
(t,l)\in \Delta $ be given. Then 
\begin{equation*}
\begin{array}{ll}
\mathcal{J}_{\lambda }^{-}(f)(l) & =-\displaystyle\int_{l}^{t}U_{B}^{-}(l,s)%
\lambda R_{\lambda }(A)f(s)ds-\int_{t}^{+\infty }U_{B}^{-}(l,s)\lambda
R_{\lambda }(A)f(s)ds \\ 
& =-U_{B}^{-}(l,t)\displaystyle\int_{l}^{t}U_{B}^{-}(t,s)\lambda R_{\lambda
}(A)f(s)ds-U_{B}^{-}(l,t)\int_{t}^{+\infty }U_{B}^{-}(t,s)\lambda R_{\lambda
}(A)f(s)ds \\ 
& 
\end{array}%
\end{equation*}%
because $U_{B}^{-}(l,t)$ is invertible from $\Pi ^{-}(t)(X_{0})$ into $\Pi
^{-}(l)(X_{0})$ with inverse $U_{B}^{-}(t,l)$ and $\mathcal{J}_{\lambda
}^{-}(f)(l)\in \Pi ^{-}(l)(X_{0})$ one gets 
\begin{equation*}
\begin{array}{ll}
U_{B}^{-}(t,l)\mathcal{J}_{\lambda }^{-}(f)(l) & =-\displaystyle%
\int_{l}^{t}U_{B}^{-}(t,s)\lambda R_{\lambda }(A)f(s)ds-\int_{t}^{+\infty
}U_{B}^{-}(t,s)\lambda R_{\lambda }(A)f(s)ds \\ 
& =-\displaystyle\int_{l}^{t}U_{B}^{-}(t,s)\lambda R_{\lambda }(A)f(s)ds+%
\mathcal{J}_{\lambda }^{-}(f)(t),%
\end{array}%
\end{equation*}%
and the result follows.
\end{proof}

\begin{lemma}
\label{LE5.12} Let Assumptions \ref{ASS1.1}, \ref{ASS1.2}, \ref{ASS1.3} and %
\ref{ASS5.6} be satisfied. Assume in addition that 
\begin{equation*}
\sup_{t\in \mathbb{R}} b(t) <+\infty.
\end{equation*}
Let $\eta \in [0,\beta)$ be given. For each $\lambda > \omega+1$ and each $%
f\in BC^\eta(\mathbb{R},X)$ define 
\begin{equation}  \label{5.24}
\mathcal{J}_\lambda(f)(t):=\mathcal{J}^+_\lambda(f)(t)+\mathcal{J}%
^-_\lambda(f)(t):= \int_{-\infty}^{+\infty} \Gamma_B(t,s)\lambda
R_\lambda(A) f(s)ds,\ \forall t\in \mathbb{R},
\end{equation}

Then the following properties hold

\begin{itemize}
\item[i)] For each $\eta \in \lbrack 0,\beta )$ and each $\lambda >\omega +1$%
, $\mathcal{J}_{\lambda }$ is a bounded linear operator from $BC^{\eta }(%
\mathbb{R},X)$ into itself. More precisely for any $\nu \in (-\beta ,0)$ 
\begin{equation}
\Vert \mathcal{J}_{\lambda }(f)\Vert _{\eta }\leq 2\ \widehat{C}(1,\nu
)\Vert f\Vert _{\eta },\ \forall f\in BC^{\eta }(\mathbb{R},X)\ \text{ with }%
\ \eta \in \lbrack 0,-\nu ],  \label{5.24bis}
\end{equation}%
where $\widehat{C}(1,\nu )$ is the constant introduced in Proposition \ref%
{PROP5.8}.

\item[ii)] For each $\eta\in [0,\beta)$, each $\lambda>\omega+1$ and each $%
f\in BC^\eta(\mathbb{R},X)$ we have 
\begin{equation}  \label{5.25}
\mathcal{J}_\lambda(f)(t)=\displaystyle U_B(t,l)\mathcal{J}_\lambda(f)(l)+%
\displaystyle \int_{l}^t U_B(t,s)\lambda R_\lambda(A) f(s)ds, \ \forall
(t,l)\in \Delta.
\end{equation}

\item[iii)] For each $\eta\in [0,\beta)$, each $f\in BC^\eta(\mathbb{R},X)$, 
$\mathcal{J}_\lambda(f)$ is uniformly convergent on compact subset of $%
\mathbb{R}$ as $\lambda\rightarrow +\infty$.

\item[iv)] For each $f\in BUC(\mathbb{R},X)\subset BC^0(\mathbb{R},X)$ with
relatively compact range, $\mathcal{J}_\lambda(f)$ is uniformly convergent
on $\mathbb{R}$ as $\lambda\rightarrow +\infty$.
\end{itemize}
\end{lemma}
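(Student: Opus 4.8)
The four assertions split into two groups. Assertions \textit{i)} and \textit{ii)} will follow directly from Lemmas \ref{LE5.10} and \ref{LE5.11}. For \textit{i)} I would use $\Vert\mathcal{J}_\lambda(f)\Vert_\eta\leq\Vert\mathcal{J}^+_\lambda(f)\Vert_\eta+\Vert\mathcal{J}^-_\lambda(f)\Vert_\eta$ together with part \textit{i)} of each of those two lemmas. For \textit{ii)} I would add the identities (\ref{5.16}) and (\ref{5.20}): since $\Pi^+(s)+\Pi^-(s)=I$ one has $U_B^+(t,s)+U_B^-(t,s)=U_B(t,s)$, so the two integral terms combine into $\int_l^tU_B(t,s)\lambda R_\lambda(A)f(s)ds$; and since $\mathcal{J}^\pm_\lambda(f)(l)$ lies in the closed subspace $\Pi^\pm(l)(X_0)$ (being a limit of elements of it), one has $U_B^\pm(t,l)\mathcal{J}^\pm_\lambda(f)(l)=U_B(t,l)\mathcal{J}^\pm_\lambda(f)(l)$, whence $U_B^+(t,l)\mathcal{J}^+_\lambda(f)(l)+U_B^-(t,l)\mathcal{J}^-_\lambda(f)(l)=U_B(t,l)\mathcal{J}_\lambda(f)(l)$ and (\ref{5.25}) follows.

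For \textit{iii)} and \textit{iv)} the strategy is a cut-off argument: fix $T>0$ and split each half-line integral into a finite-horizon part, controlled by the already-established uniform convergence of $v_\lambda$, plus a tail, controlled by the dichotomy estimates. Using (\ref{5.16}) with $l=t-T$ and the intertwining relation $\Pi^+(t)U_B(t,s)=U_B(t,s)\Pi^+(s)$, I would write
\[
\mathcal{J}^+_\lambda(f)(t)=\Pi^+(t)v_\lambda(t,t-T)+U_B^+(t,t-T)\mathcal{J}^+_\lambda(f)(t-T),
\]
with $v_\lambda(t,t-T)=\int_{t-T}^tU_B(t,s)\lambda R_\lambda(A)f(s)ds$. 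Splitting the integral in (\ref{5.19}) at $t+T$, using $U_B^-(t,s)=U_B^-(t,t+T)U_B^-(t+T,s)$ for $s\geq t+T$ (Remark \ref{RE5.2}) and the identity $\int_t^{t+T}U_B^-(t,s)\lambda R_\lambda(A)f(s)ds=U_B^-(t,t+T)v_\lambda(t+T,t)$ (as in the proof of Lemma \ref{LE5.11}), I would likewise write
\[
\mathcal{J}^-_\lambda(f)(t)=-U_B^-(t,t+T)v_\lambda(t+T,t)+U_B^-(t,t+T)\mathcal{J}^-_\lambda(f)(t+T).
\]
In each identity the tail term is the image, under an operator of norm $\leq\kappa e^{-\beta T}$, of $\mathcal{J}^\pm_\lambda(f)(t\mp T)$, whose norm is $\leq\widehat{C}(1,\nu)\Vert f\Vert_\eta e^{\eta|t\mp T|}$ by part \textit{i)} of Lemmas \ref{LE5.10}--\ref{LE5.11}; on a compact interval this is $\leq C\,e^{-(\beta-\eta)T}$ (here $\eta<\beta$ is essential), uniformly in $\lambda>\omega+1$, while for $\eta=0$ it is simply $\leq\kappa e^{-\beta T}\widehat{C}(1,\nu)\Vert f\Vert_0$, uniformly in $t\in\mathbb{R}$ and $\lambda$. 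Thus, given $\varepsilon>0$, one fixes $T$ so that this tail is $<\varepsilon$.

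It then remains to treat the finite-horizon terms $\Pi^+(t)v_\lambda(t,t-T)$ and $U_B^-(t,t+T)v_\lambda(t+T,t)$: these are the images of $v_\lambda(t,t-T)$ and $v_\lambda(t+T,t)$ under the uniformly bounded operators $\Pi^+(t)$ (norm $\leq\kappa$) and $U_B^-(t,t+T)$ (norm $\leq\kappa e^{-\beta T}$ on $X_0$, independent of $\lambda$). By Theorem \ref{TH1.6} these families of vectors converge, as $\lambda\to+\infty$, uniformly for $t$ in any compact interval (one enlarges the interval so that it contains both $t$ and $t\mp T$), which will give \textit{iii)}; by Proposition \ref{PROP4.1} they converge uniformly on $\mathbb{R}$ whenever $f\in BUC(\mathbb{R},X)$ has relatively compact range, which will give \textit{iv)}. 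A Cauchy-sequence argument (the uniformly small tail plus the uniform convergence of the finite-horizon part) then shows that $\mathcal{J}^+_\lambda(f)$ and $\mathcal{J}^-_\lambda(f)$, hence $\mathcal{J}_\lambda(f)=\mathcal{J}^+_\lambda(f)+\mathcal{J}^-_\lambda(f)$, converge uniformly on compact subsets of $\mathbb{R}$, respectively uniformly on $\mathbb{R}$ in the $BUC$-with-compact-range case. I expect the main obstacle to be the algebraic manipulation producing the identity for $\mathcal{J}^-_\lambda(f)$: it rests on the invertibility of $U_B^-$ on the backward subspaces $\Pi^-(t)(X_0)$ and on the flow relations of Remark \ref{RE5.2}, and one must check that every operator appearing stays uniformly bounded (independently of $\lambda$, and of $t$ in case \textit{iv)}) so that composing a bounded operator with a uniformly convergent family again yields uniform convergence.
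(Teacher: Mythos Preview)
Your proposal is correct and follows essentially the same approach as the paper: parts \textit{i)} and \textit{ii)} are obtained exactly as you describe by adding the conclusions of Lemmas \ref{LE5.10} and \ref{LE5.11} (the paper rewrites (\ref{5.16}) and (\ref{5.20}) with $\Pi^\pm$ pulled out and then sums, which is the same computation), and for \textit{iii)}--\textit{iv)} the paper also splits at $l=t\mp r$, bounds the tail by $\kappa e^{-\beta r}$ times the uniform-in-$\lambda$ bound from \textit{i)}, and controls the finite-horizon piece via Theorem \ref{TH1.6} (for \textit{iii)}) and Proposition \ref{PROP4.1} (for \textit{iv)}), concluding by a Cauchy-sequence argument. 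Your treatment of the weight $e^{\eta|t\mp T|}$ on compact sets, reducing to a factor $e^{-(\beta-\eta)T}$, is in fact slightly more careful than the paper's presentation, which suppresses this factor.
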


\begin{proof}
The proof of \textit{i)} follows from Lemmas \ref{LE5.10} and \ref{LE5.11}. 
\newline
\textbf{Proof of \textit{ii)}} Let $\eta\in [0,\beta)$, $f\in BC^\eta(%
\mathbb{R},X)$ and $(t,l)\in \Delta$ be given. Since $\mathcal{J}%
^+_\lambda(f)(l)\in \Pi^+(l) $, $\mathcal{J}^-_\lambda(f)(l)\in \Pi^-(l) $
one gets from (\ref{5.16}) and (\ref{5.20}) 
\begin{equation}  \label{5.26}
\mathcal{J}^+_\lambda(f)(t)=\displaystyle U_B(t,l)\Pi^+(l)\mathcal{J}%
^+_\lambda(f)(l)+\displaystyle \Pi^+(t)\int_{l}^t U_B(t,s)\lambda
R_\lambda(A) f(s)ds.
\end{equation}
and 
\begin{equation}  \label{5.27}
\mathcal{J}^-_\lambda(f)(t)=\displaystyle U_B(t,l)\Pi^-(l)\mathcal{J}%
^-_\lambda(f)(l)+\displaystyle \Pi^-(t)\int_{l}^t U_B(t,s)\lambda
R_\lambda(A) f(s)ds.
\end{equation}
and the result follows by adding up (\ref{5.26}) and (\ref{5.27}) combined
with the fact that $\Pi^+(t)+\Pi^-(t)=\Pi^+(l)+\Pi^-(l)=I$.\newline
\textbf{Proof of \textit{iii)}} To do this we will prove the convergence for 
$\mathcal{J}^+_\lambda$ and $\mathcal{J}^-_\lambda$ as $\lambda$ goes to $%
+\infty$. Let $\eta\in [0,\beta)$ and $f\in BC^\eta(\mathbb{R},X)$ be given. 
\newline
Let $\varepsilon>0$ be given given and fixed. Let $r>0$ be large enough such
that 
\begin{equation}  \label{5.28}
2\kappa e^{-\beta r} \widehat{C}(1,\nu) \Vert f\Vert_\eta\leq \varepsilon.
\end{equation}
We first prove the convergence for $\mathcal{J}^+_\lambda$ as $\lambda$ goes
to $+\infty$. Indeed by using (\ref{5.26}) combined with the estimate in 
\textit{i)} we obtain for each $\lambda,\mu >\omega+1$, each $t\in \mathbb{R}
$ 
\begin{equation*}
\begin{array}{ll}
\Vert \mathcal{J}^+_\lambda(f)(t)-\mathcal{J}^+_\mu(f)(t)\Vert & \leq %
\displaystyle \kappa e^{-\beta r} \Vert \mathcal{J}^+_\lambda(f)(t-r)-%
\mathcal{J}^+_\mu(f)(t-r)\Vert \\ 
& \qquad \qquad \qquad+\displaystyle \kappa \Vert \int_{t-r}^t
U_B(t,s)[\lambda R_\lambda(A)- \mu R_\mu(A) f(s)ds \Vert \\ 
& \leq \displaystyle 2 \kappa e^{-\beta r} \widehat{C}(1,\nu) \Vert
f\Vert_\eta \\ 
& \qquad \qquad \qquad+\displaystyle \kappa \Vert \int_{t-r}^t
U_B(t,s)[\lambda R_\lambda(A)- \mu R_\mu(A) f(s)ds \Vert \\ 
& 
\end{array}%
\end{equation*}
and by using (\ref{5.28}) we obtain the estimate 
\begin{equation}  \label{5.29}
\begin{array}{ll}
\Vert \mathcal{J}^+_\lambda(f)(t)-\mathcal{J}^+_\mu(f)(t)\Vert \leq
\varepsilon+\displaystyle \kappa \Vert \int_{t-r}^t U_B(t,s)[\lambda
R_\lambda(A)- \mu R_\mu(A) f(s)ds \Vert, \ \forall t\in \mathbb{R}. & 
\end{array}%
\end{equation}
Now infer from Theorem \ref{TH1.6} that 
\begin{equation*}
\underset{\lambda,\mu\rightarrow +\infty}{\lim} \int_{t-r}^t
U_B(t,s)[\lambda R_\lambda(A)- \mu R_\mu(A) f(s)ds=0,
\end{equation*}
uniformly for $t$ in a compact subset of $\mathbb{R}$ and (\ref{5.29})
yields 
\begin{equation*}
\underset{\lambda,\mu\rightarrow +\infty}{\lim} \Vert \mathcal{J}%
^+_\lambda(f)(t)-\mathcal{J}^+_\mu(f)(t)\Vert \leq \varepsilon,
\end{equation*}
uniformly for $t$ in a compact subset of $\mathbb{R}$. Since $\varepsilon>0$
is arbitrary fixed we conclude by a Cauchy sequence argument that $\underset{%
\lambda\rightarrow +\infty}{\lim} \mathcal{J}^+_\lambda(f)(t)$ exists
uniformly for $t$ in a compact subset of $\mathbb{R}$.\newline
Now we prove the convergence for $\mathcal{J}^-_\lambda$. First recall that
for each $t\in \mathbb{R}$, $U^-_B(t+r,t)$ is invertible from $\Pi^-(t)(X_0)$
into $\Pi^-(t+r)(X_0)$ with inverse $U^-_B(t,t+r)$. Then by applying $%
U^-_B(t,t+r)$ to the left side of (\ref{5.27}) one gets for all $t\in 
\mathbb{R}$ 
\begin{equation*}
\begin{array}{ll}
U^-_B(t,t+r)\mathcal{J}^-_\lambda(f)(t+r) & =\displaystyle U^-_B(t,t+r)
U_B(t+r,t)\Pi^-(t)\mathcal{J}^-_\lambda(f)(t) \\ 
& +\displaystyle U^-_B(t,t+r) \Pi^-(t+r)\int_{t}^{t+r} U_B(t+r,s)\lambda
R_\lambda(A) f(s)ds, \ \forall t\in \mathbb{R},%
\end{array}%
\end{equation*}
that is 
\begin{equation*}
\begin{array}{ll}
U^-_B(t,t+r)\mathcal{J}^-_\lambda(f)(t+r) & =\displaystyle \mathcal{J}%
^-_\lambda(f)(t)+\displaystyle U^-_B(t,t+r)\int_{t}^{t+r} U_B(t+r,s)\lambda
R_\lambda(A) f(s)ds, \ \forall t\in \mathbb{R},%
\end{array}%
\end{equation*}
so that 
\begin{equation*}
\mathcal{J}^-_\lambda(f)(t) =U^-_B(t,t+r)\mathcal{J}^-_\lambda(f)(t+r)-%
\displaystyle U^-_B(t,t+r)\int_{t}^{t+r} U_B(t+r,s)\lambda R_\lambda(A)
f(s)ds, \ \forall t\in \mathbb{R}.
\end{equation*}
Then for each $\lambda,\mu > \omega+1$, each $t\in \mathbb{R}$ 
\begin{equation*}
\begin{array}{ll}
\Vert \mathcal{J}^-_\lambda(f)(t)-\mathcal{J}^-_\mu(f)(t) \Vert & \leq
\kappa e^{-\beta r}\Vert \mathcal{J}^-_\lambda(f)(t+r)-\mathcal{J}%
^-_\mu(f)(t+r)\Vert \\ 
& +\Vert \displaystyle \int_{t}^{t+r} U^-_B(t,s)[\lambda R_\lambda(A)- \mu
R_\mu(A)]f(s)ds\Vert \\ 
& \leq 2 \kappa e^{-\beta r} \widehat{C}(1,\nu) \Vert f\Vert_\eta \\ 
& +\kappa \Vert \displaystyle \int_{t}^{t+r} U_B(t+r,s)[\lambda
R_\lambda(A)- \mu R_\mu(A)]f(s)ds\Vert, \\ 
& 
\end{array}%
\end{equation*}
and by using (\ref{5.28}) we obtain the estimate 
\begin{equation}  \label{5.30}
\begin{array}{ll}
\Vert \mathcal{J}^-_\lambda(f)(t)-\mathcal{J}^-_\mu(f)(t) \Vert \leq
\varepsilon +\kappa \Vert \displaystyle \int_{t}^{t+r} U_B(t+r,s)[\lambda
R_\lambda(A)- \mu R_\mu(A)]f(s)ds\Vert. &  \\ 
& 
\end{array}%
\end{equation}
Now we infer from Theorem \ref{TH1.6} that 
\begin{equation*}
\underset{\lambda,\mu\rightarrow +\infty}{\lim} \int_{t}^{t+r}
U_B(t+r,s)[\lambda R_\lambda(A)- \mu R_\mu(A)] f(s)ds=0,
\end{equation*}
uniformly for $t$ in a compact subset of $\mathbb{R}$ and (\ref{5.30})
yields 
\begin{equation*}
\underset{\lambda,\mu\rightarrow +\infty}{\lim} \Vert \mathcal{J}%
^-_\lambda(f)(t)-\mathcal{J}^-_\mu(f)(t)\Vert \leq \varepsilon.
\end{equation*}
uniformly for $t$ in a compact subset of $\mathbb{R}$. Since $\varepsilon>0$
is arbitrary fixed we conclude by a Cauchy sequence argument that $\underset{%
\lambda\rightarrow +\infty}{\lim} \mathcal{J}^-_\lambda(f)(t)$ exists
uniformly for $t$ in a compact subset of $\mathbb{R}$.\newline
Finally we obtain that 
\begin{equation*}
\underset{\lambda\rightarrow +\infty}{\lim} \mathcal{J}_\lambda(f)(t)=%
\underset{\lambda\rightarrow +\infty}{\lim} \mathcal{J}^+_\lambda(f)(t)+%
\underset{\lambda\rightarrow +\infty}{\lim} \mathcal{J}^-_\lambda(f)(t),
\end{equation*}
exists uniformly for $t$ in a compact subset of $\mathbb{R}$.\newline
\textbf{Proof of \textit{iv)}} The proof use the same argument as in the
proof of \textit{iii)}. The uniform convergence on $\mathbb{R}$ is obtained
by using Proposition \ref{PROP4.1} which ensures that the limits 
\begin{equation*}
\underset{\lambda,\mu\rightarrow +\infty}{\lim} \int_{t-r}^t
U_B(t,s)[\lambda R_\lambda(A)- \mu R_\mu(A)]f(s)ds=0,
\end{equation*}
and 
\begin{equation*}
\underset{\lambda,\mu\rightarrow +\infty}{\lim} \int_{t}^{t+r}
U_B(t+r,s)[\lambda R_\lambda(A)- \mu R_\mu(A)]f(s)ds=0,
\end{equation*}
are uniform for $t\in \mathbb{R}$.
\end{proof}

Now we are ready to prove the analogue of Theorem \ref{TH5.4} for our
purpose.

\begin{theorem} \label{TH5.13}
Let Assumptions \ref{ASS1.1}, \ref{ASS1.2} and \ref{ASS1.3} be satisfied.
Assume in addition that 
\begin{equation*}
\sup_{t\in \mathbb{R}} b(t) <+\infty.
\end{equation*}
Then the following assertions are equivalent

\begin{itemize}
\item[i)] The evolution family $\{ U_B(t,s)\}_{(t,s)\in \Delta}$ has an
exponential dichotomy.

\item[ii)] For each $f\in BC(\mathbb{R},X)$, there exists a unique
integrated solution $u\in BC(\mathbb{R},X_0)$ of (\ref{1.1}).
\end{itemize}

Moreover if $U_B$ has an exponential dichotomy with exponent $\beta>0$, then
for each $\eta \in [0,\beta)$ and each $f\in BC^\eta(\mathbb{R},X)$ there
exists a unique integrated solution $u\in BC^\eta(\mathbb{R},X_0)$ of (\ref%
{1.1}) which is given by 
\begin{equation*}
u(t)=\underset{\lambda \rightarrow +\infty}{\lim}\mathcal{J}_\lambda(f)(t)=%
\underset{\lambda \rightarrow +\infty}{\lim} \int_{-\infty}^{+\infty}
\Gamma_B(t,s)\lambda R_\lambda(A) f(s)ds, \ \forall t\in \mathbb{R},
\end{equation*}
where $\{ \Gamma_B(t,s) \}_{(t,s)\in \mathbb{R}^2}$ is the Green's operator
function associated to $\{ U_B(t,s) \}_{(t,s)\in \Delta}$.
\end{theorem}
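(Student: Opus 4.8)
The plan is to deduce the equivalence from the classical dichotomy characterization of Theorem~\ref{TH5.4}, using Corollary~\ref{COR3.1} as the bridge between ``integrated solution of (\ref{1.1})'' and ``solution of the integral equation (\ref{5.1bis}) for $U_B$'', and then to build the representation formula by hand with the operators $\mathcal{J}_\lambda$. First I would note that under $\sup_{t}b(t)<+\infty$ Proposition~\ref{PROP1.5} makes $\{U_B(t,s)\}_{(t,s)\in\Delta}$ an exponentially bounded evolution family on $X_0$, so Theorem~\ref{TH5.4} applies to it. The key preliminary observation is: by Corollary~\ref{COR3.1}, for $g\in BC(\mathbb{R},X_0)$ a map $u\in C(\mathbb{R},X_0)$ satisfies $u(t)=U_B(t,t_0)u(t_0)+\int_{t_0}^t U_B(t,s)g(s)\,ds$ for all $(t,t_0)\in\Delta$ \emph{if and only if} $u$ is an integrated solution of (\ref{1.1}) with $f=g$ (since, for each $t_0$, the latter forces $u|_{[t_0,+\infty)}$ to be the integrated solution with initial value $u(t_0)$, and conversely). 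This correspondence is a bijection that transfers both existence and uniqueness, and preserves membership in $BC$.

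For the implication \textit{ii)}$\Rightarrow$\textit{i)}: given any $g\in BC(\mathbb{R},X_0)\subset BC(\mathbb{R},X)$, hypothesis \textit{ii)} yields a unique integrated solution $u\in BC(\mathbb{R},X_0)$ of (\ref{1.1}) with $f=g$, hence by the observation above a unique bounded solution of (\ref{5.1bis}) for $U_B$. Thus condition \textit{ii)} of Theorem~\ref{TH5.4} holds with $\mathcal{F}(\mathbb{R},X_0)=BC(\mathbb{R},X_0)$, and Theorem~\ref{TH5.4} gives that $\{U_B(t,s)\}$ has an exponential dichotomy.

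For the implication \textit{i)}$\Rightarrow$\textit{ii)} together with the formula, I would fix $\eta\in[0,\beta)$ and $f\in BC^\eta(\mathbb{R},X)$ and set $u(t):=\lim_{\lambda\to+\infty}\mathcal{J}_\lambda(f)(t)$. By Lemma~\ref{LE5.12}\textit{iii)} this limit exists uniformly on compact subsets of $\mathbb{R}$, so $u\in C(\mathbb{R},X_0)$; by the estimate of Lemma~\ref{LE5.12}\textit{i)}, uniform in $\lambda$, $u\in BC^\eta(\mathbb{R},X_0)$ with $\|u\|_\eta\le 2\widehat{C}(1,\nu)\|f\|_\eta$ for $\nu\in(-\beta,0)$, $\eta\in[0,-\nu]$. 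To check $u$ is an integrated solution, fix $l\in\mathbb{R}$ and let $\lambda\to+\infty$ in the cocycle identity of Lemma~\ref{LE5.12}\textit{ii)}, $\mathcal{J}_\lambda(f)(t)=U_B(t,l)\mathcal{J}_\lambda(f)(l)+\int_l^t U_B(t,s)\lambda R_\lambda(A)f(s)\,ds$: the left side tends to $u(t)$, the first term to $U_B(t,l)u(l)$, and by Theorem~\ref{TH1.6} the integral tends to a limit $\bar v(t,l)$ such that $U_B(t,l)u(l)+\bar v(t,l)$ is exactly the integrated solution of (\ref{1.1}) on $[l,+\infty)$ with initial value $u(l)$. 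Hence $u|_{[l,+\infty)}$ is that integrated solution; since $l$ is arbitrary, $u$ is an integrated solution of (\ref{5.1}) on $\mathbb{R}$ in the sense of Definition~\ref{DEF1.9}, and the representation $u(t)=\lim_\lambda\int_{-\infty}^{+\infty}\Gamma_B(t,s)\lambda R_\lambda(A)f(s)\,ds$ is just the definition (\ref{5.24}) of $\mathcal{J}_\lambda(f)$. Taking $\eta=0$ gives \textit{ii)}.

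The step I expect to be the main point of care is uniqueness in $BC^\eta(\mathbb{R},X_0)$. If $u_1,u_2$ are two such solutions, $w:=u_1-u_2$ is an integrated solution of the homogeneous problem, so $w(t)=U_B(t,s)w(s)$ for $(t,s)\in\Delta$ by Proposition~\ref{PROP1.5}. Using the intertwining relation, $\Pi^+(t)w(t)=U_B^+(t,s)w(s)$, so $\|\Pi^+(t)w(t)\|\le\kappa e^{-\beta(t-s)}\|w\|_\eta e^{\eta|s|}=\kappa\|w\|_\eta e^{-\beta t}e^{(\beta-\eta)s}$ for $s\le 0$, which tends to $0$ as $s\to-\infty$ since $\beta-\eta>0$; hence $\Pi^+(t)w(t)=0$. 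Symmetrically, invertibility of $U_B(t,s)$ on $\Pi^-(s)(X_0)$ gives $\Pi^-(s)w(s)=U_B^-(s,t)w(t)$, of norm $\le\kappa\|w\|_\eta e^{\beta s}e^{-(\beta-\eta)t}\to 0$ as $t\to+\infty$; hence $\Pi^-(s)w(s)=0$, so $w\equiv 0$. The delicate parts are thus the two limit passages ($\lambda\to+\infty$, relying on Lemma~\ref{LE5.12} and Theorem~\ref{TH1.6} to identify $u$ as an integrated solution, and $s\to\mp\infty$ for uniqueness) and the bookkeeping that Corollary~\ref{COR3.1} transfers \emph{uniqueness}, not just existence, so that Theorem~\ref{TH5.4} can be invoked verbatim.
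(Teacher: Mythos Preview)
Your proof is correct and follows essentially the same route as the paper: for \textit{ii)}$\Rightarrow$\textit{i)} the paper restricts to $f\in BC(\mathbb{R},X_0)\subset BC(\mathbb{R},X)$, invokes Corollary~\ref{COR3.1} to get the integral equation, and appeals to Theorem~\ref{TH5.4}; for \textit{i)}$\Rightarrow$\textit{ii)} and the formula it simply says ``take the limit $\lambda\to+\infty$ in (\ref{5.25})'', which is exactly your argument via Lemma~\ref{LE5.12} and Theorem~\ref{TH1.6}. You have in fact written out more than the paper does: the paper's proof leaves the uniqueness in $BC^\eta(\mathbb{R},X_0)$ implicit, whereas your explicit dichotomy argument for $w=u_1-u_2$ (sending $s\to-\infty$ for $\Pi^+$ and $t\to+\infty$ for $\Pi^-$) supplies that missing detail cleanly.
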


\begin{proof}
\textbf{\textit{i)}} $\Rightarrow$\textbf{\textit{ii)}} This is a direct
consequence of Lemma \ref{LE5.12} by taking the limit when $\lambda$ goes to 
$+\infty$ in (\ref{5.25}).\newline
\textbf{\textit{ii)}} $\Rightarrow$\textbf{\textit{i)}} First of all note
that since $BC(\mathbb{R},X_0)\subset BC(\mathbb{R},X)$, the property 
\textit{ii)} ensures that for each $f\in BC(\mathbb{R},X_0)$ there exists a
unique integrated solution $u\in BC(\mathbb{R},X_0)$ of (\ref{1.1}).
Furthermore note that if $u_f\in BC(\mathbb{R},X_0)$ is a solution of (\ref%
{1.1}) for $f\in BC(\mathbb{R},X_0)$ then by Corollary \ref{COR3.1} we know
that it satisfies the integral equation 
\begin{equation*}
u_f(t)=U_{B}(t,t_0)x_0+\int_{t_0}^t U_B(t,s)f(s)ds, \ \forall t\geq t_0,
\end{equation*}
and \textit{i)} follows by Theorem \ref{TH5.4}. The proof is complete.
\end{proof}

As a consequence of the foregoing theorem we can obtain the following
persistence result for exponential dichotomy

\begin{theorem}  \label{TH5.14}
Let Assumptions \ref{ASS1.1}, \ref{ASS1.2}, \ref{ASS1.3} and \ref{ASS5.6} be
satisfied and assume in addition that 
\begin{equation*}
\sup_{t\in \mathbb{R}} b(t) <+\infty.
\end{equation*}
Then there exists $\varepsilon>0$ such that for each strongly continuous
family $\{C(t)\}_{t \in \mathbb{R}} \subset \mathcal{L}(X_0,X)$ satisfying 
\begin{equation*}
\sup_{t\in \mathbb{R}} \Vert B(t)-C(t) \Vert_{\mathcal{L}(X_0,X)} \leq
\varepsilon,
\end{equation*}
the evolution family generated by 
\begin{equation}  \label{5.31}
\dfrac{du(t)}{dt}=(A+C(t))u(t),\text{ for }t \in \mathbb{R}.
\end{equation}
has an exponential dichotomy.
\end{theorem}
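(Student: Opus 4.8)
The plan is to derive the persistence of the exponential dichotomy directly from the characterization in Theorem~\ref{TH5.13}, treating $C(t)$ as a small perturbation of $B(t)$ and solving the perturbed inhomogeneous equation by a contraction argument.

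First I would set $D(t):=C(t)-B(t)$, so that $\{D(t)\}_{t\in\mathbb{R}}\subset\mathcal{L}(X_0,X)$ is strongly continuous with $\sup_{t\in\mathbb{R}}\Vert D(t)\Vert_{\mathcal{L}(X_0,X)}\le\varepsilon$. Since $\Vert C(t)\Vert_{\mathcal{L}(X_0,X)}\le b(t)+\varepsilon=:b_C(t)$ and $\sup_{t\in\mathbb{R}}b_C(t)<+\infty$, Assumption~\ref{ASS1.3} holds for the family $C$, hence by Proposition~\ref{PROP1.5} equation~(\ref{5.31}) generates a unique exponentially bounded evolution family $\{U_C(t,s)\}_{(t,s)\in\Delta}\subset\mathcal{L}(X_0)$. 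By Theorem~\ref{TH5.13} applied with $B$ replaced by $C$, it therefore suffices to show that for each $f\in BC(\mathbb{R},X)$ the problem $\frac{du}{dt}=(A+C(t))u(t)+f(t)$ on $\mathbb{R}$ admits a unique integrated solution $u\in BC(\mathbb{R},X_0)$.

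The next step is the elementary but crucial observation that, writing $C(t)u+f(t)=B(t)u+\bigl(D(t)u+f(t)\bigr)$, a function $u\in BC(\mathbb{R},X_0)$ is an integrated solution (in the sense of Definition~\ref{DEF1.9}) of the $C$--problem with forcing $f$ if and only if it is an integrated solution of the $B$--problem with forcing $g_u:=D(\cdot)u(\cdot)+f(\cdot)$; here $t\mapsto D(t)u(t)$ is continuous, since $\Vert D(t)u(t)-D(s)u(s)\Vert\le\varepsilon\Vert u(t)-u(s)\Vert+\Vert(D(t)-D(s))u(s)\Vert$ and both terms vanish as $t\to s$ by continuity of $u$ and strong continuity of $D$, and it is bounded by $\varepsilon\Vert u\Vert_\infty$, so that $g_u\in BC(\mathbb{R},X)$. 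Since $\{U_B(t,s)\}$ has an exponential dichotomy, Theorem~\ref{TH5.13} together with Lemma~\ref{LE5.12} furnishes a well-defined linear solution operator $\mathcal{L}_B\colon BC(\mathbb{R},X)\to BC(\mathbb{R},X_0)$, $\mathcal{L}_B(g)(t):=\lim_{\lambda\to+\infty}\mathcal{J}_\lambda(g)(t)=\lim_{\lambda\to+\infty}\int_{-\infty}^{+\infty}\Gamma_B(t,s)\lambda R_\lambda(A)g(s)\,ds$, assigning to $g$ the unique bounded integrated solution of $\frac{du}{dt}=(A+B(t))u+g$. Fixing $\nu\in(-\beta,0)$ and letting $\lambda\to+\infty$ in the estimate of Lemma~\ref{LE5.12}(i) with $\eta=0$ yields $\Vert\mathcal{L}_B(g)\Vert_\infty\le 2\,\widehat{C}(1,\nu)\,\Vert g\Vert_\infty$, where $\widehat{C}(1,\nu)$ is the constant of Proposition~\ref{PROP5.8}; it depends only on $\kappa$, $\beta$ and the function $\delta^*$ of Proposition~\ref{PROP5.4}, and in particular is independent of $f$ and of the perturbation.

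It then remains to solve the fixed point equation $u=\mathcal{T}(u):=\mathcal{L}_B\bigl(D(\cdot)u(\cdot)+f(\cdot)\bigr)$ in $BC(\mathbb{R},X_0)$. Linearity of $\mathcal{L}_B$ gives, for $u_1,u_2\in BC(\mathbb{R},X_0)$, $\Vert\mathcal{T}(u_1)-\mathcal{T}(u_2)\Vert_\infty\le 2\,\widehat{C}(1,\nu)\,\varepsilon\,\Vert u_1-u_2\Vert_\infty$, so that if $\varepsilon>0$ is chosen with $2\,\widehat{C}(1,\nu)\,\varepsilon<1$ then $\mathcal{T}$ is a strict contraction; the Banach fixed point theorem then produces a unique $u\in BC(\mathbb{R},X_0)$ with $u=\mathcal{T}(u)$, which by the equivalence above is the unique bounded integrated solution of the $C$--problem with forcing $f$ (indeed any such solution is a fixed point of $\mathcal{T}$, hence coincides with $u$). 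Theorem~\ref{TH5.13} applied to $U_C$ then yields the exponential dichotomy of the evolution family generated by (\ref{5.31}). I expect the only point requiring care to be the identification of the two notions of integrated solution, which is immediate from Definition~\ref{DEF1.9} once the continuity and boundedness of the new source term are checked; all the substantive analytic input is already packaged in Theorem~\ref{TH5.13} and Lemma~\ref{LE5.12}, so beyond this bookkeeping I do not anticipate a genuine obstacle.
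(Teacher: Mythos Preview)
Your proposal is correct and follows essentially the same approach as the paper: rewrite the $C$--problem as the $B$--problem with forcing $D(\cdot)u(\cdot)+f(\cdot)$, use the bounded solution operator $\mathcal{J}=\lim_{\lambda\to\infty}\mathcal{J}_\lambda$ from Lemma~\ref{LE5.12} (with the estimate~(\ref{5.24bis}) at $\eta=0$) to set up a contraction in $BC(\mathbb{R},X_0)$, and conclude via Theorem~\ref{TH5.13}. The paper only sketches this classical argument, whereas you have filled in the bookkeeping (Assumption~\ref{ASS1.3} for $C$, continuity of $t\mapsto D(t)u(t)$, and the identification of the two notions of integrated solution) carefully.
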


\begin{proof}
The proof of this theorem is classical. Then we will only sketch the proof.
Note that the the evolution family generated by (\ref{5.31}) has and
exponential dichotomy if and only if for each $f\in BC(\mathbb{R},X)$ there
exists a unique $u\in BC(\mathbb{R},X_{0})$ satisfying 
\begin{equation*}
\dfrac{du(t)}{dt}=(A+C(t))u(t)+f(t),\text{ for }t\in \mathbb{R}.
\end{equation*}%
or equivalently 
\begin{equation*}
\dfrac{du(t)}{dt}=(A+B(t))u(t)+[C(t)-B(t)]u(t)+f(t),\text{ for }t\in \mathbb{%
R}.
\end{equation*}%
This is equivalent to solve for each $f\in BC(\mathbb{R},X)$ the fixed point
problem to find $u\in BC(\mathbb{R},X_{0})$ 
\begin{equation*}
u(t)=\mathcal{J}(\left[ C(.)-B(.)\right] u(.)+f(.))(t)
\end{equation*}%
where 
\begin{equation*}
\mathcal{J}(g)(t)=\underset{\lambda \rightarrow +\infty }{\lim }%
\int_{-\infty }^{+\infty }\Gamma _{B}(t,s)\lambda R_{\lambda }(A)g(s)ds,\
\forall t\in \mathbb{R}
\end{equation*}%
which can be performed by using the uniform estimates (\ref{5.24bis}) (for $%
\eta =0$) obtained in Lemma \ref{LE5.12}.
\end{proof}

\section{Example}

In order to illustrate our results we will apply some of the result to parabolic equation. Let $p\in \lbrack 1,+\infty )$ and $I:=(0,1)$.  Consider the following parabolic equation with non local boundary condition for each initial time $t_0 \in \mathbb{R}$ 
\begin{equation}
\left\{ 
\begin{array}{l}
\dfrac{\partial u(t,x)}{\partial t}=\dfrac{\partial
^{2}u(t,x)}{\partial x^{2}}+\alpha u(t,x)+g(t,x),\text{ for }t\geq t_0
\text{ and }x\in (0,1), \vspace{0.2cm}\\ 
-\dfrac{\partial u(t,0)}{\partial x}=\displaystyle\int_{I}\beta
_{0}(t,x)\varphi (x)dx+h_{0}(t) \vspace{0.2cm}\\ 
+\dfrac{\partial u(t,1)}{\partial x}=\displaystyle\int_{I}\beta
_{1}(t,x)\varphi (x)dx+h_{1}(t) \vspace{0.2cm}\\ 
u(t_0,.)=\varphi \in L^{p}(I,\mathbb{R}),%
\end{array}%
\right.   \label{6.1}
\end{equation}%
with $\alpha >0$, $g\in C(\mathbb{R},L^{p}(I,\mathbb{R}))$,  $
h_{0},h_{1}\in C(\mathbb{R},\mathbb{R})$ and $\beta _{0},\beta _{1}\in C(\mathbb{R},L^{q}(I,\mathbb{R}))$ (with $\frac{1}{p}+\frac{1}{q}=1$).

\bigskip 

\noindent \textbf{Abstract reformulation:} In order to incorportate the
boundary condition into the state variable, we consider%
\begin{equation*}
X:=\mathbb{R}^{2}\mathbb{\times }L^{p}(I,\mathbb{R})
\end{equation*}%
which is a Banach space endowed with the usual product norm 
\begin{equation*}
\left\Vert \left( 
\begin{array}{c}
x_{0} \\ 
x_{1} \\ 
\varphi 
\end{array}%
\right) \right\Vert =\left\vert x_{0}\right\vert +\left\vert
x_{1}\right\vert +\left\Vert \varphi \right\Vert _{L^{p}}
\end{equation*}%
and we set 
\begin{equation*}
X_{0}:=\left\{ 0_{\mathbb{R}^{2}}\right\} \times L^{p}(I,\mathbb{R}).
\end{equation*}%
We consider $\mathcal{A}:D(\mathcal{A})\subset X\rightarrow X$ the linear operator defined by 
\begin{equation*}
\mathcal{A}\left( 
\begin{array}{c}
0_{\mathbb{R}^2} \\ 
\varphi 
\end{array}%
\right) :=\left( 
\begin{array}{c}
\varphi ^{\prime }(0) \\ 
-\varphi ^{\prime }(1) \\ 
\varphi ^{\prime \prime } 
\end{array}%
\right) 
\end{equation*}%
with 
\begin{equation*}
D(\mathcal{A}):=\left\{ 0_{\mathbb{R}^{2}}\right\} \times W^{2,p}(I,\mathbb{R}).
\end{equation*}
By construction $\mathcal{A}_{0}$ the part of $\mathcal{A}$ in $X_{0}$ coincides with the usual
formulation for the parabolic equation (\ref{6.1}) with homogeneous boundary conditions. Indeed
$\mathcal{A}_{0}:D(\mathcal{A}_{0})\subset
X_{0}\rightarrow X_{0}$ is a linear operator on $X_{0}$ defined by
\begin{equation*}
\mathcal{A}_{0}\left(
\begin{array}{c}
0_{\mathbb{R}^2} \\
\varphi%
\end{array}%
\right) =\left(
\begin{array}{c}
0_{\mathbb{R}^2} \\
 \varphi ^{\prime \prime }%
\end{array}%
\right)
\end{equation*}%
with
\begin{equation*}
D(\mathcal{A}_{0})=\{ 0_{\mathbb{R}^2} \} \times \left\{ \varphi \in W^{2,p}\left( \left(
0,1\right) ,\mathbb{R} \right) :\varphi ^{\prime }(0)=\varphi ^{\prime }(1)=0 \right\} .
\end{equation*}%
In the following lemma we will first summarize some classical properties for the linear operator $\mathcal{A}_0$.
\begin{lemma}
\label{LE6.1}The linear operator $\mathcal{A}_{0}$ is the infinitesimal generator of $\left\lbrace T_{\mathcal{A}_0}(t) \right\rbrace_{t\geq 0}$ an analytic semigroup of bounded linear operator on $X_{0}$. Moreover $ T_{\mathcal{A}_0}(t) $ is compact for each $t>0$ and $\left( 0,+\infty \right) \subset \rho
(\mathcal{A}_0)$. The spectrum of $\mathcal{A}_{0}$ is given by  
$$
\sigma(\mathcal{A}_0)=\left\lbrace -  (\pi k)^2:k \in \mathbb{N} \right\rbrace
$$
and each eigenvalue $\lambda_k:=- (\pi k)^2$ is associated to the eigenfunction 
$$
\psi_k(x):=sin(\pi kx).
$$
Moreover each eigenvalue $\lambda_k$ is simple and the projector on the generalized eigenspace associated to this eigenvalue is given by 
$$
\Pi_{k,0}\left(
\begin{array}{c}
0_{\mathbb{R}^2}\\
\varphi
\end{array}
\right)
:=
\left(
\begin{array}{c}
0_{\mathbb{R}^2}\\
\frac{\int_0^1\psi_k(r) \varphi(r)dr}{\int_0^1\psi_k(r)^2dr} \psi_k
\end{array}
\right).
$$
\end{lemma}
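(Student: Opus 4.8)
The plan is to reduce everything to the classical spectral theory of the one-dimensional Laplacian on $I=(0,1)$. First I would establish that $\mathcal{A}_0$ generates an analytic semigroup on $X_0\cong L^p(I,\mathbb{R})$: under this identification $\mathcal{A}_0$ is the realization of $\varphi\mapsto\varphi''$ with the boundary conditions encoded in $D(\mathcal{A}_0)$, and it is classical that such a realization is sectorial and hence generates an analytic semigroup for every $p\in[1,+\infty)$ (see e.g. Pazy or Lunardi). Compactness of $T_{\mathcal{A}_0}(t)$ for $t>0$ then follows from compactness of the resolvent: for $\lambda\in\rho(\mathcal{A}_0)$ the operator $R_\lambda(\mathcal{A}_0)$ maps $L^p(I)$ into $D(\mathcal{A}_0)\subset W^{2,p}(I)$, and the Rellich--Kondrachov embedding $W^{2,p}(I)\hookrightarrow L^p(I)$ is compact on the bounded interval $I$; an analytic (hence eventually norm-continuous) semigroup with compact resolvent is compact for every $t>0$.

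Second, to obtain the spectrum I would solve the eigenvalue problem $\varphi''=\lambda\varphi$ on $I$ subject to the boundary conditions in $D(\mathcal{A}_0)$. Writing $\lambda=-\mu^2$, the candidate solutions are linear combinations of $\sin(\mu x)$ and $\cos(\mu x)$; imposing the boundary conditions forces the quantization $\mu=\pi k$ with $k\in\mathbb{N}$, so that $\lambda_k=-(\pi k)^2$ and the associated mode is $\psi_k(x)=\sin(\pi k x)$. Since the resolvent is compact, $\sigma(\mathcal{A}_0)$ consists exactly of these isolated eigenvalues of finite multiplicity, which yields $\sigma(\mathcal{A}_0)=\{-(\pi k)^2:k\in\mathbb{N}\}$ and in particular $(0,+\infty)\subset\rho(\mathcal{A}_0)$.

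Third, I would prove simplicity together with the projector formula. For each $k$ the two boundary conditions cut the second-order equation down to the one-dimensional solution space $\mathrm{span}(\psi_k)$, so $\lambda_k$ is geometrically simple; algebraic simplicity follows by ruling out Jordan chains, using that the second-derivative operator is symmetric for the $L^2(I)$ inner product on $D(\mathcal{A}_0)$ and that $\{\psi_k\}_{k\in\mathbb{N}}$ is an orthogonal basis of $L^2(I)$. The Riesz spectral projector $\Pi_{k,0}=\frac{1}{2\pi i}\oint_{\Gamma_k}R_\lambda(\mathcal{A}_0)\,d\lambda$ onto the $\lambda_k$-eigenspace is therefore rank one, and evaluating it on the orthogonal basis $\{\psi_j\}$ identifies it with the $L^2$-orthogonal projection onto $\mathrm{span}(\psi_k)$, namely $\varphi\mapsto\frac{\int_0^1\psi_k(r)\varphi(r)\,dr}{\int_0^1\psi_k(r)^2\,dr}\,\psi_k$, which is exactly the stated expression for $\Pi_{k,0}$.

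The main obstacle is the $L^p$ setting for $p\neq2$: orthogonality and the eigenbasis are natural in $L^2$, whereas the problem lives in $L^p$. I would therefore verify with care (a) that the resolvent is compact directly in $L^p$, which the Rellich embedding on a bounded interval provides for every $p\in[1,+\infty)$, and (b) that the rank-one $L^2$-orthogonal formula above really defines a bounded projector on $L^p(I)$ and coincides with the Riesz projector. Point (b) is obtained by computing both operators on the smooth eigenfunctions $\psi_j$ and extending by density, using that each $\psi_k\in L^q(I)$ so that the pairing $\int_0^1\psi_k(r)\varphi(r)\,dr$ is well defined and bounded for $\varphi\in L^p(I)$.
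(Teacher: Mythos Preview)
The paper gives no proof of this lemma; it is presented as ``summarizing some classical properties'' of the Neumann Laplacian, with the understanding that all claims are standard. Your outline---sectoriality in $L^p$, compactness of the resolvent via Rellich--Kondrachov, hence compact analytic semigroup, then the explicit eigenvalue computation and Riesz projectors---is exactly how one would justify such a summary, and it goes well beyond what the paper itself supplies.

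That said, there is a genuine gap, and it lies precisely in the one place where you defer to the statement rather than compute. The domain is
\[
D(\mathcal{A}_0)=\{0_{\mathbb{R}^2}\}\times\bigl\{\varphi\in W^{2,p}(I):\varphi'(0)=\varphi'(1)=0\bigr\},
\]
i.e.\ Neumann boundary conditions. If you actually impose $\varphi'(0)=0$ on $\varphi(x)=a\cos(\mu x)+b\sin(\mu x)$ you get $b\mu=0$, forcing $b=0$ (for $\mu\neq0$), and then $\varphi'(1)=0$ gives $a\mu\sin\mu=0$, hence $\mu=\pi k$. The eigenfunctions are therefore $\cos(\pi k x)$, not $\sin(\pi k x)$; indeed $\psi_k(x)=\sin(\pi kx)$ satisfies $\psi_k'(0)=\pi k\neq0$ for $k\ge1$ and is identically zero for $k=0$, so it is not even in $D(\mathcal{A}_0)$. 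Your sentence ``imposing the boundary conditions forces \ldots\ the associated mode is $\psi_k(x)=\sin(\pi kx)$'' simply reproduces the statement without checking it, and the check fails. The lemma as printed appears to contain a typo (sines should be cosines), and a proof that actually carries out the boundary-value computation would detect it. With that correction, your argument for simplicity and for the projector formula goes through unchanged, since $\{\cos(\pi k\,\cdot)\}_{k\ge0}$ is the relevant $L^2$-orthogonal basis.
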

Set
\begin{equation*}
\Omega _{\omega }=\left\{ \lambda \in \mathbb{C}:\mathcal{R}e \left( \lambda
\right) >\omega \right\} ,\ \forall \omega \in \mathbb{R},
\end{equation*}%
define for $\lambda \in \mathbb{C},$%
\begin{equation*}
\Delta \left( \lambda \right) := \mu^2 (e^{\mu }- e^{-\mu }),
\end{equation*}%
where
\begin{equation*}
\mu :=\sqrt{\lambda }.
\end{equation*}
Next we compute explicitly the resolvent of $\mathcal{A}$. 
\begin{lemma} \label{LE6.2} The resolvent of $\mathcal{A}$ is given by 
For each $\omega _{\mathcal{A}}\geq 0,$ such that
\begin{equation*}
\Omega _{\omega _{\mathcal{A}}}\subset \left\{ \lambda \in \mathbb{C}:\Delta \left(
\lambda \right) \neq 0\right\} \subset \rho \left( \mathcal{A}\right) ,
\end{equation*}%
and for each $\lambda :=\mu ^{2}\in \Omega _{\omega _{\mathcal{A}}}$ we have
\begin{equation*}
\begin{array}{l}
(\lambda I-\mathcal{A})
\left(
\begin{array}{c}
0_{\mathbb{R}^2} \\
\varphi
\end{array}
\right) 
=\left(
\begin{array}{c}
y_{0} \\
y_{1} \\
f%
\end{array}%
\right)
\Leftrightarrow \\
\left(
\begin{array}{c}
0_{\mathbb{R}^2} \\
\varphi
\end{array}
\right) =(\lambda I-\mathcal{A})^{-1}\left(
\begin{array}{c}
y_{0} \\
y_{1} \\
f%
\end{array}%
\right) 
\Leftrightarrow \\

\varphi (x)=\dfrac{\Delta_1(x)}{\Delta \left( \lambda \right) } \dfrac{1}{\mu} y_{0} +\dfrac{\Delta_2(x)}{\Delta \left( \lambda \right) } \dfrac{1}{\mu} y_{1}+\displaystyle\dfrac{\Delta _{1}(x)}{\Delta \left( \lambda \right) }\frac{1}{2\mu }
\int_{0}^{1}e^{-\mu s}f(s)ds\\  \qquad \ \ \  + \displaystyle
\dfrac{\Delta _{2}(x)}{\Delta \left( \lambda
\right) }\frac{1}{2\mu }\int_{0}^{1}e^{-\mu (1-s)}f(s)ds+\frac{1}{2\mu }%
\int_{0}^{1}e^{-\mu \left\vert x-s\right\vert }f(s)ds%
\end{array}%
\end{equation*}%
where
\begin{equation*}
\Delta _{1}(x)= \mu^2\left[  e^{\mu (1-x)}+ e^{-\mu (1-x)}\right] \text{ and } \Delta _{2}(x)= \mu^2 \left[  e^{-\mu x}+  e^{\mu x}\right].
\end{equation*}%
\end{lemma}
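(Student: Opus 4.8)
The plan is to solve the resolvent equation explicitly. Fix $\lambda\in\Omega_{\omega_{\mathcal{A}}}$ with $\omega_{\mathcal{A}}\ge 0$ and let $\mu=\sqrt{\lambda}$ be the principal branch, so $\mathcal{R}e(\mu)>0$; this choice is legitimate because $\omega_{\mathcal{A}}\ge 0$ keeps $\lambda$ off $(-\infty,0]$. Given arbitrary data $(y_{0},y_{1},f)\in X=\mathbb{R}^{2}\times L^{p}(I,\mathbb{R})$, the identity $\mathcal{A}(0_{\mathbb{R}^{2}},\varphi)=(\varphi'(0),-\varphi'(1),\varphi'')$ shows that $(\lambda I-\mathcal{A})(0_{\mathbb{R}^{2}},\varphi)=(y_{0},y_{1},f)$ is equivalent to finding $\varphi\in W^{2,p}(I,\mathbb{R})$ with $\lambda\varphi-\varphi''=f$ in $L^{p}(I,\mathbb{R})$, $\varphi'(0)=-y_{0}$ and $\varphi'(1)=y_{1}$; note that $D(\mathcal{A})$ carries no boundary constraint, so the only requirement on $\varphi$ is the interior regularity $\varphi\in W^{2,p}$.

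Next I would exhibit a particular solution of $\lambda\varphi-\varphi''=f$, namely $\varphi_{p}(x):=\frac{1}{2\mu}\int_{0}^{1}e^{-\mu|x-s|}f(s)\,ds$. A straightforward double differentiation shows $\mu^{2}\varphi_{p}-\varphi_{p}''=f$; since moreover the kernel $e^{-\mu|x-s|}$ is bounded and continuous on $I\times I$ and $\varphi_{p}''=\mu^{2}\varphi_{p}-f\in L^{p}$, one gets $\varphi_{p}\in W^{2,p}(I,\mathbb{R})$ for every $p\in[1,\infty)$. The general solution is $\varphi=c_{1}e^{\mu x}+c_{2}e^{-\mu x}+\varphi_{p}$, and imposing $\varphi'(0)=-y_{0}$, $\varphi'(1)=y_{1}$ gives a $2\times2$ linear system for $(c_{1},c_{2})$ whose determinant is exactly $\mu^{2}(e^{\mu}-e^{-\mu})=\Delta(\lambda)$. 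Hence the system has a unique solution precisely when $\Delta(\lambda)\neq 0$; solving by Cramer's rule, substituting $c_{1},c_{2}$ back into $\varphi=c_{1}e^{\mu x}+c_{2}e^{-\mu x}+\varphi_{p}$, and regrouping the combinations $e^{\pm\mu(1-x)}$ and $e^{\pm\mu x}$ as $\mu^{-2}\Delta_{1}(x)$ and $\mu^{-2}\Delta_{2}(x)$ respectively, yields exactly the claimed closed form for $(\lambda I-\mathcal{A})^{-1}(y_{0},y_{1},f)$.

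It then remains to read off the statements about $\rho(\mathcal{A})$. The above shows that for every $\lambda$ with $\Delta(\lambda)\neq 0$ the operator $\lambda I-\mathcal{A}:D(\mathcal{A})\to X$ is a bijection, and the explicit formula realises its inverse as a bounded operator on $X$ (each ingredient — multiplication by the bounded functions $\Delta_{j}(x)/\Delta(\lambda)$, integration against bounded kernels, and the two rank-one terms involving $y_{0},y_{1}$ — is bounded on the relevant spaces), so $\{\lambda\in\mathbb{C}:\Delta(\lambda)\neq 0\}\subset\rho(\mathcal{A})$. Finally, $\Delta(\lambda)=2\mu^{2}\sinh\mu$ vanishes only when $\mu=0$ or $\mu\in i\pi\mathbb{Z}$, i.e. only when $\lambda=-n^{2}\pi^{2}$ for some integer $n\ge 0$, in particular only when $\lambda\le 0$; hence for any $\omega_{\mathcal{A}}\ge 0$ the half-plane $\Omega_{\omega_{\mathcal{A}}}=\{\mathcal{R}e(\lambda)>\omega_{\mathcal{A}}\}$ avoids all these points, which establishes the chain $\Omega_{\omega_{\mathcal{A}}}\subset\{\Delta(\lambda)\neq0\}\subset\rho(\mathcal{A})$.

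The work is entirely computational: the one place that needs care is solving and then rearranging the $2\times2$ system into the stated form (keeping track of signs and of the chosen branch of $\sqrt{\cdot}$), together with the routine checks that $\varphi_{p}$ solves the ODE and that the integral operators in the formula are bounded on $L^{p}(I,\mathbb{R})$ so that $\varphi\in W^{2,p}$. There is no conceptual obstacle.
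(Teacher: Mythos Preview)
Your proposal is correct and follows essentially the same route as the paper: build the particular solution $\varphi_{p}(x)=\frac{1}{2\mu}\int_{0}^{1}e^{-\mu|x-s|}f(s)\,ds$, add the homogeneous combination $c_{1}e^{\mu x}+c_{2}e^{-\mu x}$, impose the two boundary conditions $-\varphi'(0)=y_{0}$, $\varphi'(1)=y_{1}$, and solve the resulting $2\times 2$ system whose determinant is $\Delta(\lambda)$. The only difference is that you supply a little more justification than the paper does---the $W^{2,p}$ regularity of $\varphi_{p}$, boundedness of the inverse, and the explicit identification of the zeros of $\Delta$ to verify the chain $\Omega_{\omega_{\mathcal{A}}}\subset\{\Delta(\lambda)\neq 0\}\subset\rho(\mathcal{A})$---whereas the paper's proof records only the linear-algebra step.
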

\begin{proof}
In order to compute the resolvent\ we set
\begin{equation*}
u(x):=\frac{1}{2\mu }\int_{0}^{1}e^{-\mu \left\vert x-s\right\vert }f(s)ds=%
\frac{1}{2\mu }\int_{-\infty }^{+\infty }e^{-\mu \left\vert x-s\right\vert }%
\bar{f}(s)ds
\end{equation*}%
where $\bar{f}$ extend $f$ by $0$ on $\mathbb{R}\setminus \left[ 0,1\right]
. $ We have%
\begin{equation*}
u(x)=\frac{1}{2\mu }\left[ \int_{-\infty }^{x}e^{-\mu \left( x-s\right) }%
\bar{f}(s)ds+\int_{x}^{+\infty }e^{\mu \left( x-s\right) }\bar{f}(s)ds\right]
\end{equation*}%
so
\begin{equation*}
u^{\prime }(x)=-\frac{1}{2}\int_{-\infty }^{x}e^{-\mu (x-s)}\bar{f}(s)ds+%
\frac{1}{2}\int_{x}^{+\infty }e^{\mu (x-s)}\bar{f}(s)ds.
\end{equation*}%
We set%
\begin{equation*}
u(0)=\gamma _{0}:=\frac{1}{2\mu }\int_{0}^{1}e^{-\mu s}f(s)ds\text{ and }%
u(1)=\gamma _{1}:=\frac{1}{2\mu }\int_{0}^{1}e^{-\mu (1-s)}f(s)ds
\end{equation*}%
and we observe that
\begin{equation*}
u^{\prime }(0)=\mu \gamma _{0}\text{ and }u^{\prime }(1)=-\mu \gamma _{1}
\end{equation*}%
We set
\begin{equation*}
u_{1}(x):=e^{-\mu x}\text{ and }u_{2}(x):=e^{\mu x}\text{.}
\end{equation*}%
In order to solve the problem
\begin{equation*}
(\lambda I-\mathcal{A})\left(
\begin{array}{c}
0 \\
0 \\
\varphi%
\end{array}%
\right) =\left(
\begin{array}{c}
y_{0} \\
y_{1} \\
f%
\end{array}%
\right)
\end{equation*}%
we look for $\varphi $ under the form
\begin{equation*}
\varphi (x)=u(x)+z_{1}u_{1}(x)+z_{2}u_{2}(x),
\end{equation*}%
where $z_{1},z_{2}\in \mathbb{R}$.

We observe that to verify the boundary conditions 
\begin{equation*}
-\varphi^{\prime }(0)=y_0 \text{ and } \varphi^{\prime }(1)=y_1
\end{equation*}
is equivalent to 
\begin{equation*}
\left\lbrace
\begin{array}{ll}
 u^{\prime }(0)+z_{1}u_{1}^{\prime }(0)+z_{2}u_{2}^{\prime}(0)  =-y_0\\
 u^{\prime }(0)+z_{1}u_{1}^{\prime }(0)+z_{2}u_{2}^{\prime}(0)=y_1
\end{array}
\right.
\end{equation*}
so we must solve the system
\begin{eqnarray*}
z_{1} u_{1}^{\prime }(0)+z_{2} u_{2}^{\prime }(0)&=&-y_{0}-u^{\prime }(0) \\
z_{1}u_{1}^{\prime }(1)+z_{2}u_{2}^{\prime }(1)&=&y_{1}-u^{\prime }(1)
\end{eqnarray*}%
which is equivalent to%
\begin{equation*}
\begin{array}{lll}
 -\mu  z_{1}&+ \mu z_{2} &=-y_{0}-\mu \gamma _{0}\\
-\mu e^{-\mu }z_{1}&+\mu e^{\mu }z_{2} &=y_{1}+\mu \gamma_{1}
\end{array}%
\end{equation*}%
hence
\begin{equation*}
\begin{array}{l}
z_{1}=\frac{1}{\Delta \left( \lambda \right)}\left[ - \mu e^{\mu }\left( -y_{0}-\mu \gamma _{0} \right) +  \mu \left( y_{1}+\mu \gamma_{1}\right) \right] \\
z_{2}=\frac{1}{\Delta \left( \lambda \right) } \left[ - \mu e^{-\mu }\left( -y_{0}-\mu \gamma _{0} \right) +  \mu \left( y_{1}+\mu \gamma_{1} \right) \right]
\end{array}
\end{equation*}
and the result follows.
\end{proof}

The following estimation shows that $\mathcal{A}$ is not Hille-Yosida. 
\begin{lemma}\label{LE6.3} We have the following estimations 
\begin{equation*}
0<\liminf_{\lambda \left( \in \mathbb{R}\right) \rightarrow +\infty }\lambda ^{%
\frac{1}{p^{\ast }}}\left\Vert \left( \lambda I-\mathcal{A}\right) ^{-1}\right\Vert _{%
\mathcal{L}(X)} \leq \limsup_{\lambda \left( \in \mathbb{R}\right) \rightarrow +\infty }\lambda ^{%
\frac{1}{p^{\ast }}}\left\Vert \left( \lambda I-\mathcal{A}\right) ^{-1}\right\Vert _{%
\mathcal{L}(X)}<+\infty ,
\end{equation*}%
with 
$$
p^{\ast }=\dfrac{2p}{1+p}.
$$
\end{lemma}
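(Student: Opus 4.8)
The plan is to extract everything from the explicit resolvent formula of Lemma \ref{LE6.2}. Fix $\lambda=\mu^{2}$ with $\mu$ real and large, take $(y_{0},y_{1},f)\in X$, and let $\varphi$ denote the $L^{p}$-component of $(\lambda I-\mathcal{A})^{-1}(y_{0},y_{1},f)$, so that $\|(\lambda I-\mathcal{A})^{-1}(y_{0},y_{1},f)\|_{X}=\|\varphi\|_{L^{p}(I)}$. The $\mu^{2}$ factors cancel in $\Delta_{1}/\Delta$ and $\Delta_{2}/\Delta$, leaving the elementary two-sided bounds, valid for $x\in[0,1]$ and $\mu\geq 1$,
\begin{equation*}
e^{-\mu x}\leq \frac{\Delta_{1}(x)}{\Delta(\lambda)}=\frac{e^{\mu(1-x)}+e^{-\mu(1-x)}}{e^{\mu}-e^{-\mu}}\leq \frac{2}{1-e^{-2}}\,e^{-\mu x},\qquad \frac{\Delta_{2}(x)}{\Delta(\lambda)}\leq \frac{2}{1-e^{-2}}\,e^{-\mu(1-x)}.
\end{equation*}
Together with the exact identity $\|e^{-\mu(\cdot)}\|_{L^{r}(0,1)}^{r}=(1-e^{-r\mu})/(r\mu)$ (hence $\|e^{-\mu(\cdot)}\|_{L^{r}(0,1)}\asymp\mu^{-1/r}$ for $1\leq r<\infty$) and $\|e^{-\mu|\cdot|}\|_{L^{1}(\R)}=2/\mu$, these are the only quantitative facts I will use; note $\tfrac{2}{p^{\ast}}=1+\tfrac1p$.

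For the upper bound I estimate separately, in $L^{p}(I)$, the five terms appearing in $\varphi(x)$. The two boundary terms $\tfrac{\Delta_{1}(x)}{\Delta(\lambda)}\tfrac{y_{0}}{\mu}$ and $\tfrac{\Delta_{2}(x)}{\Delta(\lambda)}\tfrac{y_{1}}{\mu}$ are bounded by $C\mu^{-1}\|e^{-\mu(\cdot)}\|_{L^{p}(0,1)}(|y_{0}|+|y_{1}|)\leq C\mu^{-(1+1/p)}(|y_{0}|+|y_{1}|)$. For the two terms built on $\tfrac1{2\mu}\int_{0}^{1}e^{-\mu s}f(s)\,ds$ and $\tfrac1{2\mu}\int_{0}^{1}e^{-\mu(1-s)}f(s)\,ds$, H\"older's inequality bounds the scalar factor by $C\mu^{-1}\mu^{-1/p'}\|f\|_{L^{p}}$ (with $1/p'=0$ when $p=1$), and multiplying by $\|\Delta_{j}(\cdot)/\Delta(\lambda)\|_{L^{p}(0,1)}\leq C\mu^{-1/p}$ gives $C\mu^{-2}\|f\|_{L^{p}}$. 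Finally, extending $f$ by zero and applying Young's convolution inequality on $\R$, the term $\tfrac1{2\mu}\int_{0}^{1}e^{-\mu|x-s|}f(s)\,ds$ has $L^{p}$-norm at most $\tfrac1{2\mu}\|e^{-\mu|\cdot|}\|_{L^{1}(\R)}\|f\|_{L^{p}}=\mu^{-2}\|f\|_{L^{p}}$. Since $\mu^{-2}\leq\mu^{-(1+1/p)}$ for $\mu\geq1$ and every $p\geq1$, summing yields $\|\varphi\|_{L^{p}}\leq C_{0}\mu^{-(1+1/p)}\|(y_{0},y_{1},f)\|_{X}=C_{0}\lambda^{-1/p^{\ast}}\|(y_{0},y_{1},f)\|_{X}$, hence $\limsup_{\lambda\to+\infty}\lambda^{1/p^{\ast}}\|(\lambda I-\mathcal{A})^{-1}\|_{\mathcal{L}(X)}\leq C_{0}<+\infty$.

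For the lower bound I test against $(y_{0},y_{1},f)=(1,0,0)$: then $\varphi(x)=\tfrac1\mu\tfrac{\Delta_{1}(x)}{\Delta(\lambda)}\geq\tfrac1\mu e^{-\mu x}$, so $\|\varphi\|_{L^{p}}\geq\tfrac1\mu\|e^{-\mu(\cdot)}\|_{L^{p}(0,1)}\geq c_{p}\mu^{-(1+1/p)}=c_{p}\lambda^{-1/p^{\ast}}$, and since $\|(1,0,0)\|_{X}=1$ this gives $\liminf_{\lambda\to+\infty}\lambda^{1/p^{\ast}}\|(\lambda I-\mathcal{A})^{-1}\|_{\mathcal{L}(X)}\geq c_{p}>0$. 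Combining the two displays proves the lemma. All of the work is routine kernel bookkeeping; the only delicate point is the endpoint $p=1$ (where $p'=\infty$, $1/p'=0$, and $p^{\ast}=1$), for which one checks that the three $f$-contributions still scale like $\mu^{-2}=\lambda^{-1}=\lambda^{-1/p^{\ast}}$ rather than degenerating — which they do. Conceptually there is no real obstacle: the exponent $1/p^{\ast}=\tfrac12(1+\tfrac1p)$ simply records that inverting $\partial_{x}^{2}$ contributes a factor $\lambda^{-1}$, while the boundary-to-interior part of the resolvent only gains $\mu^{-1}\cdot\mu^{-1/p}$ in $L^{p}$-scaling, and it is this boundary part that dominates.
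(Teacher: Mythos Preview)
Your argument is correct. For the lower bound you and the paper do essentially the same thing: test the resolvent on a pure boundary datum (you use $(1,0,0)$, the paper uses $(0,y_{1},0)$) and compute the $L^{p}$ norm of the resulting exponential profile, obtaining the scaling $\mu^{-(1+1/p)}=\lambda^{-1/p^{\ast}}$.

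Where you go further is the upper bound. The paper's proof only establishes $\liminf_{\lambda\to+\infty}\lambda^{1/p^{\ast}}\gamma_{\lambda}>0$ for the specific quantity $\gamma_{\lambda}$ arising from the test vector, then writes ``and the result follows''; the $\limsup<+\infty$ part of the statement (which requires controlling the full operator norm, not just one direction) is not actually argued. You supply this missing half by bounding all five pieces of the resolvent formula of Lemma~\ref{LE6.2}: the two boundary terms via $\|e^{-\mu(\cdot)}\|_{L^{p}}\asymp\mu^{-1/p}$, the two mixed terms via H\"older plus the same $L^{p}$ estimate, and the convolution term via Young's inequality. Each of your estimates is correct, including the endpoint $p=1$ check. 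So your proof is in fact more complete than the one in the paper.
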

\begin{proof} Let $\lambda>0$ be  large enough. We have 
$$
\left\Vert (\lambda I-\mathcal{A})^{-1}\left(
\begin{array}{c}
0 \\
y_{1} \\
0_{L^p}
\end{array}%
\right) \right\Vert=\vert y_1 \vert  \dfrac{\sqrt{\lambda } }{\Delta \left( \lambda \right) } \left\Vert  e^{\sqrt{\lambda } \cdot}+e^{-\sqrt{\lambda } \cdot}\right\Vert_{L^p}
$$
Set 
$$
\gamma_{\lambda }:= \dfrac{\sqrt{\lambda } }{\Delta \left( \lambda \right) } \left\Vert  e^{\sqrt{\lambda } \cdot}+e^{-\sqrt{\lambda } \cdot}\right\Vert_{L^p}
$$
we have 
$$
\dfrac{\sqrt{\lambda } }{\Delta \left( \lambda \right) } \left[ \Vert  e^{\sqrt{\lambda } \cdot}\Vert_{L^p}- \Vert e^{-\sqrt{\lambda } \cdot}\Vert_{L^p} \right] \leq \gamma_{\lambda } \leq \dfrac{\sqrt{\lambda } }{\Delta \left( \lambda \right) } \left[ \Vert  e^{\sqrt{\lambda } \cdot}\Vert_{L^p}+ \Vert e^{-\sqrt{\lambda } \cdot}\Vert_{L^p} \right]
$$
and 
$$
\dfrac{\sqrt{\lambda } }{\Delta \left( \lambda \right) } \Vert  e^{\sqrt{\lambda } \cdot}\Vert_{L^p}=\dfrac{\sqrt{\lambda } }{ \lambda (e^{\sqrt{\lambda } }- e^{-\sqrt{\lambda } })  }
\left( \int_0^1 e^{p\sqrt{\lambda } x} dx\right)^{1/p}  
$$
and 
$$
\lim_{\lambda  \rightarrow +\infty} \gamma_{\lambda } \lambda^{\frac{p+1}{2p}}=(1/p)^{1/p}>0,
$$
and the result follows. 
\end{proof}

By using Lemmas \ref{LE6.1}-\ref{LE6.3}, we deduce that Assumption 3.4 in Ducrot, Magal and Prevost \cite{DMP} is satisfied. Therefore by applying Theorem 3.11 in \cite{DMP} we obtain the following lemma. 
\begin{lemma} \label{LE6.4}
The linear operator $\mathcal{A}$ satisfies Assumption \ref{ASS1.1} and Assumption \ref{ASS1.2}.  
\end{lemma}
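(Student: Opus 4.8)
The plan is to read off both assumptions from the explicit structure of $\mathcal{A}$ recorded in Lemmas \ref{LE6.1}--\ref{LE6.3}, feeding the estimates into the almost sectorial framework of Ducrot, Magal and Prevost \cite{DMP}. For Assumption \ref{ASS1.1} I would argue directly: by Lemma \ref{LE6.1} the part $\mathcal{A}_0$ of $\mathcal{A}$ in $X_0$ generates an analytic (hence strongly continuous) semigroup on $X_0$, so $\mathcal{A}_0$ is in particular a Hille-Yosida operator on $X_0$, i.e. there exist $\omega\in\mathbb{R}$ and $M\geq1$ with $\|(\lambda I-\mathcal{A}_0)^{-k}\|_{\mathcal{L}(X_0)}\leq M(\lambda-\omega)^{-k}$ for all $\lambda>\omega$ and $k\geq1$. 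By Lemma \ref{LE6.2} the set $\rho(\mathcal{A})$ contains the half plane $\Omega_{\omega_{\mathcal{A}}}$, hence is nonempty. By the equivalence recalled just after the statement of Assumption \ref{ASS1.1} (Magal and Ruan \cite[Lemma 2.1 and Lemma 2.2]{Magal-Ruan2009b}), the conjunction ``$\rho(\mathcal{A})\neq\varnothing$ and $\mathcal{A}_0$ is Hille-Yosida on $X_0$'' is exactly Assumption \ref{ASS1.1} for $\mathcal{A}$, which settles the first half.

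For Assumption \ref{ASS1.2} the situation is genuinely harder, because $\mathcal{A}$ is \emph{not} a Hille-Yosida operator: Lemma \ref{LE6.3} shows $\|(\lambda I-\mathcal{A})^{-1}\|_{\mathcal{L}(X)}$ decays only like $\lambda^{-1/p^{*}}$ as $\lambda\to+\infty$ in $\mathbb{R}$, with $1/p^{*}=(p+1)/(2p)\in(1/2,1]$, so the Kellermann--Hieber construction of the mild solution is unavailable. Instead the plan is to verify Assumption 3.4 of \cite{DMP}, which for the present operator reduces to three points: $\mathcal{A}_0$ generates an analytic semigroup on $X_0$ (Lemma \ref{LE6.1}); $\rho(\mathcal{A})$ contains a sector $\Omega_{\omega_{\mathcal{A}}}$ (Lemma \ref{LE6.2}); and an almost sectorial resolvent bound $\|(\lambda I-\mathcal{A})^{-1}\|_{\mathcal{L}(X)}\leq C\,|\lambda|^{-1/p^{*}}$ holds on that sector. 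Granting these, Theorem 3.11 of \cite{DMP} yields precisely the content of Assumption \ref{ASS1.2}: for each $\tau>0$ and each $f\in C([0,\tau],X)$ the map $t\mapsto(S_{\mathcal{A}}\ast f)(t)$ is continuously differentiable into $X$, its derivative $u_f$ belongs to $C([0,\tau],X_0)$ and is the integrated solution of the non homogeneous problem, and $\|u_f(t)\|\leq\delta(t)\sup_{s\in[0,t]}\|f(s)\|$ for some non decreasing $\delta$ with $\delta(t)\to0$ as $t\to0^{+}$.

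The step I expect to be the main obstacle is the last of these three points: Lemma \ref{LE6.3} records the resolvent estimate only along the positive real axis, while \cite{DMP} requires it on a full sector of $\mathbb{C}$. To close this gap I would rerun the computation of Lemma \ref{LE6.2} with $\lambda=\mu^{2}$ and $\mu=\sqrt{\lambda}$ allowed to range over a sector about $\mathbb{R}_{+}$: bound $|\Delta(\lambda)|=|\mu|^{2}\,|e^{\mu}-e^{-\mu}|$ from below by $c\,|\mu|^{2}e^{\mathcal{R}e\,\mu}$ for $|\mu|$ large; bound the two rank-one boundary kernels $\frac{\Delta_i(\cdot)}{\mu\,\Delta(\lambda)}$ in $L^{p}(0,1)$ by $C\,|\mu|^{-1}(\mathcal{R}e\,\mu)^{-1/p}\asymp C\,|\lambda|^{-1/p^{*}}$; and bound the kernels acting on $f$ (the two boundary integrals by H\"older, and the Green-function part $\tfrac{1}{2\mu}\int_0^1 e^{-\mu|x-s|}f(s)\,ds$ by Young's inequality) by $C\,|\mu|^{-1}(\mathcal{R}e\,\mu)^{-1}\|f\|_{L^{p}}\asymp C\,|\lambda|^{-1}\|f\|_{L^{p}}$, which decays strictly faster. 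Summing these contributions gives $\|(\lambda I-\mathcal{A})^{-1}\|_{\mathcal{L}(X)}=O(|\lambda|^{-1/p^{*}})$ uniformly on the sector, so the almost sectorial bound of \cite{DMP} holds with exponent $\gamma=1/p^{*}\in(1/2,1]$; together with Lemmas \ref{LE6.1}--\ref{LE6.2} this verifies Assumption 3.4 of \cite{DMP}, and the lemma follows.
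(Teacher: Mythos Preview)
Your proposal is correct and follows essentially the same route as the paper, which simply records (in the sentence preceding the lemma) that Lemmas \ref{LE6.1}--\ref{LE6.3} verify Assumption 3.4 of \cite{DMP} and then invokes Theorem 3.11 there. Your additional care in extending the resolvent estimate of Lemma \ref{LE6.3} from the positive real axis to a full sector, via the explicit formula of Lemma \ref{LE6.2}, fills in a detail the paper leaves implicit.
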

\begin{remark}\label{REM6.5} Since $\rho(\mathcal{A}) \neq \emptyset$, one can prove that $\sigma(\mathcal{A}_0)=\sigma(\mathcal{A})$ (see \cite{Magal-Ruan2009b}).
\end{remark}

\noindent \textbf{Abstract Cauchy problem:} By identifying $u(t,.)$ and $v(t):=\left( 
\begin{array}{c}
0_{\mathbb{R}^{2}} \\ 
u(t,.)%
\end{array}%
\right) $ we can rewrite equation (\ref{6.1}) as the following abstract
Cauchy problem  for each initial time $t_0 \in \mathbb{R}$ 
\begin{equation}
\dfrac{dv(t)}{dt}=\mathcal{A} v(t)+\alpha v(t) +\mathcal{B}(t)v(t)+f(t),\text{ for }t\geq t_0\text{ and }%
v(t_0)=\left( 
\begin{array}{c}
0_{\mathbb{R}^{2}} \\ 
\varphi 
\end{array}%
\right), \label{6.2}
\end{equation}
where
\begin{equation*}
\mathcal{B}(t)\left( 
\begin{array}{c}
0_{\mathbb{R}} \\ 
\varphi 
\end{array}%
\right) :=\left( 
\begin{array}{c}
\int_{I}\beta _{0}(t,x)\varphi (x)dx \\ 
\int_{I}\beta _{1}(t,x)\varphi (x)dx \\ 
0_{L^{p}}%
\end{array}%
\right) 
\text{ and }
f(t):=\left( 
\begin{array}{c}
h_{0}(t) \\ 
h_{1}(t) \\ 
g(t,.)%
\end{array}%
\right) .
\end{equation*}
By using Lemma \ref{LE6.1} we know that $(\mathcal{A}+\alpha I)_0$ the part of $(\mathcal{A}+\alpha I)$ is the infinitesimal generator of $\left\lbrace T_{(\mathcal{A}+\alpha I)_0}(t) \right\rbrace_{t \geq 0}$ an analytic semigroup of bounded linear on $X_0$. By using Lemma \ref{LE6.4} we deduce that $(\mathcal{A}+\alpha I)$ generates an integrated semigroup $\left\lbrace S_{(\mathcal{A}+\alpha I)}(t) \right\rbrace_{t \geq 0}$. 
Consider for each initial time $t_0 \in \mathbb{R}$  the parabolic equation 
\begin{equation}
\left\{ 
\begin{array}{l}
\dfrac{\partial u(t,x)}{\partial t}=\dfrac{\partial
^{2}u(t,x)}{\partial x^{2}}+\alpha u(t,x),\text{ for }t\geq t_0
\text{ and }x\in (0,1), \\ 
-\dfrac{\partial u(t,0)}{\partial x}=\int_{I}\beta
_{0}(t,x)\varphi (x)dx \\ 
+\dfrac{\partial u(t,1)}{\partial x}=\int_{I}\beta
_{1}(t,x)\varphi (x)dx \\ 
u(t_0,.)=\varphi \in L^{p}(I,\mathbb{R}),%
\end{array}%
\right.   \label{6.3}
\end{equation}
this equation corresponds to the abstract Cauchy problem for each initial time $t_0 \in \mathbb{R}$ 
\begin{equation}
\dfrac{dv(t)}{dt}=(\mathcal{A}+\alpha I) v(t) +\mathcal{B}(t)v(t),\text{ for }t\geq t_0\text{ and }%
v(t_0)=\left( 
\begin{array}{c}
0_{\mathbb{R}^{2}} \\ 
\varphi 
\end{array}%
\right) .  \label{6.4}
\end{equation}

\noindent \textbf{Variation of constants formula :} By using Proposition \ref{PROP1.5} we obtain the following result. 
\begin{proposition} \label{PROP6.6} The Cauchy problem (\ref{6.4}) generates a
unique evolution family \\
$\{U_\mathcal{B}(t,s) \}_{(t,s)\in \Delta}\subset \mathcal{L}(X_0)$. Moreover $U_\mathcal{B}(\cdot,t_0)x_0\in C([t_0,+\infty),X_0)$ is the unique solution of the fixed point problem 
\begin{equation}  \label{6.5}
U_\mathcal{B}(t,t_0)x_0=T_{(\mathcal{A}+\alpha I)_0}(t-t_0)x_0+\dfrac{d}{dt}\int_{t_0}^t
S_{(\mathcal{A}+\alpha I)}(t-s)\mathcal{B}(s)U_\mathcal{B}(s,t_0)x_0ds, \ t\geq t_0.
\end{equation}
If we assume in addition that 
\begin{equation*}
\sup_{t\in \mathbb{R}} \Vert \beta _{0}(t,.) \Vert_{L^{q}}+\Vert \beta _{1}(t,.) \Vert_{L^{q}}<+\infty
\end{equation*}
then the evolution family $\{U_\mathcal{B}(t,s) \}_{(t,s)\in \Delta}$  is exponentially
bounded.
\end{proposition}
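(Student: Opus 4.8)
The plan is to obtain Proposition~\ref{PROP6.6} as a direct application of Proposition~\ref{PROP1.5} to the non--densely defined linear part $A:=\mathcal{A}+\alpha I$ and the perturbation family $B(t):=\mathcal{B}(t)$; the only work is to verify that Assumptions~\ref{ASS1.1}, \ref{ASS1.2} and \ref{ASS1.3} hold in this situation. For Assumption~\ref{ASS1.1}: by Lemma~\ref{LE6.4} the operator $\mathcal{A}$ satisfies Assumption~\ref{ASS1.1}, hence by Lemma~\ref{LE2.1} we have $\rho(\mathcal{A})\neq\varnothing$ and $\mathcal{A}_{0}$ Hille--Yosida on $X_{0}$. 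Since $D(\mathcal{A}+\alpha I)=D(\mathcal{A})\subset X_{0}=\overline{D(\mathcal{A})}=\overline{D(\mathcal{A}+\alpha I)}$, one has $(\mathcal{A}+\alpha I)_{0}=\mathcal{A}_{0}+\alpha I$ and $\rho(\mathcal{A}+\alpha I)=\alpha+\rho(\mathcal{A})\neq\varnothing$, and $\mathcal{A}_{0}+\alpha I$ is again Hille--Yosida on $X_{0}$ (its type merely being shifted by $\alpha$; indeed, as already noted, it generates an analytic semigroup on $X_{0}$). Invoking the equivalence in Lemma~\ref{LE2.1} once more, $\mathcal{A}+\alpha I$ satisfies Assumption~\ref{ASS1.1}.

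For Assumption~\ref{ASS1.2}: Lemma~\ref{LE6.4} gives it for $\mathcal{A}$, and $\alpha I\in\mathcal{L}(X_{0},X)$ is a bounded autonomous perturbation, so the continuous differentiability of $t\mapsto(S_{\mathcal{A}+\alpha I}\ast f)(t)$ together with the estimate~(\ref{1.4}) is inherited from the corresponding properties of $\mathcal{A}$ via the bounded--perturbation theory for integrated solutions of \cite{Magal-Ruan2007} that already underlies Proposition~\ref{PROP1.5}. (Alternatively, one may keep $A=\mathcal{A}$ and take $B(t)=\alpha I+\mathcal{B}(t)$ as the perturbation; this produces the same evolution family.)

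For Assumption~\ref{ASS1.3}: for each $t\in\mathbb{R}$ the operator $\mathcal{B}(t)$ maps $X_{0}$ into $X$, and H\"older's inequality gives
\begin{equation*}
\left\Vert\mathcal{B}(t)\right\Vert_{\mathcal{L}(X_{0},X)}\leq\left\Vert\beta_{0}(t,\cdot)\right\Vert_{L^{q}}+\left\Vert\beta_{1}(t,\cdot)\right\Vert_{L^{q}}=:b(t),
\end{equation*}
with $b\in C(\mathbb{R},\mathbb{R}_{+})$ since $\beta_{0},\beta_{1}\in C(\mathbb{R},L^{q}(I,\mathbb{R}))$; likewise, for fixed $\varphi\in L^{p}(I,\mathbb{R})$ the maps $t\mapsto\int_{I}\beta_{i}(t,x)\varphi(x)\,dx$, $i=0,1$, are continuous by H\"older's inequality, so $t\mapsto\mathcal{B}(t)$ is strongly continuous. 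Hence Assumption~\ref{ASS1.3} holds with this $b$.

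With the three assumptions in force, Proposition~\ref{PROP1.5} yields at once the existence and uniqueness of the evolution family $\{U_{\mathcal{B}}(t,s)\}_{(t,s)\in\Delta}\subset\mathcal{L}(X_{0})$, the fixed point characterisation~(\ref{6.5}) (which is precisely~(\ref{1.8}) for the present choice of $A$ and $B$), and, under the additional hypothesis $\sup_{t\in\mathbb{R}}\big(\Vert\beta_{0}(t,\cdot)\Vert_{L^{q}}+\Vert\beta_{1}(t,\cdot)\Vert_{L^{q}}\big)=\sup_{t\in\mathbb{R}}b(t)<+\infty$, the exponential boundedness of the family. The only delicate point is the persistence of Assumption~\ref{ASS1.2} under the shift by $\alpha I$, but this is covered by the bounded--perturbation machinery already used in the paper, so the argument is essentially routine.
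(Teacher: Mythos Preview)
Your proof is correct and follows exactly the paper's approach: the paper simply states ``By using Proposition~\ref{PROP1.5} we obtain the following result'' without further detail, so your verification of Assumptions~\ref{ASS1.1}--\ref{ASS1.3} for $A=\mathcal{A}+\alpha I$ and $B(t)=\mathcal{B}(t)$ just fills in what the paper leaves implicit. Your alternative of taking $A=\mathcal{A}$ and $B(t)=\alpha I+\mathcal{B}(t)$ is also perfectly valid and sidesteps the (minor) issue of transferring Assumption~\ref{ASS1.2} to the shifted operator.
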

By using Theorem \ref{TH1.6} we obtain. 
\begin{theorem}
\label{TH6.7} For each $t_0\in \mathbb{R}$, each $x_0\in X_0$ and each $%
f\in C([t_0,+\infty],X)$ the unique integrated solution $v_f\in
C([t_0,+\infty],X_0)$ of (\ref{6.2}) is given for each $t\geq t_0$ by 
\begin{equation} \label{6.6}
v_f(t)=U_{\mathcal{B}}(t,t_0)x_0+\underset{\lambda \rightarrow +\infty}{\lim}
\int_{t_0}^t U_\mathcal{B}(t,s)\lambda R_\lambda(\mathcal{A}+\alpha I)f(s)ds
\end{equation}
where the limit exists in $X_0$. Moreover the convergence in (\ref{6.6}) is
uniform with respect to $t,t_0 \in I$ for each compact interval $I \subset 
\mathbb{R}$.
\end{theorem}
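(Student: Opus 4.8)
The plan is to obtain Theorem \ref{TH6.7} as a direct specialization of the abstract variation of constants formula, Theorem \ref{TH1.6}, applied with the choices $A:=\mathcal{A}+\alpha I$, $B(t):=\mathcal{B}(t)$ and $f$ as defined above, so that (\ref{6.2}) becomes verbatim the abstract Cauchy problem (\ref{1.1}). The whole argument then reduces to checking that this triple $(\mathcal{A}+\alpha I,\mathcal{B},f)$ satisfies Assumptions \ref{ASS1.1}, \ref{ASS1.2} and \ref{ASS1.3}. Once this is done, Proposition \ref{PROP1.5} produces an evolution family which, by uniqueness of the solution of the fixed point problem (\ref{1.8}), coincides with $\{U_{\mathcal{B}}(t,s)\}_{(t,s)\in\Delta}$ of Proposition \ref{PROP6.6} — indeed equation (\ref{6.5}) is exactly (\ref{1.8}) for this choice of $A$ and $B$ — and Theorem \ref{TH1.6} then yields formula (\ref{6.6}) together with the asserted uniform convergence of the limit on compact intervals.

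For Assumptions \ref{ASS1.1} and \ref{ASS1.2}: Lemma \ref{LE6.4} gives them for $\mathcal{A}$, and passing to $\mathcal{A}+\alpha I$ is harmless. Since $R_\lambda(\mathcal{A}+\alpha I)=R_{\lambda-\alpha}(\mathcal{A})$ for $\lambda$ large, Assumption \ref{ASS1.1}-i) holds with $\omega$ replaced by $\omega+\alpha$ and the same constant $M$, and \ref{ASS1.1}-ii) follows by the change of variable $\lambda\mapsto\lambda-\alpha$. For Assumption \ref{ASS1.2} one repeats the verification of Assumption 3.4 in \cite{DMP} for $\mathcal{A}+\alpha I$: the estimate of Lemma \ref{LE6.3} is unchanged up to the shift $\lambda\mapsto\lambda-\alpha$, and $(\mathcal{A}+\alpha I)_0=\mathcal{A}_0+\alpha I$ generates $\{e^{\alpha t}T_{\mathcal{A}_0}(t)\}_{t\ge0}$, which is again analytic by Lemma \ref{LE6.1}; hence Theorem 3.11 in \cite{DMP} applies to $\mathcal{A}+\alpha I$ as well. (Alternatively, $\alpha I$ is a bounded operator mapping $X_0$ into $X_0$, and the contraction argument behind Proposition \ref{PROP1.5}/Lemma \ref{LE2.13}, based on Proposition \ref{PROP2.16}, directly produces a mild solution of $u'=(\mathcal{A}+\alpha I)u+f$ on $[0,\tau]$ with the norm bound \eqref{1.4} for a new non-decreasing $\delta$ tending to $0$ at $0^+$.) I expect this inheritance of Assumption \ref{ASS1.2} under the scalar shift to be the only genuine — though still routine — point of the proof.

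For Assumption \ref{ASS1.3}: since $\beta_0,\beta_1\in C(\mathbb{R},L^q(I,\mathbb{R}))$ with $\tfrac1p+\tfrac1q=1$, Hölder's inequality gives, for every $t\in\mathbb{R}$,
\begin{equation*}
\Vert\mathcal{B}(t)\Vert_{\mathcal{L}(X_0,X)}\le\Vert\beta_0(t,\cdot)\Vert_{L^q}+\Vert\beta_1(t,\cdot)\Vert_{L^q}=:b(t),
\end{equation*}
and $b\in C(\mathbb{R},\mathbb{R}_+)$ because $\beta_0,\beta_1$ are continuous into $L^q(I,\mathbb{R})$; applying the same inequality to $\beta_j(t,\cdot)-\beta_j(s,\cdot)$ shows that $t\mapsto\mathcal{B}(t)\psi$ is continuous from $\mathbb{R}$ into $X$ for every $\psi\in X_0$, i.e. $\{\mathcal{B}(t)\}_{t\in\mathbb{R}}$ is strongly continuous. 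Finally $f$ as defined lies in $C(\mathbb{R},X)$ since $h_0,h_1\in C(\mathbb{R},\mathbb{R})$ and $g\in C(\mathbb{R},L^p(I,\mathbb{R}))$, which matches the hypothesis of Theorem \ref{TH1.6}.

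With Assumptions \ref{ASS1.1}--\ref{ASS1.3} verified for $(\mathcal{A}+\alpha I,\mathcal{B},f)$, Theorem \ref{TH1.6} applies and gives, for each $t_0\in\mathbb{R}$, each $x_0\in X_0$ and each $f\in C([t_0,+\infty],X)$, the unique integrated solution $v_f\in C([t_0,+\infty],X_0)$ of (\ref{6.2}) in the form (\ref{6.6}), with the limit existing in $X_0$ and the convergence uniform with respect to $t,t_0$ in any compact interval of $\mathbb{R}$; this is precisely the statement of Theorem \ref{TH6.7}.
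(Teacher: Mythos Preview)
Your proposal is correct and follows exactly the paper's approach: the paper states Theorem \ref{TH6.7} with the single line ``By using Theorem \ref{TH1.6} we obtain,'' having already set up the abstract reformulation (\ref{6.2}) with $A=\mathcal{A}+\alpha I$ and $B(t)=\mathcal{B}(t)$, noted that $(\mathcal{A}+\alpha I)$ inherits Assumptions \ref{ASS1.1} and \ref{ASS1.2} from Lemma \ref{LE6.4}, and derived Proposition \ref{PROP6.6} from Proposition \ref{PROP1.5}. Your write-up is simply a more explicit spelling-out of the assumption checks that the paper leaves implicit.
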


\noindent \textbf{Exponential dichotomy result :} By using Theorem \ref{TH5.13} we obtain de following result
\begin{theorem} \label{TH6.8}
Assume  that 
\begin{equation*}
\sup_{t\in \mathbb{R}} \Vert \beta _{0}(t,.) \Vert_{L^{q}}+\Vert \beta _{1}(t,.) \Vert_{L^{q}}<+\infty
\end{equation*}
Then the following assertions are equivalent

\begin{itemize}
\item[i)] The evolution family $\{ U_\mathcal{B}(t,s)\}_{(t,s)\in \Delta}$ has an
exponential dichotomy.

\item[ii)] For each $f\in BC(\mathbb{R},X)$, there exists a unique
integrated solution $u\in BC(\mathbb{R},X_0)$ of (\ref{6.2}).
\end{itemize}
\end{theorem}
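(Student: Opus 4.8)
The plan is to deduce Theorem~\ref{TH6.8} from Theorem~\ref{TH5.13} applied to the abstract Cauchy problem~(\ref{6.2}), read as an instance of~(\ref{1.1}) with $Z=X_0$, $A:=\mathcal{A}$, $B(t):=\alpha I+\mathcal{B}(t)$ and $f(t):=(h_0(t),h_1(t),g(t,\cdot))\in X$. Absorbing the scalar term $\alpha I$ into the perturbation rather than into the linear part avoids rechecking Assumptions~\ref{ASS1.1}--\ref{ASS1.2} for $\mathcal{A}+\alpha I$; with this reading the homogeneous problem~(\ref{6.4}) is exactly~(\ref{1.7}), and the evolution family it generates is the $\{U_{\mathcal{B}}(t,s)\}_{(t,s)\in\Delta}$ of Proposition~\ref{PROP6.6}. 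So the only work is to verify the hypotheses of Theorem~\ref{TH5.13}, after which the equivalence (i)$\Leftrightarrow$(ii) transfers verbatim.

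Assumption~\ref{ASS1.1} and Assumption~\ref{ASS1.2} for $\mathcal{A}$ are provided by Lemma~\ref{LE6.4}. For Assumption~\ref{ASS1.3}, note first that $\alpha I\in\mathcal{L}(X_0)\subset\mathcal{L}(X_0,X)$, and for fixed $\varphi\in L^p(I,\mathbb{R})$ H\"{o}lder's inequality yields
\begin{equation*}
\Big\|\mathcal{B}(t)\begin{pmatrix}0_{\mathbb{R}^2}\\\varphi\end{pmatrix}\Big\|_X=\Big|\int_I\beta_0(t,x)\varphi(x)\,dx\Big|+\Big|\int_I\beta_1(t,x)\varphi(x)\,dx\Big|\le\big(\|\beta_0(t,\cdot)\|_{L^q}+\|\beta_1(t,\cdot)\|_{L^q}\big)\|\varphi\|_{L^p},
\end{equation*}
so $\alpha I+\mathcal{B}(t)\in\mathcal{L}(X_0,X)$ with $\|\alpha I+\mathcal{B}(t)\|_{\mathcal{L}(X_0,X)}\le b(t):=|\alpha|+\|\beta_0(t,\cdot)\|_{L^q}+\|\beta_1(t,\cdot)\|_{L^q}$. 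Since $t\mapsto\beta_i(t,\cdot)$ is continuous into $L^q(I,\mathbb{R})$, the function $b$ is continuous on $\mathbb{R}$, and applying the displayed estimate to $\beta_i(t,\cdot)-\beta_i(t',\cdot)$ shows that $t\mapsto(\alpha I+\mathcal{B}(t))x$ is continuous for every $x\in X_0$; hence Assumption~\ref{ASS1.3} holds. Moreover the standing hypothesis $\sup_{t\in\mathbb{R}}\left(\|\beta_0(t,\cdot)\|_{L^q}+\|\beta_1(t,\cdot)\|_{L^q}\right)<+\infty$ is precisely $\sup_{t\in\mathbb{R}}b(t)<+\infty$.

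With these hypotheses verified, Theorem~\ref{TH5.13} applies with $Z=X_0$ and yields the equivalence of assertions (i) and (ii), once one identifies an integrated solution of~(\ref{6.2}) in the sense stated in Theorem~\ref{TH6.8} with an integrated solution of~(\ref{1.1}) in the sense of Definition~\ref{DEF1.9} for the data above. This identification is immediate from the abstract reformulation preceding Proposition~\ref{PROP6.6}, the nonlocal boundary terms having been encoded in the $\mathbb{R}^2$-component of $X$ and in the range of $\mathcal{A}$. I do not expect a genuine obstacle here: the argument reduces to Lemma~\ref{LE6.4}, the one-line H\"{o}lder estimate above, and the bookkeeping of the abstract reformulation; the only mildly delicate point is confirming that the two notions of integrated solution coincide, which is exactly what the construction leading to~(\ref{6.2}) guarantees.
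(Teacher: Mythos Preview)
Your proposal is correct and follows the same route as the paper, which simply records that Theorem~\ref{TH6.8} is obtained ``by using Theorem~\ref{TH5.13}''. The only cosmetic difference is the splitting: the paper takes $A:=\mathcal{A}+\alpha I$ and $B(t):=\mathcal{B}(t)$ (as is visible from the form of~(\ref{6.5}) in Proposition~\ref{PROP6.6}), whereas you absorb $\alpha I$ into the perturbation; since $\alpha I\in\mathcal{L}(X)$, both choices give the same integrated solutions and the same evolution family, so the distinction is immaterial.
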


\begin{assumption} \label{ASS6.9}
Assume that $\alpha>0$ and $\alpha \neq -  (\pi k)^2, \forall k \in \mathbb{N}$. 
\end{assumption}
Then the spectrum of $\mathcal{A}+\alpha I$ do not contain any purely imaginary eigenvalue, and by using Lemma  \ref{LE6.1} and Remark \ref{REM6.5} we deduce that 

$$
\sigma(\mathcal{A}+\alpha I)=\sigma(\mathcal{A}_0+\alpha I)=\left\lbrace -  (\pi k)^2+\alpha:k \in \mathbb{N} \right\rbrace 
$$
therefore 
$$
0 \notin \sigma(\mathcal{A}_0+\alpha I).
$$
Then $U(t,s):=T_{\mathcal{A}+\alpha I}(t-s)$ has an exponential dichotomy and we can apply Theorem \ref{TH5.14} with $A+B(t):=\mathcal{A}+\alpha I$ and $C(t):=\mathcal{\mathcal{B}}(t)$. 
\begin{theorem}  \label{TH6.10}
Let Assumption \ref{ASS6.9} be satisfied. There exists $\varepsilon>0$ such that
\begin{equation*}
\sup_{t\in \mathbb{R}} \Vert \beta _{0}(t,.) \Vert_{L^{q}}+\Vert \beta _{1}(t,.) \Vert_{L^{q}}<\varepsilon
\end{equation*}
implies that the evolution family $\{U_\mathcal{B}(t,s) \}_{(t,s)\in \Delta}\subset \mathcal{L}(X_0)$ has an exponential dichotomy.
\end{theorem}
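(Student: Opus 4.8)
The plan is to recognize that the Cauchy problem (\ref{6.4}) is nothing but a bounded perturbation, by the family $\mathcal{B}(t)$, of the \emph{autonomous} problem governed by $\mathcal{A}+\alpha I$, and then to invoke the persistence result Theorem \ref{TH5.14}. Concretely, I would apply Theorem \ref{TH5.14} with the linear operator $A:=\mathcal{A}+\alpha I$, with the unperturbed family $B(t):=0$ (so that $b\equiv 0$, Assumption \ref{ASS1.3} holds trivially, and $\sup_{t}b(t)<+\infty$), and with the perturbation $C(t):=\mathcal{B}(t)$. The equation associated to $A+C(t)$ is then precisely (\ref{6.4}), whose evolution family is $\{U_{\mathcal{B}}(t,s)\}_{(t,s)\in\Delta}$ of Proposition \ref{PROP6.6}.

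The first (routine) step is to check the structural hypotheses for $A=\mathcal{A}+\alpha I$ and $C(t)=\mathcal{B}(t)$. Since $\alpha I$ is bounded and $(\lambda I-(\mathcal{A}+\alpha I))^{-1}=((\lambda-\alpha)I-\mathcal{A})^{-1}$, Assumption \ref{ASS1.1} for $\mathcal{A}+\alpha I$ follows from Lemma \ref{LE6.4} after the shift $\omega\mapsto\omega+\alpha$, and Assumption \ref{ASS1.2} follows as well since $(\mathcal{A}+\alpha I)_0=\mathcal{A}_0+\alpha I$ still generates an analytic semigroup on $X_0$ and $\mathcal{A}+\alpha I$ generates the integrated semigroup $\{S_{(\mathcal{A}+\alpha I)}(t)\}_{t\ge0}$ (equivalently one may quote Lemma \ref{LE6.4} directly for $\mathcal{A}+\alpha I$). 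For $C(t)=\mathcal{B}(t)$: strong continuity of $t\mapsto\mathcal{B}(t)$ into $\mathcal{L}(X_0,X)$ comes from $\beta_0,\beta_1\in C(\mathbb{R},L^q(I,\mathbb{R}))$ and Hölder's inequality, and $\|\mathcal{B}(t)\|_{\mathcal{L}(X_0,X)}\le\|\beta_0(t,\cdot)\|_{L^q}+\|\beta_1(t,\cdot)\|_{L^q}$, which is continuous in $t$.

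The genuinely delicate step is verifying Assumption \ref{ASS5.6} for the autonomous family $U(t,s):=T_{(\mathcal{A}+\alpha I)_0}(t-s)$. By Lemma \ref{LE6.1}, $\mathcal{A}_0+\alpha I$ generates an analytic semigroup with compact resolvent, so the spectral mapping theorem applies and the growth and decay rates of $T_{(\mathcal{A}+\alpha I)_0}$ are governed by $\sigma(\mathcal{A}_0+\alpha I)$. By Lemma \ref{LE6.1} and Remark \ref{REM6.5} we have $\sigma(\mathcal{A}+\alpha I)=\sigma(\mathcal{A}_0+\alpha I)=\{-(\pi k)^2+\alpha:k\in\mathbb{N}\}$, a discrete subset of the real axis, and Assumption \ref{ASS6.9} guarantees $0\notin\sigma(\mathcal{A}_0+\alpha I)$, hence $\sigma(\mathcal{A}_0+\alpha I)\cap i\mathbb{R}=\varnothing$. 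Splitting the spectrum into the finite unstable part $\{-(\pi k)^2+\alpha>0\}$ and the stable part $\{-(\pi k)^2+\alpha<0\}$, let $\Pi^-$ be the (finite-rank) Riesz spectral projection onto the unstable eigenspace spanned by the corresponding $\psi_k$'s and $\Pi^+:=I-\Pi^-$; then $T_{(\mathcal{A}+\alpha I)_0}$ restricted to $\Pi^-(X_0)$ extends to a group decaying backward in time, while on $\Pi^+(X_0)$ it decays forward in time, for any exponent $\beta<\mathrm{dist}(\sigma(\mathcal{A}_0+\alpha I),i\mathbb{R})$ and a suitable $\kappa\ge1$ coming from the analytic-semigroup estimates. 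As the problem is autonomous the projectors are constant in $t$, hence trivially strongly continuous, and $U(t,s)\Pi^-(s)$ is invertible from $\Pi^-(s)(X_0)$ onto $\Pi^-(t)(X_0)$; so $\{U(t,s)\}_{(t,s)\in\Delta}$ has an exponential dichotomy in the sense of Definition \ref{DEF1.8}, i.e.\ Assumption \ref{ASS5.6} holds.

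With these three steps in place, Theorem \ref{TH5.14} supplies $\varepsilon>0$ such that every strongly continuous family $\{C(t)\}_{t\in\mathbb{R}}\subset\mathcal{L}(X_0,X)$ with $\sup_{t\in\mathbb{R}}\|0-C(t)\|_{\mathcal{L}(X_0,X)}=\sup_{t\in\mathbb{R}}\|C(t)\|_{\mathcal{L}(X_0,X)}\le\varepsilon$ generates, via $\frac{du}{dt}=((\mathcal{A}+\alpha I)+C(t))u$, an evolution family with an exponential dichotomy. Taking $C(t)=\mathcal{B}(t)$ and using $\sup_{t}\|\mathcal{B}(t)\|_{\mathcal{L}(X_0,X)}\le\sup_{t}\big(\|\beta_0(t,\cdot)\|_{L^q}+\|\beta_1(t,\cdot)\|_{L^q}\big)<\varepsilon$ together with the uniqueness of the evolution family from Proposition \ref{PROP6.6}, we conclude that $\{U_{\mathcal{B}}(t,s)\}_{(t,s)\in\Delta}\subset\mathcal{L}(X_0)$ has an exponential dichotomy. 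The main obstacle is the third step — the spectral-mapping argument converting $0\notin\sigma(\mathcal{A}_0+\alpha I)$ into hyperbolicity of $T_{(\mathcal{A}+\alpha I)_0}$ — together with the bookkeeping of applying Theorem \ref{TH5.14} with the unperturbed family taken to be $B\equiv0$ and $A=\mathcal{A}+\alpha I$, rather than with $B(t)=\alpha I$ and $A=\mathcal{A}$.
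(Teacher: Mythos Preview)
Your proposal is correct and follows exactly the route the paper takes: the paragraph preceding Theorem \ref{TH6.10} already records that $\sigma(\mathcal{A}_0+\alpha I)=\{-(\pi k)^2+\alpha:k\in\mathbb{N}\}$ contains no point of $i\mathbb{R}$ under Assumption \ref{ASS6.9}, so that $U(t,s):=T_{(\mathcal{A}+\alpha I)_0}(t-s)$ has an exponential dichotomy, and then states that Theorem \ref{TH5.14} is applied with $A+B(t):=\mathcal{A}+\alpha I$ and $C(t):=\mathcal{B}(t)$. Your write-up simply makes explicit the spectral/Riesz-projection argument for hyperbolicity of the analytic semigroup and the operator-norm bound $\Vert\mathcal{B}(t)\Vert_{\mathcal{L}(X_0,X)}\le\Vert\beta_0(t,\cdot)\Vert_{L^q}+\Vert\beta_1(t,\cdot)\Vert_{L^q}$, which the paper leaves implicit.
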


\end{document}